\DeclareMathOperator{\N}{\mathbf{N}}
\DeclareMathOperator{\Z}{\mathbf{Z}}
\DeclareMathOperator{\R}{\mathbf{R}}
\DeclareMathOperator{\dd}{\mathbf{d}}
\DeclareMathOperator{\Hom}{\textup{Hom}}
\DeclareMathOperator{\GL}{\textup{GL}}
\DeclareMathOperator{\Rep}{\textup{Rep}}
\DeclareMathOperator{\Ext}{\textup{Ext}}
\DeclareMathOperator{\C}{\mathbf{C}}
\DeclareMathOperator{\Q}{\mathbf{Q}}
\DeclareMathOperator{\F}{\mathbf{F}}
\DeclareMathOperator{\HH}{\mathbf{H}}
\DeclareMathOperator{\PP}{\mathbf{P}}
\DeclareMathOperator{\nil}{\textup{nil}}
\DeclareMathOperator{\U}{\mathbf{U}}
\DeclareMathOperator{\cusp}{\textup{cusp}}
\DeclareMathOperator{\CC}{\mathcal{C}}
\DeclareMathOperator{\ind}{\textup{ind}}
\DeclareMathOperator{\M}{\mathcal{M}}
\DeclareMathOperator{\Exp}{\textup{Exp}}
\DeclareMathOperator{\im}{\textup{im}}
\DeclareMathOperator{\supp}{\textup{supp}}
\DeclareMathOperator{\indec}{\textup{indec}}
\DeclareMathOperator{\End}{\textup{End}}
\DeclareMathOperator{\OO}{\mathcal{O}}
\DeclareMathOperator{\B}{\mathcal{B}}
\DeclareMathOperator{\id}{\textup{id}}
\DeclareMathOperator{\inv}{\textup{inv}}
\DeclareMathOperator{\Ob}{\textup{Ob}}
\DeclareMathOperator{\homm}{\textup{hom}}
\DeclareMathOperator{\ext}{\textup{ext}}
\DeclareMathOperator{\Fun}{\textup{Fun}}
\DeclareMathOperator{\PC}{\mathcal{P}}
\DeclareMathOperator{\IC}{\mathcal{I}}
\DeclareMathOperator{\RC}{\mathcal{R}}
\DeclareMathOperator{\op}{\textup{op}}
\DeclareMathOperator{\Aut}{\textup{Aut}}
\DeclareMathOperator{\AAAA}{\mathbf{A}}
\DeclareMathOperator{\Spec}{\textup{Spec}}
\newtheorem{theorem}{Theorem}[section]
\newtheorem{lemma}[theorem]{Lemma}
\newtheorem{proposition}[theorem]{Proposition}
\newtheorem{cor}[theorem]{Corollary}
\newtheorem{conj}[theorem]{Conjecture}
\theoremstyle{definition}
\newtheorem{definition}[theorem]{Definition}
\theoremstyle{remark}
\newtheorem{remark}[theorem]{Remark}
\numberwithin{equation}{section}
\newsavebox\myboxA
\newsavebox\myboxB
\newlength\mylenA
\newcommand*\xoverline[2][0.75]{%
    \sbox{\myboxA}{$\m@th#2$}%
    \setbox\myboxB\null
    \ht\myboxB=\ht\myboxA%
    \dp\myboxB=\dp\myboxA%
    \wd\myboxB=#1\wd\myboxA
    \sbox\myboxB{$\m@th\overline{\copy\myboxB}$}
    \setlength\mylenA{\the\wd\myboxA}
    \addtolength\mylenA{-\the\wd\myboxB}%
    \ifdim\wd\myboxB<\wd\myboxA%
       \rlap{\hskip 0.5\mylenA\usebox\myboxB}{\usebox\myboxA}%
    \else
        \hskip -0.5\mylenA\rlap{\usebox\myboxA}{\hskip 0.5\mylenA\usebox\myboxB}%
    \fi}
\begin{document}

\title{Isotropic cuspidal functions in the Hall algebra of a quiver}

\author{Lucien Hennecart}
\address{Laboratoire de Math\'ematiques d'Orsay, Univ. Paris-Sud, CNRS, Universit\'e Paris-Saclay, 91405 Orsay, France}

\email{lucien.hennecart@math.u-psud.fr}

\date{March 11, 2019}


\keywords{Hall algebras, Cuspidal functions}

\begin{abstract}
From the structure of the category of representations of an affine cycle-free quiver, we determine an explicit linear form on the space of regular cuspidal functions over a finite field: its kernel is exactly the space of cuspidal functions. Moreover, we show that any isotropic cuspidal dimension has an affine support. Brought together, this two results give an explicit description of isotropic cuspidal functions of any quiver. The main theorem together with an appropriate action of some permutation group on the Hall algebra provides a new elementary proof of two conjectures of Berenstein and Greenstein previously proved by Deng and Ruan. We also prove a statement giving non-obvious constraints on the support of the comultiplication of a cuspidal regular function allowing us to connect both mentioned conjectures of Berenstein and Greenstein. Our results imply the positivity conjecture of Bozec and Schiffmann concerning absolutely cuspidal polynomials in isotropic dimensions.
\end{abstract}

\maketitle

\tableofcontents

\section{Introduction}
Primitive elements of Hopf algebras or more generally of bialgebras are of primary importance in their study. A striking result is the Milnor-Moore theorem (\cite{MR0174052}) asserting that a graded connected cocommutative Hopf algebra with finite dimensional graded parts is isomorphic to the enveloping algebra of the Lie algebra of its primitive elements. Primitive elements of quantum groups (\cite{LusztigQuantum}) have no mystery: these are the Chevalley generators. The situation for generalized Borcherds-Kac-Moody algebras (\cite{Borcherds, MR1341758}) is analogous: these behave in fact very much like quantum groups associated to Kac-Moody algebras, although an infinite number of generators and imaginary simple roots are allowed. The Hall algebra of a quiver gives a way to construct quantum groups. Namely, given a quiver $Q$, one can consider the category $\Rep_Q(\F_q)$ of finite dimensional representations of $Q$ over some finite field $\F_q$. It can be used to built the so-called Hall algebra of $Q$ over $\F_q$, which is a Hopf algebra object in some braided monoidal category\footnote{More precisely, in the category of $\Z^I$-graded $\C$-vector spaces with finite dimensional graded components with braiding $X\otimes Y\rightarrow Y\otimes X$, $x\otimes y\mapsto \nu^{(x,y)}y\otimes x$ for any objects $X$ and $Y$ and homogeneous $x,y$, where $I$ is the set of vertices of $Q$ and $(-,-)$ is the symmetrized Euler form defined in Section \ref{2}.}. There is a natural subalgebra of the Hall algebra. It is the subalgebra generated by the classes of simple representations $[S_i]$ at each vertex $i$ of $Q$. This is the so-called composition algebra. By a theorem of Ringel (\cite{MR1062796}), it is isomorphic to the positive part of the quantum group $\U_{\nu}(\mathfrak{g}_A)$ specialized at $\nu=q^{1/2}$. The work of Sevenhant and Van den Bergh (\cite{SevenhantVdB}) identifies the whole Hall algebra $\HH_{Q,\F_q}$ of $Q$ with the quantization of the enveloping algebra of a generalized Kac-Moody algebra. The isomorphism is constructed using primitive elements of $\HH_{Q,\F_q}$ and depends on such a choice. This is not completely satisfactory since we would like to determine natural generators of the Hall algebra. This work is the beginning of this project as we provide a way to compute explicitly primitive elements of the Hall algebra in isotropic dimensions. This solves the problem of the calculation of primitive elements of the Hall algebra for affine quivers, but the ambiguity still remains.

Here is a brief overview of what is done in this paper. In Section \ref{2}, we introduce notations and known facts of the representation theory of quivers. We focuse in particular on the category of representations of affine quivers and recall their decomposition in blocks. In Section \ref{3}, we recall the definition of the constructible Hall algebra of a quiver. We provide several formulas for the comultiplication and recall the theorem of Sevenhant and Van den Bergh, which will only be used in Section \ref{7} to prove Conjecture \ref{conj2}. A major role is played by the Kronecker quiver for which the classification of representations is explicit. In Section \ref{4}, we write the formulas for the number of indecomposable and absolutely indecomposable representations, and for the dimensions of cuspidal functions for affine quivers. In Section \ref{5}, we calculate explicitly all cuspidal functions of the Jordan quiver. We do not know any formula for nilpotent cuspidal functions of cyclic quivers but we provide sufficient informations on the value they take on indecomposable representations to deal with them. In Section \ref{6}, we determine regular cuspidal functions of affine quivers. Regular cuspidality is a weaker condition than cuspidality. As a numerical coincidence, in imaginary dimensions, cuspidal functions form a codimension one subspace of regular cuspidal functions. We determine an explicit linear form defining this hyperplane. In Section \ref{7}, we use our results to prove two conjectures made by Berenstein and Greenstein in \cite{MR3463039} concerning the symmetry of the Hall algebra. The last Section \ref{8} is devoted to show that an isotropic cuspidal dimension of any quiver has affine support. This immediately implies a conjecture of Bozec and Schiffmann in isotropic dimensions. The letter $I$ is used for both the set of vertices of a quiver and an indecomposable representation of a quiver. It should be clear from the context how to distinguish them.

\medskip

\subsection{The main results}
We state here our main contributions. Let $Q$ be an affine quiver and $\F_q$ a finite field. As in Section \ref{3}, let $\HH_{Q,\F_q}$ be the Hall algebra of $Q$ over $\F_q$.
\subsubsection{Cuspidal functions as the kernel of a linear form}
In Section \ref{6}, we consider the subalgebra $\HH_{Q,\F_q,\RC}$ of $\HH_{Q,\F_q}$ generated by classes $[M]$ for $M$ a regular representation. It is a Hopf algebra for a corestriction of the comultiplication of $\HH_{Q,\F_q}$\footnote{see the introduction for precisions on the bialgebra structure.} endowed with a nondegenerate hermitian product $(-,-)$. The algebra $\HH_{Q,\F_q}$ has a well-understood structure. Indeed, we have a bialgebra graded isomorphism
\[
 \HH_{Q,\F_q,\RC}\simeq \underset{a\in \lvert\PP^1_{\F_q}\rvert}{\bigotimes^{}{}^{\prime}}\HH_{a}
\]
where $\HH_{a}$ is isomorphic to Macdonald's ring of symmetric function or to the nilpotent Hall algebra of a cyclic quiver for some finite number of $a$, and the degree of elements of $\HH_a$ for $a\in\lvert\PP^1_{\F_q}\rvert$ is multiplied by $\deg(a)$ to obtain the degree in $\HH_{Q,\F_q,\RC}$. This decomposition allows us to give an expression of primitive elements of this algebra, called regular cuspidal functions. For $r\geq 1$ and $\delta$ the indecomposable imaginary root of $Q$, let $\HH_{Q,\F_q,\RC}^{\cusp}[r\delta]$ be the subspace of regular cuspidal functions of dimension $r\delta$. We also let
\[
 \chi_{r\delta}=\sum_{\substack{[M]\text{ regular}\\ \dim M=r\delta}}[M].
\]
\begin{theorem}
The kernel of the linear form
\[
 \begin{matrix}
  L :& \HH_{Q,\F_q,\RC}^{\cusp}&\rightarrow &\C\\
  &f&\mapsto&(f,\chi_{r\delta})
 \end{matrix}
\]
is $\HH_{Q,\F_q}^{\cusp}[r\delta]$.
\end{theorem}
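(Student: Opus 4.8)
The plan is to reduce everything to a codimension comparison that is already available from the dimension formulas of Section~\ref{4}. Since $\HH_{Q,\F_q}^{\cusp}[r\delta]$ is contained in $\HH_{Q,\F_q,\RC}^{\cusp}[r\delta]$ (full cuspidality being stronger than regular cuspidality) as a subspace of codimension one, it suffices to prove the two assertions: (i) $L$ vanishes identically on $\HH_{Q,\F_q}^{\cusp}[r\delta]$, so that $\HH_{Q,\F_q}^{\cusp}[r\delta]\subseteq\Ker L$, and (ii) $L\not\equiv 0$. Granting these, $\Ker L$ and $\HH_{Q,\F_q}^{\cusp}[r\delta]$ are two hyperplanes of $\HH_{Q,\F_q,\RC}^{\cusp}[r\delta]$ with one contained in the other, hence equal. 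Assertion (ii) I would settle by a direct computation on a single homogeneous tube: distinct tubes being mutually orthogonal for the hermitian product, only one factor $\HH_a\cong$ Macdonald's ring contributes, its degree-$r$ primitive is the power sum $p_r$, and $\chi_{r\delta}$ restricts on that tube to the sum of the natural basis indexed by partitions of $r$, so that $(p_r,\chi_{r\delta})$ is a nonzero symmetric-function scalar.

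The heart of the matter is (i). Here I would use the Hopf compatibility of the hermitian product, $(f,ab)=(\Delta f,a\otimes b)$, together with the fact that $f\in\HH_{Q,\F_q}^{\cusp}[r\delta]$ is by definition primitive for the \emph{full} comultiplication, $\Delta f=f\otimes 1+1\otimes f$. If $\chi_{r\delta}$ can be written as a sum $\sum_i a_ib_i$ of products of classes $a_i,b_i$ of strictly positive dimension, then for such $f$
\[
(f,\chi_{r\delta})=\sum_i(\Delta f,a_i\otimes b_i)=\sum_i\bigl[(f,a_i)\,\epsilon(b_i)+\epsilon(a_i)\,(f,b_i)\bigr]=0,
\]
since the counit $\epsilon$ vanishes on positive-dimensional classes. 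Thus (i) is \emph{equivalent} to the decomposability of $\chi_{r\delta}$, i.e.\ to $\chi_{r\delta}$ lying in the square of the augmentation ideal. This is consistent with (ii): regular cuspidal functions are primitive only for the regular comultiplication, not for $\Delta$, so there is no contradiction between $\chi_{r\delta}$ being $\Delta$-decomposable and pairing nontrivially with regular cuspidal functions.

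It remains to exhibit such a decomposition, and this is where I expect the real work to lie. First, since $\Ext^1(\RC_a,\RC_b)=0$ for distinct tubes $a\neq b$, any regular module of dimension $r\delta$ spread over several tubes is a Hall product of its tube-components, hence decomposable; modulo decomposables $\chi_{r\delta}$ therefore reduces to a combination $\sum_t c_t\,\pi_t$ of the single-tube primitives $\pi_t$ (power sums), with coefficients $c_t$ that are uniform over the infinitely many homogeneous tubes by the $\PP^1_{\F_q}$-symmetry. The crucial and most delicate step is to cancel this residue using products $[I]\,[P]$ of a preinjective class by a preprojective class with $\dim P+\dim I=r\delta$ and opposite defects: a regular module of dimension $r\delta$ can occur as the middle term of $0\to P\to M\to I\to 0$ (the $\Hom$-vanishing between $\PC,\RC,\IC$ and additivity of the defect make this the only non-regular pattern), and I would compute the corresponding Hall numbers $g^M_{I,P}$, showing that they are uniform over the homogeneous tubes and can be combined, together with a finite correction on the exceptional tubes, to reproduce $\sum_t c_t\,\pi_t$ exactly. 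Establishing this uniformity and matching of coefficients—precisely the mechanism by which the preprojective and preinjective factors connect the otherwise independent tubes and shrink the space of primitives by one—is the main obstacle; it is the representation theory of $Q$ (defect, Auslander--Reiten structure, and the explicit small preprojective and preinjective modules) that forces $\chi_{r\delta}$ into the square of the augmentation ideal.
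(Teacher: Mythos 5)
Your reduction is sound as far as it goes: the codimension-one count, the equivalence (via the Hopf pairing and Lemma \ref{orthog}) of statement (i) with the assertion that $\chi_{r\delta}$ lies in $\sum_{\dd',\dd''>0}\HH_{Q,\F_q}[\dd']\HH_{Q,\F_q}[\dd'']$, and the observation that the only non-regular pattern available to cancel the single-tube residue is a product of a preinjective class by a preprojective class are all correct and match the underlying mechanism. But the proposal stops exactly where the proof has to start: you never exhibit the decomposition, you only announce that you ``would compute the corresponding Hall numbers'' and establish a uniformity and matching of coefficients that you yourself identify as the main obstacle. As written, step (i) is therefore not proved. For comparison, the paper sidesteps the need for a full decomposition of $\chi_{r\delta}$: it proves the identity $L(f_{x,n})=c\,\Delta(f_{x,n})(I_{\theta}^{\oplus r},S_{i_0}^{\oplus r})$ with $c\neq 0$, where $\theta=\delta-e_{i_0}$, $I_\theta$ is preinjective and $S_{i_0}$ is preprojective; equivalently, $\chi_{r\delta}$ pairs with every regular cuspidal function the same way as a \emph{single} product $[I_{\theta}^{\oplus r}][S_{i_0}^{\oplus r}]$ does, up to a nonzero scalar. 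The computation is made tractable by Proposition \ref{ess}: only representations in the essential image of the Kronecker functor $F$ contribute to $\Delta(-)(I_{\theta}^{\oplus r},S_{i_0}^{\oplus r})$, so everything reduces to an explicit automorphism count for $K_2$. This is precisely the ``delicate step'' you postponed, and it is the actual content of the theorem; without it your argument only shows that the kernel of $L$ \emph{contains} the decomposables it manifestly contains.

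Step (ii) also has two smaller holes. First, if you pair the power sum of a degree-one homogeneous tube with $\chi_{r\delta}$ you get $\xi(r,q)=\sum_{|\lambda|=r}\phi_{l(\lambda)-1}(q)/a_{\lambda}(q)$, an alternating sum whose nonvanishing is not ``a nonzero symmetric-function scalar'' by inspection; you would need to evaluate it (it is in fact $1/(q^r-1)$, but that requires proof). The paper avoids this by choosing a homogeneous tube of degree exactly $r$, where only $\xi(1,q^r)=1/(q^r-1)$ is needed. Second, your argument presupposes a homogeneous tube exists in the relevant degree, which fails for $r=1$, $q=2$ in types $D$ and $E$ (all three degree-one points of $\PP^1_{\F_2}$ are then non-homogeneous); the paper treats this case separately by an explicit computation in a non-homogeneous tube. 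Neither issue is fatal, but both need to be addressed for the proof to close.
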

\subsubsection{An action of a permutation group on the Hall algebra}
To prove Conjecture \ref{conj2}, we use the following action of a product of permutation groups on the Hall algebra which deserves to be considered separately.

As in Section \ref{2}, $D$ denote the set of closed points of $\PP^1_{\F_q}$ parametrizing non-homogeneous tubes. For $e\geq 2$, we let $N(e)$ be the number of closed points of $\PP^1_{\F_q}$ of degree $e$ and $N(1)=q+1-|D|$. Let $\mathfrak{S}$ be the group of degree preserving permutations of $\lvert\PP^1_{\F_q}\rvert\setminus D$. The group $\mathfrak{S}$ is isomorphic to
\[
 \prod_{e\geq 1}\mathfrak{S}_{N(e)}
\]
where for a positive integer $N$, $\mathfrak{S}_N$ is the symmetric group on $N$ letters. We define an action
\[
 \mathfrak{S}\rightarrow \Aut(\HH_{Q,\F_q})
\]
as follows. For $M,N$ two representations, $\lambda$ a partition, $x\in\lvert\PP^1_{\F_q}\rvert\setminus D$ and $\sigma\in \mathfrak{S}$,
\begin{gather*}
 \sigma\cdot [M]=[M] \quad \text{ if $[M]$ is preprojective, preinjective or in a non-homogeneous tube}\\
 \sigma\cdot [I_{\lambda}(x)]=[I_{\lambda}(\sigma(x))]\\
 \sigma\cdot ([M]\oplus [N])=\sigma\cdot [M]\oplus \sigma\cdot[N]
\end{gather*}
where for notational reasons, we define here $[M]\oplus[N]=[M\oplus N]$.
It is easily seen that $\sigma$ acts as a graded linear isomorphism on $\HH_{Q,\F_q}$. We prove in Section \ref{7} the following facts.
\begin{enumerate}
 \item $\sigma$ acts as an isometry of $\HH_{Q,\F_q}$,
 \item The action of $\sigma$ leaves $\HH_{Q,\F_q,\RC}$ and $\HH_{Q,\F_q,\RC}^{\cusp}$ stable,
 \item $\sigma$ commutes with the linear form $L$. In particular, it preserves $\HH_{Q,\F_q}^{\cusp}[\dd]$ for any dimension $\dd$.
\end{enumerate}
This yields the following result.

\begin{theorem}
The group $\mathfrak{S}$ acts on $\HH_{Q,\F_q}$ by degree preserving Hopf algebra automorphisms.
\end{theorem}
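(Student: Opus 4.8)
The plan is to reduce the assertion to two facts about a single $\sigma$: that it is a graded bijective algebra automorphism of $\HH_{Q,\F_q}$, and that it is an isometry of Green's Hermitian form $(-,-)$. I would verify these two first, and then deduce compatibility with the comultiplication for free from the self-duality of the Hall algebra. Indeed, by Green's theorem the coproduct $\Delta$ is adjoint to the product, so that $(a,b\ast c)=(\Delta a,b\otimes c)$ for all $a,b,c$, the right-hand pairing being the induced (braided) form on $\HH_{Q,\F_q}\otimes\HH_{Q,\F_q}$. As $\sigma$ is degree preserving it commutes with the braiding, so $\sigma\otimes\sigma$ is again an isometry; then for every $a,b,c$ one computes $(\Delta(\sigma a),\sigma b\otimes\sigma c)=(\sigma a,\sigma b\ast\sigma c)=(\sigma a,\sigma(b\ast c))=(a,b\ast c)=(\Delta a,b\otimes c)=((\sigma\otimes\sigma)\Delta a,\sigma b\otimes\sigma c)$. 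Since $\sigma$ is bijective the vectors $\sigma b\otimes\sigma c$ span the tensor square, and nondegeneracy of the form yields $\Delta\circ\sigma=(\sigma\otimes\sigma)\circ\Delta$. Thus $\sigma$ is a graded bialgebra automorphism; since $\HH_{Q,\F_q}$ is graded and connected its antipode is determined by the bialgebra structure, so $\sigma$ is automatically a Hopf algebra automorphism. That the assignment $\sigma\mapsto(\sigma\cdot)$ is a group homomorphism $\mathfrak S\to\Aut(\HH_{Q,\F_q})$ is immediate from the defining formulas, giving the claimed action.

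Degree preservation and bijectivity are clear, since $\sigma$ permutes the basis of isomorphism classes $[M]$ and $\dim I_\lambda(x)=\lvert\lambda\rvert\deg(x)\delta$ depends on $x$ only through $\deg(x)$, which $\sigma$ fixes (the preprojective, preinjective and non-homogeneous classes being fixed outright). For the isometry I would use that $(-,-)$ is diagonal in this basis, with $([M],[M])=\lvert\Aut M\rvert^{-1}$. A homogeneous tube at a point $x$ of degree $d$ is equivalent to the category of finite-dimensional nilpotent representations of the Jordan quiver over $\F_{q^{d}}$, so $\lvert\Aut I_\lambda(x)\rvert$ depends only on $d$ and $\lambda$; as $\lvert\Aut M\rvert$ is determined by the numbers $\dim\Hom(M_i,M_j)$ among the indecomposable summands of $M$, and these are preserved by $\sigma$ (internal to a tube they depend only on the degree; between distinct tubes they vanish; from the fixed preprojective or preinjective summands into $I_\lambda(x)$ they again depend on $x$ only through $\deg(x)$), we get $\lvert\Aut\sigma M\rvert=\lvert\Aut M\rvert$, so $\sigma$ is an isometry.

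The hard part is multiplicativity, i.e. the invariance of all Hall numbers $g^{X}_{M,N}=g^{\sigma X}_{\sigma M,\sigma N}$. Here I would lean on the block structure of the module category recalled in Section \ref{2}. The trichotomy of indecomposables into preprojective, regular and preinjective ones, together with the one-directional vanishing of $\Hom$ and, through the Auslander--Reiten translate, of $\Ext$, produces a triangular factorisation $\HH_{Q,\F_q}=\HH^{\mathrm{pp}}\cdot\HH_{Q,\F_q,\RC}\cdot\HH^{\mathrm{pi}}$ of the multiplication. On the regular factor the decomposition $\HH_{Q,\F_q,\RC}\simeq\bigotimes'_{a\in\lvert\PP^1_{\F_q}\rvert}\HH_a$ of Section \ref{6} reduces the claim to the remark that $\sigma$ simply permutes the tensor factors $\HH_a$ attached to homogeneous points of a common degree, which are pairwise \emph{isomorphic} as graded Hopf algebras, so that permuting them is manifestly an automorphism of the restricted tensor product. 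The genuine obstacle is the invariance of the mixed ``straightening'' constants, arising from extensions between a fixed preprojective or preinjective module and a regular module supported at a homogeneous point $x$. I expect to settle these by showing that the homological quantities entering the relevant counts --- the pertinent $\dim\Hom$ and $\dim\Ext$, and hence, the category being hereditary, the numbers of corresponding subobjects --- depend on $x$ only through $\deg(x)$, so that they are untouched when $x$ is replaced by another point of the same degree. As $\mathfrak S$ is generated by transpositions of equal-degree points it suffices to treat one such transposition, after which the three contributions combine to give multiplicativity, completing the proof.
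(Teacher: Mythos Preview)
Your deduction of coalgebra compatibility from multiplicativity plus isometry via Green's adjunction is correct and clean, and your isometry argument is essentially the paper's. The problem is the multiplicativity step.

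You claim that the mixed straightening constants are controlled by $\dim\Hom$ and $\dim\Ext$ and hence, the category being hereditary, so are ``the numbers of corresponding subobjects''. This inference fails: Hall numbers are not determined by these dimensions. The cleanest place to see the gap is the product $[I]\cdot[P]$ with $I$ preinjective and $P$ preprojective. Both factors are fixed by $\sigma$, so $\sigma$-equivariance of this product is the statement $F^{S}_{I,P}=F^{\sigma S}_{I,P}$ for every middle term $S$; taking $S=I_\lambda(x)$ in a homogeneous tube this is exactly (up to the automorphism factors, which you have shown are $\sigma$-invariant) Conjecture~\ref{conj2}. In the paper that conjecture is obtained as a \emph{corollary} of the theorem you are proving, so as written your argument is circular at this point. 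A direct verification of these Hall-number identities is possible, but it is precisely the content of the Deng--Ruan proof via weighted projective lines that the paper is trying to bypass.

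The paper's route avoids computing any mixed Hall numbers. After the isometry (your argument), one observes that $\sigma$ permutes the homogeneous tensor factors of $\HH_{Q,\F_q,\RC}$ and hence stabilises $\HH_{Q,\F_q,\RC}^{\cusp}$, and that $L(f_{x,n})=\xi(n,q^{\deg x})$ depends on $x$ only through $\deg x$, so $\sigma$ commutes with $L$. By Theorem~\ref{noyau} this forces $\sigma$ to preserve the genuine cuspidal space $\HH_{Q,\F_q}^{\cusp}[\dd]$ in every dimension. Now one invokes the Sevenhant--Van den Bergh presentation (Theorem~\ref{SVthm}): $\HH_{Q,\F_q}$ is generated by an orthogonal family of cuspidals subject to relations that depend only on the matrix $((\dim f_i,\dim f_j))$. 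Since $\sigma$ is a degree-preserving isometry sending one such generating family to another, it respects the relations and is therefore an algebra automorphism. Compatibility with $\Delta$ then follows, either by your duality argument or directly from the fact that $\sigma$ is an algebra map sending primitives to primitives.
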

Usually, quantum groups have very few degree preserving Hopf algebra automorphisms. The only ones are obtained by rescaling the Chevalley generators. Here, there is multiplicities for imaginary roots, from which we obtain non-trivial automorphisms.


\subsection{Acknowledgements} I warmly thank Olivier Schiffmann for his constant support and availability during the preparation of this paper and for many useful comments on a preliminary version.

\section{Structure of the category of representations of affine quivers over a finite field}\label{2}
\subsection{Notation and recollections on quiver representations}
In this section, let $Q=(I,\Omega)$ be an arbitrary quiver with set of vertices $I$ and set of arrows $\Omega$ and $k$ a field.  We denote by $\Rep_Q(k)$ the category of finite dimensional representations of $Q$ over $k$. Equivalently, this is the category of finite dimensional modules over the path algebra $kQ$ of $Q$. This category is known to be a $k$-linear abelian category of homological dimension one.
\subsubsection{The Euler form}For $\mathcal{A}$ an abelian category, $K_0(\mathcal{A})$ is its Grothendieck group. For $M$ an object of $\mathcal{A}$, $\xoverline{M}\in K_{0}(\mathcal{A})$ is its class in the Grothendieck group. The bilinear (usually non-symmetric) form
\[
\langle -,-\rangle : K_0(\mathcal{A})\times K_0(\mathcal{A})\rightarrow \Z,\quad\quad \langle \xoverline{M},\xoverline{N}\rangle=\homm(M,N)-\ext^1(M,N)
\]
where by definition $\homm(M,N)=\dim_k \Hom_{kQ}(M,N)$ and $\ext^1(M,N)=\dim_k\Ext^1_{kQ}(M,N)$ is called the Euler form of the quiver. It factorizes through the (surjective) morphism of abelian groups $\dim : K_0(\mathcal{A})\rightarrow \Z^I$ given by the dimension $\xoverline{M}\mapsto \dim M$. In fact, if $\dd=\dim M\in \N^I$ and $\dd'=\dim N\in \N^I$, we have the explicit formula :
\[
 \langle\xoverline{M},\xoverline{N}\rangle=\sum_{i\in I}\dd_i\dd'_i-\sum_{\alpha : i\rightarrow j}\dd_i\dd'_j.
\]

We use the same notation
\[
 \langle -,-\rangle : \Z^I\times \Z^I\rightarrow \Z
\]
for the induced bilinear form. We will also consider its symmetrized version :
\[
(-,-) : \Z^I\times \Z^I\rightarrow \Z, \quad\quad (\dd,\dd')=\langle\dd,\dd'\rangle+\langle\dd',\dd\rangle \quad \text{for any $\dd,\dd'\in\Z^I$}.
\]
In case of an affine quiver, the symmetrized Euler form is nonnegative with one dimensional kernel generated by an indecomposable integer valued positive vector, called the indecomposable imaginary root, denoted by the letter $\delta$ (see \cite[Theorem 8.6]{SchifflerQuiver}).

\subsubsection{Dualization}\label{dualisation}Let $Q$ be a quiver and $Q^*$ the quiver in which we change the orientation of all arrows, which deserves the name of dual quiver. A representation $V$ of $Q$ gives a representation $V^*$ of $Q^*$ obtained by dualizing the vector spaces at the vertices of $Q$ and replacing the linear maps between them by their transpose. Explicitly, if $V=((V_i)_{i\in I}, (f_{\alpha})_{\alpha\in\Omega})$ is a representation of $Q$, its dual is $V^*=((V_i^*)_{i\in I}, (^t\!f_{\alpha})_{\alpha\in\Omega})$. We obtain in this way an equivalence of categories
\[
\tilde{D} : \Rep_Q(k)^{\op}\rightarrow\Rep_{Q^*}(k)
\]
which associate to a representation its dual. In particular, there is an identification $\Hom_{kQ^*}(M^*,N^*)\simeq \Hom_{kQ}(N,M)$ and $\Ext^1_{kQ^*}(M^*,N^*)\simeq \Ext_{kQ}^1(N,M)$.

\subsection{The structure of the category of representations of an affine quiver}
\subsubsection{Cyclic and Jordan quivers}
Let $k$ be an arbitrary field. Cyclic and Jordan quivers are the non-acyclic quivers of affine type. This section introduces notations which will be later used. Let $J$ be the Jordan quiver. We let $J_n$ be the $n\times n$ indecomposable nilpotent matrix with ones on the superdiagonal and zeros everywhere else:
\[
\begin{pmatrix}
0&1&0&\hdots&0\\
0&\ddots&\ddots& &0\\
\vdots&\ddots&\ddots&\ddots&\vdots\\
&&\ddots&\ddots&1\\
0&\hdots&\hdots&0&0
\end{pmatrix}.
\]
We also see $J_n$ as a $n$-dimensional representation of the Jordan quiver. For a partition $\lambda=(\lambda_1,\hdots)$, we write $J_{\lambda}=\bigoplus_{j\geq 1}J_{\lambda_j}$. Any nilpotent representation of $J$ over $k$ is isomorphic to exactly one representation of the following set:
\[
\{J_{\lambda} : \lambda\text{ partition}\}.
\]
All other isomorphism classes of representations of the Jordan quiver also have an explicit representative which will not be used. See \cite[Section 3.]{HuaCounting}.

Let $n\geq 1$ be a integer and $C_n$ be the cyclic quiver of length $n$. What follows also applies to $C_1$ which is the Jordan quiver. We distinguish two types of representations for cyclic quivers: invertible representations, for which every arrow is invertible, and nilpotent representations, for which the composition of arrows along any sufficiently long path is zero. Before describing them, we introduce some notations. We suppose that the vertices are indexed by $\Z/n\Z$ with exactly one arrow $i\rightarrow i+1$ for $i\in \Z/n\Z$. A representation of $C_n$ is a $n$-tuple $(a_{0},\hdots,a_{n-1})$ where $a_i : V_i\rightarrow V_{i+1}$ for $i\in\Z/n\Z$ and some $k$-vector spaces $V_i$. We let $\Rep_{C_n}^{\inv}(k)$ be the full subcategory of invertible representations of $C_n$ over $k$ and $\Rep_{C_n}^{\nil}(k)$ be the full subcategory of nilpotent representations.
\begin{enumerate}
\item $\Rep_{C_n}(k)=\Rep_{C_n}^{\nil}(k)\sqcup\Rep_{C_n}^{\inv}(k)$ is a decomposition in blocks of $\Rep_{C_n}(k)$.
\item A full set of representatives of nilpotent indecomposable representations is built as follows. Let $l$ be a nonnegative integer and for $0\leq m\leq l$, $V_m=ke_m$ a one-dimensional vector space generated by $e_m$. The $\Z/n\Z$-graded vector space $V_{i,l}=\bigoplus_{\bar{x}\in\Z/n\Z}\bigoplus_{m\equiv x-i\pmod{n}}V_m$ with the endomorphism sending $e_m$ to $e_{m+1}$ if $0\leq m<l$ and $e_l$ to zero defines an indecomposable representation of $C_n$ again denoted by $V_{i,l}$. The set
\[
\{V_{i,l} : i\in\Z/n\Z, l\geq 0\}
\]
contains exactly one representative of each isomorphism class of indecomposable nilpotent representations. Define also $S_i=V_{i,0}$ for $i\in\Z/n\Z$.

\item The following functor is an equivalence of categories:
\begin{equation}\label{equivJC}
\begin{matrix}
G_n :& \Rep_{J}^{\inv}(k)&\rightarrow &\Rep_{C_n}^{\inv}(k)\\
&(V,a)&\mapsto&(\id,\hdots,\id,a).
\end{matrix}
\end{equation}
\end{enumerate}
\begin{proof}The statement $(3)$ is straightforward and $(2)$ is proved in \cite[Section 3.5]{SchiffmannHall}. We prove $(1)$. Since $\Rep_{C_n}^{\nil}(k)$ and $\Rep_{C_n}^{\inv}(k)$ are stable under taking subobjects and quotients in $\Rep_{C_n}(k)$, any morphism from a nilpotent representation $M$ to an invertible representation $N$ is zero, and conversely. The extension spaces $\Ext^{1}(M,N)$ and $\Ext^{1}(N,M)$ also vanish, since using the Euler form and that $\dim N$ is some multiple $r\delta=(r,\hdots,r)$ of the indecomposable imaginary root $\delta=(1,\hdots,1)$, we have:
\[
0=(\xoverline{M},\xoverline{N})=-\ext^1(M,N)-\ext^1(N,M)=0.
\]

\end{proof}

\subsubsection{Decomposition in blocks of $\Rep_{C_n}(k)$}\label{blockCn}
Let $k$ be a field. We give here a decomposition of $\Rep_{C_n}(k)$ in blocks. Let $V$ be a $k$-vector space of dimension $d$. The group $\GL(V)$ acts algebraically on $\Hom(V,V)$ by conjugation with quotient $\Hom(V,V)//\GL(V)\simeq S^n\AAAA^1_k$. Denote by $\pi_V$ the projection $\pi_V : \Hom(V,V)\rightarrow S^n\AAAA^1_k$.
\begin{theorem}
We have a decomposition in blocks
\[
\Rep_{J}(k)\simeq \bigsqcup_{a\in\mid\AAAA^1_k\mid}\Rep^a_J(k),
\]
where $(V,x)\in\Rep^a_J(k)$ if and only if $\pi_V(x)=(a,\hdots,a)$.
\end{theorem}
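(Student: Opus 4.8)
The plan is to recognise $\Rep_J(k)$ as the category of finite length modules over the polynomial ring $k[t]$ and to obtain the block decomposition as the primary decomposition of such modules. A representation $(V,x)$ of the Jordan quiver is exactly a finite-dimensional $k$-vector space $V$ equipped with the $k[t]$-module structure in which $t$ acts by $x$; since $V$ is finite-dimensional it is a torsion module over the principal ideal domain $k[t]$. First I would invoke the structure theorem to write the canonical primary decomposition $V=\bigoplus_{p}V_{(p)}$, indexed by the monic irreducible polynomials $p\in k[t]$, where $V_{(p)}=\{v\in V : p(x)^N v=0\text{ for }N\gg 0\}=\Ker p(x)^{\dim V}$. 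Each $V_{(p)}$ is $x$-stable, and the decomposition is functorial because any morphism $f\colon (V,x)\to (W,y)$ satisfies $fp(x)=p(y)f$ and hence maps $V_{(p)}$ into $W_{(p)}$.

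Next I would match this with the indexing in the statement. The closed points of $\AAAA^1_k$ are precisely the maximal ideals of $k[t]$, i.e. the monic irreducible polynomials $p=p_a$, so the decomposition reads $(V,x)=\bigoplus_{a\in\lvert\AAAA^1_k\rvert}(V_{(p_a)},x|_{V_{(p_a)}})$. To see that the summand at $a$ lies in $\Rep^a_J(k)$, I would note that on $V_{(p_a)}$ the minimal, hence the characteristic, polynomial of $x$ is a power of $p_a$; under $\pi$ the operator $x|_{V_{(p_a)}}$ is sent to its characteristic divisor, which is concentrated at the single point $a$, that is, to $(a,\dots,a)$. Conversely, if $\pi_V(x)=(a,\dots,a)$ then the characteristic polynomial of $x$ is a power of $p_a$, so by Cayley--Hamilton $V=V_{(p_a)}$ and the other primary parts vanish; thus membership in $\Rep^a_J(k)$ is exactly the stated condition.

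It then remains to show that the $\Rep^a_J(k)$ are genuine blocks, i.e. that there are neither morphisms nor extensions between distinct ones. For a morphism $f\colon(V,x)\to(W,y)$ with $(V,x)\in\Rep^a_J(k)$, $(W,y)\in\Rep^b_J(k)$ and $a\neq b$, the identity $fp_a(x)=p_a(y)f$ together with $p_a(x)^{\dim V}=0$ gives $p_a(y)^{\dim V}f=0$; but $p_a$ is coprime to the characteristic polynomial of $y$, which is a power of $p_b\neq p_a$, so $p_a(y)$ is invertible and $f=0$. Combined with the functoriality of the primary decomposition, this Hom-orthogonality forces any short exact sequence whose outer terms lie in distinct blocks to split: decomposing the middle term into primary parts realises the sequence as the direct sum of its primary components, which are trivial outside the two blocks involved. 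Hence $\Ext^1$ between distinct blocks vanishes as well, and $\Rep_J(k)=\bigsqcup_a\Rep^a_J(k)$ is a decomposition in blocks.

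The underlying module theory here is entirely classical, so I do not expect a serious conceptual difficulty; the point requiring genuine care is the translation between the geometric condition $\pi_V(x)=(a,\dots,a)$ and the algebraic primary decomposition over a field $k$ that need not be algebraically closed. One must work throughout with monic irreducible polynomials rather than with eigenvalues, since the latter generally live only in proper extensions of $k$, and one must check that the GIT quotient $\Hom(V,V)/\!/\GL(V)$ really does compute the characteristic divisor over such $k$.
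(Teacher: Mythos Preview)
Your argument is correct and is exactly the standard one: identify $\Rep_J(k)$ with finite-dimensional $k[t]$-modules, apply the primary decomposition over the PID $k[t]$, and match monic irreducibles with closed points of $\AAAA^1_k$. The Hom-orthogonality and the splitting argument for $\Ext^1$ are fine; your caveat about interpreting $\pi_V$ over a non-closed field is well placed, and the invariant ring $k[\Hom(V,V)]^{\GL(V)}$ being generated by the coefficients of the characteristic polynomial is precisely what makes the condition $\pi_V(x)=(a,\dots,a)$ equivalent to the characteristic polynomial of $x$ being a power of $p_a$.

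There is nothing to compare against: the paper states this theorem without proof, treating it as a standard fact about $k[t]$-modules. Your write-up supplies exactly the argument one would expect.
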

\begin{theorem}
We have a decomposition in blocks
\[
\Rep_{C_n}(k)\simeq\bigsqcup_{a\in\mid\AAAA^1_k\mid}\Rep_{C_n}^a(k)
\]
where $\Rep_{C_n}^0(k)=\Rep_{C_n}^{\nil}(k)$ and $\Rep_{C_n}^a(k)$ is the full subcategory $G_n(\Rep_J^{a}(k))$ of $\Rep_{C_n}^{\inv}(k)$.
\end{theorem}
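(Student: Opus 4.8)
The plan is to leverage the equivalence of categories $G_n$ established in item $(3)$ and the block decomposition of $\Rep_J(k)$ proved in the preceding theorem, pushing the latter forward along $G_n$ to obtain the corresponding statement for $\Rep_{C_n}(k)$. The key observation is that the nilpotent and invertible subcategories are already separated as a block decomposition by item $(1)$, so it suffices to refine the invertible block $\Rep_{C_n}^{\inv}(k)$ into the finer pieces indexed by closed points $a\in\mid\AAAA^1_k\mid\setminus\{0\}$.

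First I would recall that $\Rep_J(k)=\bigsqcup_{a\in\mid\AAAA^1_k\mid}\Rep_J^a(k)$ is a block decomposition, where membership in $\Rep_J^a(k)$ is detected by the condition $\pi_V(x)=(a,\ldots,a)$ on the characteristic polynomial (equivalently, the representation is supported at the single closed point $a$). Restricting to invertible representations, the piece $\Rep_J^0(k)$ (nilpotent representations) drops out, and we obtain a block decomposition $\Rep_J^{\inv}(k)=\bigsqcup_{a\neq 0}\Rep_J^a(k)$. Next, since $G_n : \Rep_J^{\inv}(k)\rightarrow\Rep_{C_n}^{\inv}(k)$ is an equivalence of categories, it transports this block decomposition to $\Rep_{C_n}^{\inv}(k)$, yielding the pieces $\Rep_{C_n}^a(k)=G_n(\Rep_J^a(k))$ for $a\neq 0$. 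Combining with $(1)$, which places the nilpotent block $\Rep_{C_n}^{\nil}(k)=\Rep_{C_n}^0(k)$ orthogonally to the invertible block, gives the asserted decomposition $\Rep_{C_n}(k)\simeq\bigsqcup_{a\in\mid\AAAA^1_k\mid}\Rep_{C_n}^a(k)$.

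The only point requiring genuine verification is that the decomposition really is a block decomposition and not merely a set-theoretic partition of indecomposables. I would check that for $a\neq b$ there are no nonzero morphisms and no extensions between objects of $\Rep_{C_n}^a(k)$ and $\Rep_{C_n}^b(k)$. For $\Hom$ and $\Ext^1$ this follows directly from the equivalence $G_n$, which preserves $\Hom$-spaces and (being an equivalence of hereditary abelian categories) $\Ext^1$-spaces, together with the analogous vanishing already known in $\Rep_J(k)$; the vanishing there is the content of the preceding theorem and reduces to the fact that representations supported at distinct points of $\AAAA^1_k$ have disjoint sets of generalized eigenvalues. The orthogonality between the nilpotent block and each invertible block $\Rep_{C_n}^a(k)$ is exactly the computation carried out in the proof of $(1)$, using that $\dim N=r\delta$ and the symmetrized Euler form vanishes.

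The main obstacle, such as it is, lies in making precise that $G_n$ transports the finer block structure faithfully, i.e. that an invertible $\Rep_J$-representation lands in the $C_n$-block indexed by the same closed point $a$. This is essentially bookkeeping: one traces how $G_n$, which inserts identity maps on all arrows except the last, affects the conjugacy-invariant data $\pi_V(x)$, and observes that the single nontrivial map $a$ of the $C_n$-representation has the same characteristic polynomial as the Jordan representation it comes from. Once this identification of invariants is in place, the theorem follows formally from the two results cited.
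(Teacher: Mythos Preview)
Your proposal is correct. The paper actually states this theorem without proof, presenting it as an immediate consequence of the surrounding material; your argument --- using item $(1)$ to separate the nilpotent and invertible pieces, then transporting the block decomposition of $\Rep_J^{\inv}(k)$ through the equivalence $G_n$ of item $(3)$ --- is precisely the deduction the paper's presentation invites. One minor remark: your final paragraph about checking that $G_n$ carries the $a$-block to the $a$-block is unnecessary, since $\Rep_{C_n}^a(k)$ is \emph{defined} as $G_n(\Rep_J^a(k))$; there is no independent invariant on the $C_n$-side to match.
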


\subsubsection{Acyclic affine quivers}
We recollect known facts on the representation theory of acyclic affine quivers (see \cite{MR774589}) over a finite field (some results may hold in greater generality). The exposition here follows and can be completed by \cite[Section 3.6]{SchiffmannHall} and \cite[\S 8]{CBlec}. Troughout this section, $Q=(I,\Omega)$ is an acyclic affine quiver. This condition is equivalent to the finite dimensionality of the path algebra of $Q$ over any field and excludes the Jordan quiver and cyclic quivers which have been studied above.

\begin{theorem}
Let $k$ be an arbitrary field. Then, there exists an adjunction
\[
\tau^- : \Rep_{Q}(k) \rightleftarrows \Rep_Q(k) : \tau
\]
with bi-natural isomorphisms\footnote{We say that $(\tau^-,\tau)$ is a Serre adjunction.} (the star means the dual with respect to the $k$-vector space structure):
\[
\Ext^1(M,N)^*\simeq \Hom(N,\tau M),\quad\quad \Ext^1(M,N)^*\simeq \Hom(\tau^-N,M).
\]
\end{theorem}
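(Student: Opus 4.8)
The plan is to construct the Auslander--Reiten translations $\tau$ and $\tau^-$ explicitly from the Nakayama functor and to deduce the two displayed isomorphisms (the Auslander--Reiten formulas) by a single homological computation, the adjunction then being a formal consequence. Write $A=kQ$ for the path algebra, which is finite-dimensional (as $Q$ is acyclic) and hereditary, and write $(-)^*=\Hom_k(-,k)$ for the $k$-linear duality. First I would introduce the Nakayama functor $\nu=\Hom_A(-,A)^*\colon \Rep_Q(k)\to\Rep_Q(k)$ and its quasi-inverse $\nu^-=\Hom_A(A^*,-)$ (an equivalence between projectives and injectives), and record the one computation on which everything rests: for $P$ projective there is an isomorphism $\Hom_A(P,N)\simeq \Hom_A(N,\nu P)^*$, natural in both $P$ and $N$. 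One checks it for $P=A$, where both sides are canonically identified with $N$, and extends it additively to all finitely generated projectives.

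Next I would define the functors. For a representation $M$, choose a minimal projective presentation $P_1\xrightarrow{f}P_0\to M\to 0$ and set $\tau M=\Ker(\nu f\colon \nu P_1\to\nu P_0)$; dually, from a minimal injective copresentation $0\to N\to I^0\to I^1$ set $\tau^- N=\Coker(\nu^- I^0\to \nu^- I^1)$. Minimality (uniqueness of such presentations up to isomorphism, and the absence of split summands) is what makes these assignments functorial on morphisms and independent of the chosen presentations up to natural isomorphism.

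The core step is the first formula. Since $A$ is hereditary the syzygy $\Ker(P_0\to M)$ is projective, so the presentation may be taken to be a short exact sequence $0\to P_1\to P_0\to M\to0$; applying $\Hom_A(-,N)$ and using $\Ext^1(P_0,N)=0$ yields the exact sequence $\Hom_A(P_0,N)\xrightarrow{f^*}\Hom_A(P_1,N)\to\Ext^1(M,N)\to 0$. Transporting the two left-hand terms through the natural isomorphism above turns $f^*$ into $\Hom_A(N,\nu f)^*$, so $\Ext^1(M,N)$ becomes the cokernel of $\Hom_A(N,\nu f)^*$, that is, the $k$-dual of $\Ker\Hom_A(N,\nu f)$. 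As $\Hom_A(N,-)$ is left exact and $\tau M=\Ker(\nu f)$, this kernel equals $\Hom_A(N,\tau M)$, giving $\Ext^1(M,N)^*\simeq\Hom_A(N,\tau M)$, natural in both variables. The second formula is obtained by the dual argument with injective copresentations and $\nu^-$; alternatively one transports the first formula along the dualization equivalence $\tilde{D}$ of Section \ref{dualisation} via $\Ext^1_{kQ^*}(M^*,N^*)\simeq\Ext^1_{kQ}(N,M)$, which interchanges the roles of $\tau$ and $\tau^-$.

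Finally, comparing the two formulas yields a bi-natural isomorphism $\Hom_A(\tau^- N,M)\simeq\Ext^1(M,N)^*\simeq\Hom_A(N,\tau M)$, which is precisely the adjunction $\tau^-\dashv\tau$. The main obstacle is not the homological identity but the bookkeeping of naturality: checking that $\tau$ and $\tau^-$ are genuine functors (well-defined on morphisms and independent of the presentations, using that two lifts of a morphism differ by a map through a projective, which dies in the kernel) and that every isomorphism above is natural in both arguments, so that the last comparison is an honest adjunction and not merely a pointwise coincidence. The hereditary hypothesis, i.e. homological dimension one, is exactly what keeps all the $\Ext$-sequences two-term and makes $\tau$ left exact, so that no higher correction terms intervene and the formulas hold with ordinary $\Hom$.
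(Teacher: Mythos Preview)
The paper does not prove this theorem: it is stated as a recollection of known facts about acyclic affine quivers, with references to \cite{MR774589}, \cite[Section 3.6]{SchiffmannHall}, and \cite[\S 8]{CBlec}. Your sketch via the Nakayama functor is the standard construction of the Auslander--Reiten translations and the Auslander--Reiten formulas, and it is correct in the hereditary setting; in particular, your observation that hereditarity allows one to replace the minimal projective presentation by a short exact sequence is exactly what makes the ordinary $\Hom$ (rather than the stable $\underline{\Hom}$) appear on the right-hand side. So your proposal supplies a proof where the paper simply cites one.
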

The functors $\tau$ and $\tau^-$ are known as \emph{Auslander-Reiten translates}. From the properties of $\tau^-$ and $\tau$, it is immediate that a representation $M$ of $Q$ over $k$ is projective if and only if $\tau(M)=0$ and injective if and only if $\tau^-(M)=0$. We call an indecomposable representation $M$ of $Q$ over $k$
\begin{enumerate}
\item preprojective if $\tau^nM=0$ for $n\gg 0$,
\item preinjective if $\tau^{-n}M=0$ for $n\gg0$,
\item regular if $\tau^nM\neq 0$ for all $n\in\Z$.
\end{enumerate}
Furthermore, we call a representation $M$ of $Q$ over $k$ preprojective if all its indecomposable direct summands are preprojective, and we adopt similar terminology for preinjective and regular representations. The full subcategory of $\Rep_Q(k)$ of preprojective (resp. preinjective, resp. regular) representations is denoted by $\Rep_Q^{\PC}(k)$ (resp. $\Rep_Q^{\IC}(k)$, resp. $\Rep_Q^{\RC}$). These are extension closed subcategories of $\Rep_Q(k)$, hence exact categories. Moreover, $\Rep_Q^{\RC}(k)$ is an abelian category (though not stable under taking subobjects in the bigger category $\Rep_k Q$). The three categories $\Rep_Q^{\RC}(k), \Rep_Q^{\PC}(k)$ and $\Rep_Q^{\IC}(k)$ are disjoint and the category to which an indecomposable $M$ belongs is given by the sign of its defect defined by $\partial M=\langle\delta,\dim M\rangle$. An indecomposable representation $M$ is preprojective if and only if $\partial M<0$, preinjective if and only of $\partial M>0$ and regular if and only if $\partial M=0$.
The following proposition gives the interactions between these three subcategories.
\begin{proposition}\label{extensions}
For $M\in \Rep_Q^{\PC}(k)$, $N\in\Rep_Q^{\IC}(k)$, $L\in\Rep_Q^{\RC}(k)$, we have
\[
\Hom(N,M)=\Hom(N,L)=\Hom(L,M)=0,
\]
\[
\Ext^1(M,N)=\Ext^1(L,N)=\Ext^1(M,L)=0.
\]
\end{proposition}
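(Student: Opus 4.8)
The plan is to reduce all six statements to three vanishing statements about $\Hom$-spaces between indecomposables, and then to recover the three $\Ext^1$-statements by a single application of the Auslander--Reiten formula. Since $\Hom$ and $\Ext^1$ are additive in each variable and, by definition, every preprojective (resp. preinjective, resp. regular) representation is a direct sum of indecomposable ones, I may assume $M$, $N$ and $L$ indecomposable throughout.

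First I would record two base cases, valid because $\Rep_Q(k)$ is hereditary. \emph{(i)} If $X$ is indecomposable and not projective and $P$ is projective, then $\Hom(X,P)=0$: a nonzero $f\colon X\to P$ would have image a submodule of $P$, hence projective, so the surjection $X\twoheadrightarrow \im f$ splits; indecomposability of $X$ together with $\im f\neq 0$ forces $X\simeq \im f$ to be projective, a contradiction. \emph{(ii)} Dually, using that quotients of injectives are injective, $\Hom(J,X)=0$ whenever $J$ is injective and $X$ is indecomposable and not injective. These use only the heredity of $\Rep_Q(k)$ together with the facts that projectives are preprojective and injectives are preinjective (immediate from $\tau P=0$, $\tau^- J=0$ and the definitions), so that a regular or preinjective $X$ is never projective and a regular or preprojective $X$ is never injective.

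Next I would turn the two Auslander--Reiten isomorphisms into a shift. Combined, they give the adjunction isomorphism $\Hom(\tau^- N,M)\simeq \Hom(N,\tau M)$ for all $M,N$, and from it the relations $\Hom(A,B)\simeq\Hom(\tau A,\tau B)$ when $A$ has no projective summand, and $\Hom(A,B)\simeq\Hom(\tau^- A,\tau^- B)$ when $B$ has no injective summand. Writing an indecomposable preprojective as $M=\tau^{-r}P$ with $P$ projective and $r\geq 0$ (take $r$ minimal with $\tau^{r+1}M=0$, so that $\tau^r M$ is projective), and an indecomposable preinjective as $N=\tau^s J$ with $J$ injective, I can iterate these relations. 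Because the classes of preprojective, regular and preinjective indecomposables are pairwise disjoint and stable under $\tau$ and $\tau^-$, the required summand conditions hold at every step, and I obtain $\Hom(N,M)\simeq\Hom(\tau^r N,P)$, $\Hom(L,M)\simeq\Hom(\tau^r L,P)$ and $\Hom(N,L)\simeq\Hom(J,\tau^{-s}L)$. The right-hand sides vanish by \emph{(i)} (with $\tau^r N$ preinjective and $\tau^r L$ regular, neither projective) and by \emph{(ii)} (with $\tau^{-s}L$ regular, hence not injective), which establishes the three $\Hom$-vanishings.

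Finally, the three $\Ext^1$-vanishings follow by a single use of $\Ext^1(-,-)^*\simeq\Hom(-,\tau(-))$: indeed $\Ext^1(M,N)^*\simeq\Hom(N,\tau M)$ with $\tau M$ again preprojective (or $\tau M=0$, in which case $\Ext^1(M,N)=0$ since $M$ is then projective), $\Ext^1(M,L)^*\simeq\Hom(L,\tau M)$ with $\tau M$ preprojective, and $\Ext^1(L,N)^*\simeq\Hom(N,\tau L)$ with $\tau L$ regular; in each case the relevant $\Hom$-space has just been shown to vanish. The one delicate point of the argument is the bookkeeping in the iteration step: one must check at each application that the module to which $\tau$ or $\tau^-$ is applied has no projective, resp. injective, direct summand, so that $\tau^-\tau$ and $\tau\tau^-$ act as the identity. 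This is exactly where the disjointness of the three subcategories and the placement of projectives and injectives inside the preprojective and preinjective classes are used.
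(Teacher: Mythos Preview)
Your argument is correct. The paper does not actually prove this proposition: it is stated as part of a recollection of known structure theory for affine quivers, with references to Ringel and to Schiffmann's lecture notes, so there is no ``paper's own proof'' to compare against.

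Your approach is the standard one and all the steps go through. The base cases (i) and (ii) use heredity correctly (submodules of projectives are projective, quotients of injectives are injective). The shift identities $\Hom(A,B)\simeq\Hom(\tau A,\tau B)$ for $A$ without projective summand and the dual version follow, as you say, from combining the two Serre duality isomorphisms in the paper with $\tau^-\tau\simeq\id$ and $\tau\tau^-\simeq\id$ on the appropriate subcategories. The bookkeeping in the iteration is exactly right: at each step the module to which you apply $\tau$ is preinjective or regular (hence not projective), and the module to which you apply $\tau^-$ is preprojective or regular (hence not injective), so the required cancellations hold. Finally, pulling the $\Ext^1$-vanishings from the $\Hom$-vanishings via $\Ext^1(-,-)^*\simeq\Hom(-,\tau(-))$ is the cleanest way to finish, and you correctly handle the boundary case $\tau M=0$ when $M$ is projective.
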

The simple objects of the abelian category $\Rep_Q^{\RC}(k)$ are called simple regular. A simple regular representation $M$ is called homogeneous if $\tau M\simeq M$.

\begin{theorem}[Ringel, \cite{MR774589}]\label{ringelth}
Let $Q$ be an affine acyclic quiver and $k$ an arbitrary field. Let $d$ and $p_1,\hdots,p_d$ be attached to $Q$ as in the table below. Then
\begin{enumerate}
\item There is a degree preserving bijection $M_a\leftrightarrow a$ between the set of homogeneous regular simple modules and $\mid\PP^1_k\mid\setminus D$ where $D$ consists of $d$ closed points of degree one\footnote{in the sequel for $X$ a scheme, we denote by $\mid X\mid$ the set of its closed points.},
\item There are $d$ $\tau$-orbits $\OO_1,\hdots,\OO_d$ of non-homogeneous regular simple modules of size $p_1,\hdots,p_d$\footnote{\emph{i.e.} the set of isomorphism classes of simple objects in $\OO_j$, $1\leq j\leq d$ is of cardinality $d$ and the Auslander-Reiten translates $\tau$ and $\tau^-$ act as inverse cycles on it.},
\item The category $\Rep_Q^{\RC}(k)$ decomposes as a direct sum of blocks\footnote{There are no morphisms or extensions between the objects of different categories}:
\[
\Rep_Q^{\RC}(k)=\bigsqcup_{a\in\mid\PP^1\mid\setminus D}\CC_{M_a}\sqcup\bigsqcup_{l=1}^d\CC_{\OO_l}
\]
where $\CC_{M_a}$ is the full subcategory of objects which are extensions of $M_a$ and $\CC_{\OO}$ is the full subcategory of $\Rep_Q^{\RC}(k)$ of objects whose regular simple factors lie in $\OO$.
\end{enumerate}

\begin{figure}[h!]
\begin{tabular}{|c|c|c|}
\hline
\text{type of } $Q$ &$d$&$p_1,\hdots,p_d$\\
\hline
$A_1^{(1)}$&$0$& \\
\hline
$A_n^{(1)}, n>1$&$2$& $p_1=$\text{number of arrows going clockwise}\\
                        &  & $p_2=$\text{number of arrows going counterclockwise}\\
                        \hline
$D_n^{(1)}$&$3$&$2, 2, n-2$\\
\hline
$E_{n}^{(1)}, n=6,7,8$&$3$&$2,3,n-3$\\
\hline
\end{tabular}
\caption{Non-homogeneous tubes of affine quivers and their period \cite[(3.18)]{SchiffmannHall}}
\label{tubes}
\end{figure}
\end{theorem}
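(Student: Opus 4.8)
The plan is to reconstruct the uniserial, tubular structure of the regular category from Auslander--Reiten theory, and then to read off the $\PP^1_k$-parametrization from the action of the translate $\tau$ together with the Euler form. Throughout I use that $\tau,\tau^-$ preserve the defect $\partial M=\langle\delta,\dim M\rangle$, so that (the regular indecomposables being exactly those of defect zero) they restrict to mutually inverse autoequivalences of the abelian category $\Rep_Q^{\RC}(k)$, and that Proposition~\ref{extensions} isolates this category from the preprojective and preinjective parts.

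First I would prove that $\Rep_Q^{\RC}(k)$ is \emph{uniserial}: every regular indecomposable has a unique composition series with regular simple factors, and the simples occurring form a contiguous segment of a single $\tau$-orbit. Granting this, each connected component of the stable Auslander--Reiten quiver of $\Rep_Q^{\RC}(k)$ is forced to be a \emph{stable tube} $\Z A_{\infty}/\langle\tau^p\rangle$ of some rank $p\geq 1$, whose $p$ mouth simples form one $\tau$-orbit and generate the component by iterated nonsplit self-extensions. This step, which is the heart of the theorem, rests on the explicit form of the almost split sequences of $Q$ and on the finiteness of regular simples of bounded length; it is where the bulk of Ringel's original argument lives.

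Next I would separate the tubes into homogeneous ($p=1$, i.e.\ $\tau S\simeq S$ for the mouth simple $S$) and non-homogeneous ($p>1$), using the symmetrized Euler form. For a regular simple $S$ one has $(\dim S,\dim S)\geq 0$ with radical $\Z\delta$, so homogeneity is equivalent to $\dim S=\delta$, whereas for a tube of rank $p$ the classes of its $p$ mouth simples are positive real roots summing to $\delta$, cyclically permuted by the Coxeter transformation $c$ (which fixes $\delta$ and satisfies $\dim\tau M=c(\dim M)$ on regular modules). The number $d$ of non-homogeneous tubes and their periods $p_1,\dots,p_d$ are then read off from the cyclotomic factorization of the characteristic polynomial of $c$ --- consistently with $\sum_j(p_j-1)=\abs{I}-2$ --- giving for each affine type the values tabulated, e.g.\ $(p_1,p_2,p_3)=(2,2,n-2)$ with $d=3$ in type $D_n^{(1)}$.

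Finally I would establish the bijection in (1) and the block decomposition in (3). Over $\bar k$ the homogeneous regular simples, all of dimension $\delta$, are classified by a rational one-parameter family: a tilting or perpendicular-category reduction to the Kronecker quiver $A_1^{(1)}$, whose regular simples of dimension $\delta=(1,1)$ are the pencils $[a:b]$, identifies them with $\PP^1_{\bar k}$, and the finitely many omitted points are exactly the feet of the non-homogeneous tubes. The decomposition in (3) is then immediate, since distinct $\tau$-orbits of regular simples sit in orthogonal tubes and so admit no maps or extensions between them. The main obstacle is the descent of this picture to a \emph{degree-preserving} bijection with $\mid\PP^1_k\mid\setminus D$ over the non-algebraically closed field $k$: one must match the degree of a closed point of $\PP^1_k$ with the dimension over $k$ of the endomorphism field of the corresponding homogeneous simple, by analysing the Galois action on the $\bar k$-parametrization, and verify that all exceptional points (the set $D$) have degree one. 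This field-of-definition bookkeeping, rather than the combinatorial tube classification, is the delicate point.
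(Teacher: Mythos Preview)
The paper does not give its own proof of this theorem: it is stated as a background result attributed to Ringel, with the reader referred to \cite{MR774589} and to the expositions in \cite[Section~3.6]{SchiffmannHall} and \cite[\S 8]{CBlec}. There is therefore nothing in the paper to compare your proposal against. Your outline is a reasonable high-level sketch of the standard argument found in those references (uniseriality of the regular category, tube decomposition via the stable Auslander--Reiten quiver, identification of the homogeneous parameter space through reduction to the Kronecker quiver), and you are right that the descent to an arbitrary field $k$ and the matching of degrees is the point requiring care; but since the paper simply quotes the result, no further comparison is possible.
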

In Theorem \ref{ringelth}, the subcategories $\CC_{\OO_l}$ are called the non-homogeneous tubes while the subcategories $\CC_{M_a}$ are the homogeneous tubes. The number of non-homogeneous tubes is $d$ (see however Remark \ref{rem}) and the integers $p_1,\hdots,p_d$ are the periods. They do not depend on the chosen field. For $a\in\mid\PP_Q^1\mid$, $\CC_{Q}^a$ also denotes the corresponding tube.
\begin{remark}\label{rem}
In type $A_n^{(1)}$, in the case where all arrows except one go in the same direction, we have in fact $d=1$, \emph{i.e.} there is only one non-homogeneous tube.
\end{remark}

We can furthermore precisely identify the tubes $\CC_{M_a}$ and $\CC_{\OO}$ with the help of the Jordan quiver and of cyclic quivers respectively.

\begin{theorem}
Let $a\in \mid\PP^1_k\mid\setminus D$ a closed point of degree $d$. Let $K=\End(M_a)$ the $k$-algebra of endomorphisms of the  simple regular $M_a$ of the tube $\CC_{M_a}$. This is a finite field extension of $k$ of degree $d$. There exists a unique equivalence of categories
\[
\begin{matrix}
F_a : &\Rep_J(K)&\rightarrow &\CC_{M_a}\\
&I_{(1)}&\rightarrow &M_a.
\end{matrix}
\]
We set $I_{\lambda}^Q(a):=F_a(J_{\lambda})$.

\medskip

Let $a\in D$ be a closed point corresponding to a non-homogeneous tube. Let $p$ be the corresponding period of the non-homogeneous tube $\OO_a$ and $S$ a simple regular of $\OO_a$. There is a unique equivalence of categories
\[
\begin{matrix}
F_a : &\Rep_{C_p}(k)&\rightarrow&\CC_{\OO_a}\\
&S_i&\rightarrow&\tau^{i}S.
\end{matrix}
\]
\end{theorem}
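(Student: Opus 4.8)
The plan is to identify each tube with a category of finite-length modules over a complete Nakayama algebra, and then to recognise that algebra as the completed path algebra of $J$ over $K$ in the homogeneous case, and of $C_p$ over $k$ in the non-homogeneous case. Since each tube is a length category with finitely many simple objects, whereas $\Rep_J(K)$ and $\Rep_{C_p}(k)$ are not, the source I would really work with is the nilpotent block: $\Rep_J^{\nil}(K)$, which contains $I_{(1)}=J_1$, respectively $\Rep_{C_p}^{\nil}(k)=\Rep_{C_p}^0(k)$, which contains the $S_i$. The homogeneous tube is just the period-one case, so I would run both cases in parallel and collect the ingredients in the order below.

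First I would pin down the local data of the simple objects. Since $M_a$ is simple in the abelian category $\Rep_Q^{\RC}(k)$, Schur's lemma makes $K=\End(M_a)$ a division ring, necessarily a field by Wedderburn as it is finite, and the degree-preserving bijection $M_a\leftrightarrow a$ of Theorem \ref{ringelth} identifies $[K:k]$ with $\deg a=d$. The self-extensions follow from the Serre adjunction: as $M_a$ is homogeneous, $\tau M_a\simeq M_a$, so
\[
 \Ext^1(M_a,M_a)^{*}\simeq \Hom(M_a,\tau M_a)=\End(M_a)=K,
\]
whence $\Ext^1(M_a,M_a)$ is one-dimensional over $K$; this is exactly the extension datum of the simple nilpotent module of the one-loop quiver $J$ over $K$. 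For a non-homogeneous tube $\OO_a$ (a point of $D$, hence of degree one) the simple objects form the orbit $S,\tau S,\dots,\tau^{p-1}S$, each with endomorphism field $k$, and the same adjunction gives $\Ext^1(\tau^iS,\tau^jS)^{*}\simeq\Hom(\tau^jS,\tau^{i+1}S)$, which vanishes unless $j\equiv i+1\pmod p$ and equals $k$ otherwise. This reproduces the extension quiver of the nilpotent simples of $C_p$, whose single arrow $i\to i+1$ yields $\Ext^1(S_i,S_{i+1})\simeq k$ by the Euler form.

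Next I would use these computations to classify objects. Because $\Ext^1(M_a,M_a)$ is free of rank one over $K$, the category of objects whose composition factors are all $M_a$ is uniserial: up to isomorphism there is a unique indecomposable $M_a^{[n]}$ of regular length $n$ for each $n\geq 1$, with $M_a^{[1]}=M_a$, and these fit into a tower of canonical epimorphisms $\cdots\twoheadrightarrow M_a^{[2]}\twoheadrightarrow M_a^{[1]}$ with inverse limit having endomorphism ring $K[[t]]$; this exhibits $\CC_{M_a}$ as the category of finite-length $K[[t]]$-modules, that is, as $\Rep_J^{\nil}(K)$. Setting $F_a(J_n)=M_a^{[n]}$, so $F_a(J_1)=M_a$, and extending by $K$-linearity and direct sums produces the exact equivalence. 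The identical uniserial analysis in the non-homogeneous case shows the indecomposables of $\CC_{\OO_a}$ to be the uniserial objects with prescribed top $\tau^iS$ and length $l+1$, matching the $V_{i,l}$, and exhibits $\CC_{\OO_a}$ as finite-length modules over the completed path algebra $\widehat{kC_p}$, i.e. as $\Rep_{C_p}^{\nil}(k)$; then $S_i\mapsto\tau^iS$ extends to the equivalence. In both cases the functor is forced on objects by length and top and on morphisms by exactness, giving uniqueness up to natural isomorphism.

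The main obstacle is the passage from this bijective matching of indecomposables to a genuine exact equivalence: one must produce an honest functor and verify that it induces isomorphisms on all $\Hom$- and $\Ext^1$-spaces, i.e. that the tube is \emph{standard}. The completion argument above is what I would use to discharge this, reducing the comparison of morphism spaces to the transparent module category over $K[[t]]$ (respectively $\widehat{kC_p}$), where fullness and faithfulness are immediate. Two delicate points remain to be checked: in the homogeneous case, that the $K$-bimodule $\Ext^1(M_a,M_a)$ is \emph{untwisted}, so that the completed endomorphism ring is $K[[t]]$ rather than a skew power series ring and the target is genuinely $\Rep_J^{\nil}(K)$; and in the non-homogeneous case, that the Serre adjunction orients the unique nonzero extension from $\tau^iS$ towards $\tau^{i+1}S$ in the direction of the arrow $i\to i+1$ of $C_p$, so that the assignment $S_i\mapsto\tau^iS$ (and not its inverse) is the one compatible with the two abelian structures.
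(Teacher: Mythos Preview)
The paper does not give a proof of this theorem at all: it is part of the recollection of known structural facts about affine quivers in Section~\ref{2}, stated without argument and attributed implicitly to the references \cite{MR774589}, \cite[\S3.6]{SchiffmannHall}, \cite[\S8]{CBlec}. So there is no ``paper's own proof'' to compare against.

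That said, your sketch is essentially the standard argument and is correct in outline. You rightly observe that the statement, read literally, cannot be true: $\Rep_J(K)$ and $\Rep_{C_p}(k)$ have infinitely many simple objects while each tube has one (respectively $p$), so the intended source categories must be the nilpotent blocks $\Rep_J^{\nil}(K)$ and $\Rep_{C_p}^{\nil}(k)$. The computation of $\Hom$ and $\Ext^1$ between simples via the Auslander--Reiten formula is correct, the uniserial classification of indecomposables in a tube is the right mechanism, and realising the tube as finite-length modules over $K[[t]]$ (resp.\ the completed path algebra $\widehat{kC_p}$) is exactly how one upgrades the bijection on objects to an honest exact equivalence.

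The two caveats you flag are real and are the only places where your sketch is not yet a proof. For the orientation in the non-homogeneous case, your computation $\Ext^1(\tau^iS,\tau^{i+1}S)\simeq k$ matches $\Ext^1(S_i,S_{i+1})\simeq k$ in $\Rep_{C_p}^{\nil}(k)$ (using the arrow $i\to i+1$ and the Euler form), so $S_i\mapsto\tau^iS$ is indeed the correct assignment; this is easily completed. The untwistedness of the $K$-bimodule $\Ext^1(M_a,M_a)$ in the homogeneous case is genuinely the crux: if it were twisted by a nontrivial $\sigma\in\Gal(K/k)$ the target would be $\Rep_J^{\nil}$ over a twisted form, not over $K$. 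One clean way to discharge this is to first do the Kronecker quiver, where the homogeneous tubes are explicitly $\Rep_J^{\nil}(\F_{q^{\deg a}})$ by the classification in Section~\ref{Kronecker}, and then transport along the fully faithful exact functor $F$ of \eqref{FunctorF}, which identifies each homogeneous tube of $Q$ with a homogeneous tube of $K_2$ (Proposition~\ref{indtubes} and the paragraph following it). This avoids a direct bimodule computation and is closer in spirit to how the paper actually uses the result.
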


In the case of a non-homogeneous tube, there is a reasonable way to choose the simple representation $S$. For this, we may first choose an extending vertex $i_0$ of $Q$. Then, since $\sum_{s=0}^p\dim\tau^sS=\delta$ and $\delta_{i_0}=1$, there is a unique simple representation $S$ in the non-homogeneous tube which is nonzero at vertex $i_0$. The isomorphism class of the representation $S$ may however depend on the extending vertex we choose.

\subsection{Identification of the tubes with the help of the Kronecker quiver}
\subsubsection{Representations of the Kronecker quiver}\label{Kronecker}
We recall here the classification of indecomposables of the Kronecker quiver over a finite field $\F_q$ for the sake of completeness. It can be obtained from the classification over the algebraic closure $\xoverline{\F}_q$ using Galois arguments. For complements, see \cite[Section 7.7]{KirQuiv}.

The Kronecker quiver has two vertices connected by two arrows going in the same direction :
\[
K_2 : \quad\begin{tikzcd}
1 \arrow[r,shift left,"\alpha"] \arrow[r,shift right,swap,"\beta"] & 2,
\end{tikzcd}
\]
We fix a finite field with $q$ elements $\F_q$ and an algebraic closure $\xoverline{\F}_q$ of $\F_q$.

\begin{theorem}[{\cite[Theorem 7.30]{KirQuiv}}]\ 
\begin{enumerate}
\item The set of real roots is $\{(n,n+1) : n\in \N\}\cup \{(n+1,n) : n\in\N\}$. For $n\in \N$, the indecomposable representation of $K_2$ over $\F_q$ of dimension $(n,n+1)$ is preprojective whereas the indecomposable representation of $K_2$ over $\F_q$ of dimension $(n+1,n)$ is preinjective. They have the following form:
\[
 \begin{tikzcd}
\F_q^n \arrow[r, shift left,"{\begin{pmatrix}
                          I_n\\
                          0
                         \end{pmatrix}
}"] \arrow[r,shift right,swap,"\begin{pmatrix}
                              0\\
                              I_n
                             \end{pmatrix}
"] & \F_q^{n+1},
\end{tikzcd}
\]
for the preprojective representations and
\[
 \begin{tikzcd}
\F_q^{n+1} \arrow[r,shift left,"\begin{pmatrix}
                          I_n \quad0                
                         \end{pmatrix}
"] \arrow[r,shift right,swap,"\begin{pmatrix}
                              0\quad
                              I_n
                             \end{pmatrix}
"] & \F_q^{n},
\end{tikzcd}
\]
for the preinjective representations, where $I_n$ is the $n\times n$ identity matrix.

\item The regular indecomposable representations of $K_2$ over $\F_q$ are parametrized by the set $\lvert\PP^1_{\F_q}\rvert\times \Z_{\geq 1}$. For $([x:y],n)\in \PP^1_{\F_q}(\xoverline{\F}_q)\times \Z_{\geq 1}$, the corresponding representation of $K_2$ is
\[
 \begin{tikzcd}
\F_{q^{\deg(x)}}^{n} \arrow[r,shift left,"\begin{matrix}
                          xI_n+J_n                
                         \end{matrix}
"] \arrow[r,shift right,swap,"\begin{matrix}
                              y I_n+J_n
                             \end{matrix}
"] & \F_{q^{\deg(x)}}^{n},
\end{tikzcd}
\]
where we consider $\F_{q^{\deg(x)}}$ as a $\F_q$-vector space of dimension $\deg(x)$. The isomorphism class of this representation only depends on the Galois orbit of $[x:y]$. We denote by $I_{t,n}$ where $t=[x:y]$ this representation and when $n=1$, we use the notation $S_t=I_{t,1}$ for $t\in\PP^1_{\F_q}(\xoverline{\F}_q)$. 
\end{enumerate}
\end{theorem}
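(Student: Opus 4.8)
The plan is to reduce everything to the representation theory over the algebraic closure $\xoverline{\F}_q$ and then descend along the Galois action, as the preamble of this section suggests. First I would record the Tits form of $K_2$: with the Euler form $\langle\dd,\dd'\rangle=\dd_1\dd_1'+\dd_2\dd_2'-2\dd_1\dd_2'$ one gets $\langle\dd,\dd\rangle=(\dd_1-\dd_2)^2$, so the positive roots satisfying $\langle\dd,\dd\rangle=1$ are exactly the vectors $(n,n+1)$ and $(n+1,n)$, while $\langle\dd,\dd\rangle\leq 0$ forces $\dd_1=\dd_2$, giving the imaginary direction $\delta=(1,1)$. This already pins down the dimension vectors appearing in each of the two items.

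For item $(1)$, I would treat the explicit preprojective and preinjective families directly. The displayed matrices are visibly injective (resp. surjective) with the stated dimensions; a short computation of the endomorphism ring (equivalently, the absence of a nontrivial decomposition of the identity) shows each is indecomposable, and since over any field a real root carries a unique indecomposable (Kac's theorem, or a direct reflection-functor argument realizing these as $\tau^{-n}$ of the simple projective and $\tau^{n}$ of the simple injective), these are \emph{all} the indecomposables of their dimension. To separate the two families I would compute the defect: $\partial=\langle\delta,(n,n+1)\rangle=-1<0$ and $\partial=\langle\delta,(n+1,n)\rangle=+1>0$, which by the dichotomy recalled before the theorem places the first family in $\Rep_Q^{\PC}$ and the second in $\Rep_Q^{\IC}$. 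Because real-root indecomposables have endomorphism ring $\F_q$ and no moduli, they are rigid and hence unaffected by base change to $\xoverline{\F}_q$; no descent is needed here.

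For item $(2)$, I would first classify the regular indecomposables over $\xoverline{\F}_q$. A regular representation has dimension $(n,n)$, i.e. a pair of maps $\xoverline{\F}_q^{\,n}\rightrightarrows\xoverline{\F}_q^{\,n}$ up to the $\GL_n\times\GL_n$ action; the classical theory of matrix pencils reduces an indecomposable with one of the two maps invertible to the conjugacy class of a single endomorphism, whose indecomposable blocks are the Jordan blocks $xI_n+J_n$ with $x\in\A^1(\xoverline{\F}_q)$, the remaining case (the other map invertible) supplying the point $[1:0]$. Hence over $\xoverline{\F}_q$ the regular indecomposables are parametrized by $\PP^1(\xoverline{\F}_q)\times\Z_{\geq1}$, recovering the matrices $xI_n+J_n$ and $yI_n+J_n$ when $\xoverline{\F}_q=\F_q$.

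The heart of the argument, and the step I expect to be the main obstacle, is the Galois descent from $\xoverline{\F}_q$ to $\F_q$. Let $\Gal(\xoverline{\F}_q/\F_q)$ act on representations by the Frobenius twist of the matrix entries; this permutes the parameters by $([x:y],n)\mapsto([x^q:y^q],n)$, so the orbits are exactly the pairs $(t,n)$ with $t\in\lvert\PP^1_{\F_q}\rvert$ a closed point and $n\geq1$. I would then invoke the standard descent dictionary: an $\F_q$-indecomposable has local endomorphism ring, whose residue field is by Wedderburn some $\F_{q^d}$, and its base change decomposes as the sum over a single Galois orbit of $\xoverline{\F}_q$-indecomposables, each occurring with multiplicity one for the regular blocks; conversely every Galois orbit descends to a unique $\F_q$-indecomposable. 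Finally I would check that the displayed representation attached to a closed point $t=[x:y]$ of degree $d$ realizes this descent: using $\F_{q^d}\otimes_{\F_q}\xoverline{\F}_q\simeq\prod_{\sigma}\xoverline{\F}_q$ indexed by the $d$ embeddings, that is by the Galois conjugates of $x$, the base change of the $\F_q$-representation on $\F_{q^d}^{\,n}$ breaks up precisely into the sum of the $I_{x_i,n}$ over the orbit, confirming indecomposability over $\F_q$, dependence only on the Galois orbit of $[x:y]$, and the matching of dimensions $(dn,dn)$. The delicate points are establishing the multiplicity-one statement and verifying that the explicit matrices $xI_n+J_n$ over $\F_{q^d}$, read $\F_q$-linearly, base-change to the claimed orbit of Jordan blocks rather than to some other regular module.
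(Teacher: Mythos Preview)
The paper does not give its own proof of this theorem: it is quoted as \cite[Theorem 7.30]{KirQuiv}, with only the remark that ``it can be obtained from the classification over the algebraic closure $\xoverline{\F}_q$ using Galois arguments.'' Your proposal is exactly an implementation of that sentence --- compute the Tits form to identify the root types, write down the explicit real-root indecomposables and check their defect, classify the regular blocks over $\xoverline{\F}_q$ via matrix pencils/Jordan normal form, and then descend along $\Gal(\xoverline{\F}_q/\F_q)$ --- so it matches both the intended strategy and the reference. The outline is correct; the only place requiring genuine care, as you yourself flag, is the multiplicity-one descent statement (an $\F_q$-indecomposable with residue field $\F_{q^d}$ base-changes to a single Galois orbit of absolutely indecomposables, each with multiplicity one), but this is the standard Noether--Deuring/Galois-descent argument for quiver representations and is treated in the cited section of Kirillov.
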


For $t\in\PP^1_{\F_q}(\xoverline{\F}_q)$ and $\lambda=(\lambda_1,\lambda_2,\hdots)$ a partition, define $I_{t,\lambda}=\oplus_{i=1}^{l(\lambda)}I_{t,\lambda_i}$. The isomorphism class $[I_{t,\lambda}]$ only depends on the image $a$ of $t : \Spec\xoverline{\F}_q\rightarrow \PP^1_{\F_q}$. For any $a\in\lvert\PP^1_{\F_q}\rvert$, we fix $t_a\in\PP^1_{\F_q}(\xoverline{\F}_q)$ a geometric point such that the image of $t : \Spec\xoverline{\F}_q\rightarrow \PP^1_{\F_q}$ is $a$. The set of isomorphism classes of regular representations of $K_2$ over $\F_q$ is
\[
\M^{K_2}_{\F_q}=\{[I_{t_a,\lambda}] : a\in\lvert\PP^1_{\F_q}\rvert, \lambda \text{ a partition}\}.
\]
For $a\in\lvert\PP^1_{\F_q}\rvert$, write $I_{a,\lambda}=I_{t_a,\lambda}$ and $S_a=I_{a,(1)}$. For any $r\geq 1$, the number $I_{K_2,(r,r)}(q)$ of indecomposable representations of the Kronecker quiver of dimension $(r,r)$ can be determined from this:
\[
 I_{K_2,(r,r)}(q)=\text{number of closed points of $\PP^1_{\F_q}$ of degree dividing $r$}.
\]

This description also gives a natural way to identify the tubes of the Kronecker quiver :
\begin{equation}\label{Ktubes}
\begin{matrix}
C : &\lvert\PP^1_{\F_q}\rvert&\rightarrow &\{\text{tubes of $K_2$ over $\F_q$}\}\\
     &a&\mapsto&C_a=\{I_{t_a,\lambda} : \lambda \text{ partition}\}
\end{matrix}.
\end{equation}

\subsubsection{The number of automorphisms of regular representations of the Kronecker quiver}
For $q$ a power of a prime and $\lambda=(1^{l_1},2^{l_2},\hdots)$ a partition, define:
\[
 a_{\lambda}(q)=q^{|\lambda|+2n(\lambda)}\prod_i\prod_{j=1}^{l_i}(1-q^{-j}).
\]
This is the number of automorphism of the nilpotent representation of type $\lambda$ $J_{\lambda}$ of the Jordan quiver over $\F_q$ (see \cite[Lemma 2.8]{SchiffmannHall} or \cite[Chapter II, (1.6)]{macdonald}). The identification of the tubes of the Kronecker quiver with nilpotent representations of the Jordan quiver gives the following.

\begin{proposition}\label{automorphisms}
 Let $a\in\lvert\PP^1_{\F_q}\rvert$ and $n \geq 1$. Then the number of automorphism of $I^{K_2}_{t_a,\lambda}$ is $a_{\lambda}(q^{\deg(a)})$.
\end{proposition}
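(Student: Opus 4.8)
The plan is to transport the computation through the equivalence of categories identifying the homogeneous tube with representations of the Jordan quiver, thereby reducing everything to the already-known count $a_{\lambda}$.

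First I would recall, from the equivalence $F_a \colon \Rep_J(K) \to \CC_{M_a}$ of the previous theorem, that $K = \End(M_a)$ is the extension of $\F_q$ of degree $\deg(a)$, so that $K \simeq \F_{q^{\deg(a)}}$, and that by definition $F_a(J_{\lambda}) = I^{K_2}_{\lambda}(a)$. The first task is to check that this abstractly defined object coincides with the explicit Kronecker representation $I^{K_2}_{t_a,\lambda}$ of Section \ref{Kronecker}. Since $F_a$ is additive and $J_{\lambda} = \bigoplus_j J_{(\lambda_j)}$, it suffices to match the indecomposables: $F_a(J_{(n)})$ is the unique indecomposable object of regular length $n$ in the homogeneous tube $\CC_{M_a}$ (with simple regular $M_a = S_a$), and by the explicit classification of regular indecomposables of $K_2$ this is precisely $I^{K_2}_{t_a,n}$, of $\F_q$-dimension $(n\deg(a), n\deg(a))$. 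Hence $F_a(J_{\lambda}) = I^{K_2}_{t_a,\lambda}$.

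Second, since any equivalence of categories induces a bijection on Hom-sets and in particular a ring isomorphism on endomorphism algebras, one gets $\End(I^{K_2}_{t_a,\lambda}) \simeq \End(J_{\lambda})$; passing to units yields a bijection between $\Aut(I^{K_2}_{t_a,\lambda})$ and the group of automorphisms of $J_{\lambda}$ computed in $\Rep_J(K)$, i.e. the $K$-linear automorphisms of $J_{\lambda}$. In particular these two finite sets have equal cardinality.

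Finally, I would invoke the formula recalled just above the statement: the number of automorphisms of the nilpotent Jordan representation $J_{\lambda}$ over a finite field with $Q$ elements equals $a_{\lambda}(Q)$. Applying this over $K$, for which $Q = \lvert K\rvert = q^{\deg(a)}$, gives $a_{\lambda}(q^{\deg(a)})$ automorphisms of $J_{\lambda}$, and combining with the bijection of the previous step proves the claim. The only non-formal point, hence the main obstacle, is the identification $F_a(J_{\lambda}) = I^{K_2}_{t_a,\lambda}$: one must verify that the explicit parametrization of the regular indecomposables of $K_2$ agrees on indecomposables with the abstract tube equivalence $F_a$. The remaining steps are purely formal (equivalences preserve endomorphism rings, units correspond to units) or a direct substitution $q \mapsto q^{\deg(a)}$ into the established formula for $a_{\lambda}$.
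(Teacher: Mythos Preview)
Your argument is correct and follows exactly the route the paper indicates: the paper's ``proof'' is just the single sentence preceding the proposition, namely that the identification of the Kronecker tubes with nilpotent representations of the Jordan quiver over $K=\F_{q^{\deg(a)}}$ immediately yields the claim. You have merely spelled out the three formal steps (matching $F_a(J_\lambda)$ with the explicit $I^{K_2}_{t_a,\lambda}$, transporting $\End$ and hence $\Aut$ through the equivalence, and substituting $q\mapsto q^{\deg(a)}$ in the known formula for $a_\lambda$), which the paper leaves implicit.
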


\subsubsection{Regular representations of an affine quiver}

Let $Q$ be an arbitrary acyclic affine quiver. Ler $i_0$ be an extending vertex of $Q$ and $\delta$ the indivisible imaginary root of $Q$. We can restrict ourselves to the case where $i_0$ is a sink, since it is always possible to choose the extending vertex to be a sink or a source, and the dualization is a way to pass from the second case to the first. Without referring to the dualization process, it is possible to adapt this section in the case where the extending vertex $i_0$ is a source. By the classification above, all tubes are homogeneous for the Kronecker quiver $K_2$, \emph{i.e.} in type $A_1^{(1)}$ with the noncyclic orientation.
\medskip

Let $Q'=(I',\Omega')$ be the finite type quiver associated to $Q$ by erasing the vertex $i_0$ and all arrows adjacent to it. The element $\theta=\delta-e_{i_0}$ is an element of $\N^{I'}$. We denote by $I_{\theta}$ an indecomposable representation of $Q$ of dimension $\theta$. Thanks to the classification of affine quivers there is either one vertex $i_1$ adjacent to $i_0$ (in types $D_n^{(1)}$ ($n\geq 4$) and $E_n^{(1)}, n=6,7,8$), in which case $\delta_{i_1}=2$, or two vertices $i_1, i_2$ adjacent to $i_0$ (in types $A_n^{(1)}$, $n>1$), in which case, $\delta_{i_1}=\delta_{i_2}=1$. In the first case, we choose an arbitrary identification $I_{i_1}\simeq k^2$. We have a functor
\begin{equation}\label{FunctorF}
\begin{matrix}
F : &\Rep_{K_2}(k)&\rightarrow& \Rep_{Q}(k)\\
     &(V_0,V_1,\alpha,\beta)&\mapsto&V
\end{matrix}
\end{equation}
where $V$ is defined as follows. The restriction to the subquiver $Q'$ is $V_{Q'}=I_{\theta}\otimes V_0$, $V_{i_0}=V_1$, and if $i_0$ is connected by two arrows $i_1\rightarrow i_0$ and $i_2\rightarrow i_0$ to $Q'$, then we choose $\alpha$ for the map $V_{i_1}=V_0\rightarrow V_{i_0}=V_1$ and $\beta$ for the map $V_{i_2}=V_0\rightarrow V_{i_0}=V_1$. If $i_0$ is connected to $Q'$ by a single arrow $i_1\rightarrow i_0$, then the map $V_{i_1}\simeq V_0\oplus V_0\rightarrow V_{i_0}$ is choosen to be $\alpha\oplus \beta$. Of course this functor depends in the first case on the choosen order $i_1, i_2$ of the vertices connected to $i_0$ and in the second case on the identification $V_{i_1}\simeq k^2$. We implicitly fix such a choice. The action of $F$ on the morphisms is as follows. Let $V=(V_0,V_1,\alpha,\beta)$ and $V'=(V'_0,V'_1,\alpha',\beta')$ be two representations of $K_2$ and $f : V\rightarrow V'$ a morphism between them. The linear map $f_0 : V_0\rightarrow V'_0$ induces a morphism of representations of $Q'$, $\id\otimes f_0 : I_{\theta}\otimes V_0\rightarrow I_{\theta}\otimes V'_0$ and considering $f_1 : V_1=F(V)_{i_0}\rightarrow V'_1=F(V')_{i_0}$ at the vertex $i_0$, we obtain a morphism of representations $F(f) : F(V)\rightarrow F(V')$. Note that $F(S_1)=I_{\theta}$ and $F(S_2)=S_{i_0}$.
\begin{proposition}[{\cite[Theorem 7.34]{KirQuiv}}]
The functor $F$ is exact and fully faithful.
\end{proposition}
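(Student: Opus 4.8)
The plan is to check the three properties in increasing order of difficulty: exactness is purely formal and is verified vertex by vertex, faithfulness is immediate, and fullness is the only substantial point, which I would reduce to the single fact that the $Q'$-representation $I_{\theta}$ is a \emph{brick}, i.e. $\End_{Q'}(I_{\theta})=k$.

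For exactness, recall that a sequence in $\Rep_Q(k)$ is exact if and only if it is exact at every vertex. By construction the composite of $F$ with evaluation at a vertex $i\in I'$ is the functor $V=(V_0,V_1,\alpha,\beta)\mapsto (I_{\theta})_i\otimes V_0$, which factors as the exact projection $V\mapsto V_0$ followed by the exact functor $(I_{\theta})_i\otimes(-)$ of tensoring with a fixed finite-dimensional $k$-vector space; evaluation of $F$ at $i_0$ is just the exact projection $V\mapsto V_1$. Hence $F$ carries short exact sequences to short exact sequences, so it is exact. For faithfulness, note that $F(f)$ has components $\id_{I_{\theta}}\otimes f_0$ on $Q'$ and $f_1$ at $i_0$; if all of these vanish then $f_1=0$ and, since $I_{\theta}\neq 0$, also $f_0=0$, so $f=0$.

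Fullness is the crux. Let $g\colon F(V)\to F(V')$ be a morphism in $\Rep_Q(k)$. Its restriction to $Q'$ is a morphism $g_{Q'}\colon I_{\theta}\otimes V_0\to I_{\theta}\otimes V_0'$ of $Q'$-representations, and its value at $i_0$ is a linear map $g_{i_0}\colon V_1\to V_1'$; set $f_1=g_{i_0}$. Writing $I_{\theta}\otimes V_0\simeq I_{\theta}^{\oplus\dim V_0}$, one has a natural identification
\[
 \Hom_{Q'}(I_{\theta}\otimes V_0,I_{\theta}\otimes V_0')\simeq \End_{Q'}(I_{\theta})\otimes_k\Hom_k(V_0,V_0').
\]
Granting $\End_{Q'}(I_{\theta})=k$, the right-hand side is $\Hom_k(V_0,V_0')$ and the identification sends $f_0$ to $\id_{I_{\theta}}\otimes f_0$; thus there is a unique $f_0\colon V_0\to V_0'$ with $g_{Q'}=\id_{I_{\theta}}\otimes f_0$. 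It remains to see that the pair $(f_0,f_1)$ is a morphism in $\Rep_{K_2}(k)$, i.e. that $f_1\alpha=\alpha'f_0$ and $f_1\beta=\beta'f_0$. These are precisely the compatibility relations of $g$ with the structure maps of $F(V)$ and $F(V')$ along the arrows adjacent to $i_0$: in type $A_n^{(1)}$ the two arrows $i_1\to i_0$ and $i_2\to i_0$ carry $\alpha$ and $\beta$, and since $g_{i_1}=g_{i_2}=f_0$ they yield the two relations directly; in types $D_n^{(1)}$ and $E_n^{(1)}$ the single arrow carries $\alpha\oplus\beta$ while $g_{i_1}=f_0\oplus f_0$ acts diagonally, so the one compatibility relation splits into the same two equalities. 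Hence $(f_0,f_1)$ is a morphism of $K_2$-representations and $F(f_0,f_1)=g$ by construction, which proves fullness.

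The main obstacle is therefore the brick property $\End_{Q'}(I_{\theta})=k$. Here I would use that $Q'$ is a Dynkin (finite type) quiver and that $\theta=\delta-e_{i_0}$ is the highest root of its root system: the coordinates of $\delta$ are the marks of the affine diagram and the mark at the extending vertex equals $\delta_{i_0}=1$, so deleting $i_0$ turns $\delta$ into the highest root $\theta$ of the underlying finite root system. By Gabriel's theorem $I_{\theta}$ is then the unique indecomposable representation of $Q'$ of dimension $\theta$, and being the indecomposable attached to a real root of a Dynkin quiver it is exceptional, in particular rigid, so $\Ext^1_{Q'}(I_{\theta},I_{\theta})=0$. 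The Euler form then gives $\dim_k\End_{Q'}(I_{\theta})-\dim_k\Ext^1_{Q'}(I_{\theta},I_{\theta})=\langle\theta,\theta\rangle=1$, whence $\End_{Q'}(I_{\theta})=k$. This is the one place where the classification of affine quivers, via the structure of $Q'$ as a finite-type diagram, enters in an essential way.
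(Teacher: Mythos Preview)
Your proof is correct and follows essentially the same strategy as the paper: exactness checked vertexwise, fullness reduced to the brick property $\End_{Q'}(I_{\theta})=k$, and faithfulness immediate. The paper's argument is considerably terser---it simply asserts that $(f_0,f_1)$ is a morphism of $K_2$-representations and cites the brick property as a known fact for indecomposables of finite type quivers---whereas you explicitly verify the compatibility relations along the arrows adjacent to $i_0$ in both the $A$ and $D/E$ cases and derive the brick property from the Euler form computation $\langle\theta,\theta\rangle=1$ and rigidity of exceptional representations.
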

\begin{proof}
For quiver representations, exactness of a sequence can be checked pointwise (\emph{i.e.} at each vertex). Here, tensor products are over a field so exactness of $F$ is immediate from its definition.
\medskip

For the full faithfulness, we use that $I_{\theta}$, being an indecomposable of $Q'$ which is a finite type quiver, is a brick, \emph{i.e.} $\End_{kQ'}(I_{\theta},I_{\theta})\simeq k$. Let $V=(V_0,V_1,\alpha,\beta)$ and $V'=(V'_0,V'_1,\alpha',\beta')$ be two representations of $K_2$ and $g : F(V)\rightarrow F(V')$ a morphism of representations. We obtain for free a linear map $f_1=g_{i_0} : V_1\rightarrow V'_1$. Because $I_{\theta}$ is a brick, the restricted morphism of representations $g_{Q'} : I_{\theta}\otimes V_0\rightarrow I_{\theta}\otimes V'_0$ is induced by a unique linear map $f_1 : V_0\rightarrow V'_0$. The datum $(f_0,f_1) : V\rightarrow V'$ is a morphism of representations such that $F(f_0,f_1)=g$. Faithfulness is immediate. This concludes the proof.
\end{proof}

\begin{proposition}\label{ess}
 The image of the functor $F$ is the full subcategory of $\Rep_{Q}(\F_q)$ whose objects are extensions of $I_{\theta}^{\oplus d_1}$ by $S_{i_0}^{\oplus d_2}$ for some $d_1, d_2\geq 0$.
\end{proposition}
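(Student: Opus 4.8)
The plan is to establish the two inclusions separately, using that $F$ is exact, as just shown, and additive, the latter being clear from the construction since $F(V)$ depends additively on the Kronecker datum $(V_0,V_1,\alpha,\beta)$. For the inclusion of the image of $F$ into the claimed subcategory, write $S_1,S_2$ for the two simple representations of $K_2$. Since $i_0$ is a sink, the structure maps of a Kronecker representation go $V_0\to V_1$, so the part supported at the second vertex is always a subrepresentation and the part at the first vertex always a quotient; hence every $V=(V_0,V_1,\alpha,\beta)$ with $\dim V_0=d_1$ and $\dim V_1=d_2$ sits in a canonical short exact sequence
\[
0\to S_2^{\oplus d_2}\to V\to S_1^{\oplus d_1}\to 0.
\]
Applying $F$ and using $F(S_1)=I_\theta$, $F(S_2)=S_{i_0}$ gives
\[
0\to S_{i_0}^{\oplus d_2}\to F(V)\to I_\theta^{\oplus d_1}\to 0,
\]
so $F(V)$ is an extension of $I_\theta^{\oplus d_1}$ by $S_{i_0}^{\oplus d_2}$.

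Conversely, I would show that every such extension lies in the image. Let $E$ fit in a short exact sequence $0\to S_{i_0}^{\oplus d_2}\to E\to I_\theta^{\oplus d_1}\to 0$. Restriction to the subquiver $Q'$ is exact, and since $S_{i_0}$ is supported at $i_0$ we have $S_{i_0}|_{Q'}=0$; as $\theta\in\N^{I'}$ is supported on $Q'$, restricting the sequence yields an isomorphism $E|_{Q'}\simeq I_\theta^{\oplus d_1}$. Fix one, thereby identifying $E|_{Q'}$ with $I_\theta\otimes V_0$ for $V_0=\F_q^{d_1}$, and set $V_1=E_{i_0}$. It remains to recover $\alpha$ and $\beta$. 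Because $i_0$ is a sink, the only structure maps of $E$ not already prescribed by $E|_{Q'}$ are those of the arrows into $i_0$: in type $A_n^{(1)}$ there are two such arrows from $i_1,i_2$, and since $(I_\theta)_{i_1}=(I_\theta)_{i_2}=\F_q$ the chosen identification turns $E_{i_1}\to E_{i_0}$ and $E_{i_2}\to E_{i_0}$ into maps $\alpha,\beta\colon V_0\to V_1$; in types $D_n^{(1)},E_n^{(1)}$ a single arrow $i_1\to i_0$ has $(I_\theta)_{i_1}=\F_q^2$, and decomposing $E_{i_1}\simeq V_0\oplus V_0$ exhibits the map into $i_0$ as $\alpha\oplus\beta$. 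With $V=(V_0,V_1,\alpha,\beta)$, the fixed identification together with $\id$ at $i_0$ is an isomorphism $E\simeq F(V)$: it intertwines the $Q'$-maps because it already does so on $E|_{Q'}$, and it intertwines the maps into $i_0$ by the very definition of $\alpha,\beta$.

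The main point requiring care is this reverse inclusion, namely the observation that an object of the subcategory is completely determined by its restriction to $Q'$ — forced to be $I_\theta^{\oplus d_1}$ — together with the maps from the neighbours of $i_0$ into the sink, which is precisely the data of a Kronecker representation. The only genuine bookkeeping is matching this reconstruction against the two orientation cases in the definition of $F$; the brick property of $I_\theta$ used above for full faithfulness also ensures that the identification $E|_{Q'}\simeq I_\theta\otimes V_0$ carries no ambiguity beyond the harmless choice of $V_0$. Combining the two inclusions then identifies the essential image of $F$ with the asserted subcategory.
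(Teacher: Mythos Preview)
Your proof is correct and follows essentially the same route as the paper's: both establish the forward inclusion from the canonical extension $0\to S_{i_0}^{\oplus d_2}\to F(V)\to I_\theta^{\oplus d_1}\to 0$ (the paper says this is ``by definition'', you obtain it by applying exactness of $F$ to the analogous Kronecker sequence), and both recover the Kronecker data for the reverse inclusion by restricting to $Q'$ and reading off $\alpha,\beta$ from the arrow(s) into the sink $i_0$, treating the $A_n^{(1)}$ and $D_n^{(1)}/E_n^{(1)}$ cases separately. Your remark that the brick property of $I_\theta$ forces the ambiguity in the identification $E|_{Q'}\simeq I_\theta\otimes V_0$ to lie in $\GL(V_0)$ is a nice touch, making explicit what the paper records as ``the isomorphism class of the so defined representation $V$ does not depend on the various choices made''.
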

\begin{proof}
 This is a straightforward consequence of the definition of $F$. By definition, if $V=(V_0,V_1,\alpha,\beta)$ is a representation of $K_2$, then $F(V)$ is an extension of $I_{\theta}\otimes V_0$ by $S_{i_0}\otimes V_1$. Conversely let $(V', (f_{\alpha})_{\alpha\in\Omega})$ be a representation of $Q$, extension of $I_{\theta}^{\oplus d_1}$ by $S_{i_0}^{\oplus d_2}$ for some $d_1,d_2\geq 0$. We argue separately according to whether $Q=A_n^{(1)}$ or $Q \in\{D_n^{(1)}, E_n^{(1)}\}$. In the first case, we define the representation $V=(V_0,V_1,\alpha,\beta)$ of $K_2$ by setting $V_1=V'_{i_0}$, $V_0=V'_{i_1}$. An isomorphism of the restriction of $V'$ to $Q'$ with $I_{\theta}^{\oplus d_1}$, $\psi : V'_{Q'}\rightarrow I_{\theta}^{\oplus d_1}$ induces an isomorphism $\psi : V'_{i_1}\rightarrow V'_{i_2}$. We define $\alpha = f_{i_1\rightarrow i_0}$ and $\beta = f_{i_2\rightarrow i_0}\circ \psi$. It is easily checked that the isomorphism class of the so defined representation $V$ does not depend on the various choices made and that $F(V)\simeq V'$.
\medskip

In the second case, an isomorphism $V'/S_{i_0}^{d_2}\simeq I_{\theta}^{\oplus d_1}$ gives an identification $\varphi : V'_{i_1}\rightarrow k^2\otimes k^{d_1}$. We then define $V$ by setting $V_0=k^{d_1}$, $V_1=V_{i_0}$ and we define $\alpha=f_{i_0\rightarrow i_2}\circ\varphi^{-1}((1,0)\otimes -)$ and $\beta =f_{i_0\rightarrow i_2}\circ\varphi^{-1}((0,1)\otimes -)$. This defines a representation $V=(V_0,V_1,\alpha,\beta)$ of $K_2$ such that $F(V)\simeq V'$.
\end{proof}

Following the notations of Kirillov in \cite{KirQuiv}, we define $S_a^{Q}=F(S_a)$ for $a\in \lvert\PP^1_{\F_q}\rvert$.
\begin{theorem}[Identification of the simple regular representations, {\cite[Theorem 7.37]{KirQuiv}}]\label{identif}\ 

\begin{enumerate}
\item Let $X$ be a simple regular representation of $Q$ over $\F_q$ of period $1$. Then $\dim X=d\delta$ where $d$ is the degree of the tube containing $X$ and $X\simeq S_a^Q$ for some closed point $a\in\lvert\PP^1_{\F_q}\rvert$.
\item Let $\OO$ be the $\tau$-orbit of a simple regular representation of period $l>1$. Then
\[
\sum_{X\in\OO}\dim X=\delta.
\]
Since at the extending vertex $i_0$, $\delta_{i_0}=1$, $\OO$ contains a unique simple regular representation $X$ such that $X_{i_0}\neq 0$. Moreover, if $X^{(l)}\in\CC_{\OO}$ is the unique indecomposable representation of dimension $\delta$ having $X$ as quotient, then $X^{(l)}\simeq S_a^Q$ for some $a\in\lvert\PP^1_{\F_q}\rvert$.
\end{enumerate}
\end{theorem}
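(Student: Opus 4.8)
The plan is to transport the classification of regular representations of the Kronecker quiver through the functor $F$ of \eqref{FunctorF}, using that $F$ is exact and fully faithful with image described by Proposition \ref{ess}. First I would check that $F$ preserves the defect. Since $F(S_1)=I_\theta$ and $F(S_2)=S_{i_0}$, a representation $V$ of $K_2$ of dimension $(m,n)$ satisfies $\dim F(V)=m\theta+n e_{i_0}$, whence $\partial F(V)=m\langle\delta,\theta\rangle+n\langle\delta,e_{i_0}\rangle$. A direct computation gives $\langle\delta,e_{i_0}\rangle=-1$ (for $i_0$ a sink in each affine type) and $\langle\delta,\delta\rangle=0$, so $\langle\delta,\theta\rangle=1$ and $\partial F(V)=m-n$ equals the defect of $V$ in $K_2$. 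As $F$ is additive and fully faithful it preserves indecomposability, so it restricts to an exact, fully faithful functor $\Rep^{\RC}_{K_2}(\F_q)\to\Rep^{\RC}_Q(\F_q)$. Because tubes are the extension-closed blocks of the regular category and $F$ is exact, $F$ carries each homogeneous Kronecker tube into a single tube of $Q$, sending the simple regular $S_a$ to $S_a^Q=F(S_a)$, an indecomposable regular representation of dimension $\deg(a)\delta$.

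For $(1)$, the heart of the matter is to show that every homogeneous simple regular $X$ lies in the image of $F$. Since $i_0$ is a sink, the part of $X$ supported at $i_0$ is a subrepresentation isomorphic to $S_{i_0}^{\oplus\dim X_{i_0}}$, with quotient the restriction $X|_{Q'}$; by Proposition \ref{ess} it therefore suffices to prove $X|_{Q'}\simeq I_\theta^{\oplus c}$ for some $c$. Granting this, $X=F(V)$ with $V$ regular indecomposable over $K_2$, and simplicity of $X$ forces $V$ to be simple regular: otherwise $V=I_{b,n}$ with $n\geq 2$ would contain the proper regular subobject $I_{b,n-1}$, whose image $F(I_{b,n-1})\subsetneq X$ would be a proper nonzero regular subrepresentation, contradicting simplicity. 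Hence $V=S_b$, $X=S_b^Q$ and $\dim X=\deg(b)\delta=d\delta$. For $(2)$, the same restriction statement applied to the dimension-$\delta$ indecomposable $X^{(l)}$ (whose $i_0$-component has dimension $\delta_{i_0}=1$) yields $X^{(l)}|_{Q'}\simeq I_\theta$ and hence $X^{(l)}=F(S_a)=S_a^Q$ with $\deg(a)=1$. Since $X^{(l)}$ has each of the $l$ simple regulars of $\OO$ as a regular composition factor exactly once, $\dim X^{(l)}=\sum_{X\in\OO}\dim X$, so this sum equals $\delta$, proving the first assertion; the uniqueness of a simple regular nonzero at $i_0$ then follows because $\delta_{i_0}=1$.

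The main obstacle is the restriction lemma $X|_{Q'}\simeq I_\theta^{\oplus c}$. I would prove it by analysing the indecomposable summands $W$ of the $Q'$-representation $X|_{Q'}$. Each such $W$, viewed as a representation of $Q$ supported on $Q'$, is a quotient of the regular representation $X$, so Proposition \ref{extensions} (vanishing of $\Hom$ from a regular to a preprojective) forbids preprojective summands, and the block decomposition of Theorem \ref{ringelth} forbids regular summands of non-full support inside a homogeneous tube; thus every $W$ is either $I_\theta$ or a proper preinjective summand. To exclude the proper summands I would use that $\theta=\delta-e_{i_0}$ is the highest root of the finite-type quiver $Q'$, so $\dim W\leq\theta$ in the dominance order, together with the nonvanishing of $\Ext^1(W,S_{i_0})$ forced by indecomposability of $X$ (computed through the Serre adjunction and $\homm(W,S_{i_0})=0$), which confines each $W$ to the neighbours of $i_0$.

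Finally, I would reduce the homogeneous case to minimal dimension: the endomorphism ring $\End(X)=K$ is a field of degree $d$ over $\F_q$ acting $\F_q$-linearly, making $X$ a representation over $K$ of dimension vector $\delta$, so that $X|_{Q'}\simeq I_\theta^{\oplus d}$ follows from the single statement in dimension $\delta$. There, counting $\Ext^1(W,S_{i_0})\neq 0$ against $\sum_W(\dim W)_{i_1}=\delta_{i_1}$ bounds the number of summands and leaves $X|_{Q'}\simeq I_\theta$ as the only possibility. This reduction to the minimal dimension $\delta$, together with the accompanying root-theoretic bookkeeping at the neighbours of $i_0$, is where the real work lies.
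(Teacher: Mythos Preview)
The paper does not give its own proof of this theorem; it is quoted from \cite[Theorem~7.37]{KirQuiv}, and your outline is indeed the argument one finds there: transport the Kronecker classification through the fully faithful functor $F$ via the ``restriction lemma'' $X|_{Q'}\simeq I_\theta^{\oplus c}$.

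There is, however, a circularity in your write-up that you should repair. In part (1) you reduce to the minimal case by viewing $X$ as a representation over $K=\End(X)$ ``of dimension vector $\delta$'', and in the minimal case your counting step $\sum_W(\dim W)_{i_1}=\delta_{i_1}$ already uses $(\dim X)_{i_1}=\delta_{i_1}$; both presuppose $\dim_{\F_q}X=d\delta$, which is precisely the first assertion of (1). Likewise in (2) you invoke $(\dim X^{(l)})_{i_0}=\delta_{i_0}=1$ before having proved $\sum_{Y\in\OO}\dim Y=\delta$, and then deduce the latter from the former. The standard remedy is to establish these dimension facts \emph{first} and independently of $F$: since $\tau X\simeq X$ (respectively, $\tau$ permutes the simples of $\OO$ cyclically), the vector $\dim X$ (respectively $\sum_{Y\in\OO}\dim Y$) is fixed by the Coxeter transformation on $\Z^I$, hence lies in $\Z\delta$; positivity and a short minimality argument then yield $d\delta$ (respectively $\delta$). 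Once the dimensions are known, your restriction argument goes through.

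One further point: your block argument that ``forbids regular summands of $X|_{Q'}$'' is clean only in the homogeneous case (1), where any regular $W$ in a tube different from that of $X$ satisfies $\Hom(X,W)=0$, and any $W$ in the tube of $X$ has $W_{i_0}\neq 0$. In (2) the indecomposable $X^{(l)}$ sits in a non-homogeneous tube that \emph{does} contain regular simples with vanishing $i_0$-component, so you must also exclude those; this follows because the unique regular top of $X^{(l)}$ is $X$, whence $\Hom(X^{(l)},Y)=0$ for every $Y\in\OO$ with $Y\not\simeq X$.
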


\subsection{Some examples}
We treat here the examples of types $D_4^{(1)}$ and $A_4^{(1)}$ with some particular orientations in dimension $\delta$ to illustrate the previous procedure in the two different cases for which (1) the extending vertex is connected to one vertex and (2) the extending vertex is connected to two vertices.
\subsubsection{The type $D_4^{(1)}$} Let $Q$ be the quiver
\[
\begin{tikzcd}
1\arrow[leftarrow,rd]                                                                                             &                           & 1\\
&2 \arrow[rightarrow,rd] \arrow[ru]&\\
1\arrow[leftarrow,ru]                                                                                                  &                     &1
\end{tikzcd}.
\]
The labels of the vertices are given by the indecomposable imaginary root $\delta$ of $Q$. The extending vertex can be one of the four vertices with a $1$. To fix the notations, we choose the bottom right vertex and call it $i_0$ as before. An indecomposable representation of $Q$ of dimension $\theta=\delta-e_{i_0}$ is 
\[
\begin{tikzcd}
\F_q\arrow[leftarrow,rd]{}{(1,0)}                                                                                        &                           & \F_q\\
&\F_q^2 \arrow[rightarrow,rd]{}{} \arrow[ru,swap]{}{(0,1)}&\\
\F_q\arrow[leftarrow,ru]{}{(1,1)}                                                                                               &                     &0
\end{tikzcd}.
\]
Up to isomorphism, it is unique. Then, isomorphism classes of $(1,1)$-dimensional representations of $K_2$ are parametrized by $\PP^1_{\F_q}(\F_q)$. The functor $F$ induces on isomorphism classes of regular representations of dimension $\delta$ the following map :
\[
\begin{matrix}
F : &\M_{\F_q}^{K_2}[\delta]\simeq \PP^1_{\F_q}(\F_q)&\rightarrow &\M_{\F_q}^Q[\delta]\\
     &[\lambda : \mu]&\mapsto& [S_{(\lambda, \mu)}^Q]
\end{matrix}
\]
where $S_{(\lambda, \mu)}^Q$ is the following representation of Q :
\[
\begin{tikzcd}
\F_q\arrow[leftarrow,rd]{}{(1,0)}                                                                                        &                           & \F_q\\
&\F_q^2 \arrow[rightarrow,rd]{}{(\lambda,\mu)} \arrow[ru,swap]{}{(0,1)}&\\
\F_q\arrow[leftarrow,ru]{}{(1,1)}                                                                                               &                     &\F_q
\end{tikzcd}.
\]
There are three non-homogeneous tubes associated to the parameters $[1:0], [0:1], [1:1]\in\PP^1_{\F_q}(\F_q)$. The other parameters give simple regular representations of dimension $\delta$. We now study one non-homogeneous tube. By symmetry of the quiver, this suffices to determine the three non-homogeneous tubes. Let us choose the non-homogeneous tube associated to $[1:0]$. Thanks to Theorem \ref{identif}, the regular simple representations of the tube are the indecomposables
\[
Y=\begin{tikzcd}
0\arrow[leftarrow,rd]{}{}                                                                                        &                           & \F_q\\
&\F_q \arrow[rightarrow,rd]{}{} \arrow[ru,swap]{}{}&\\
\F_q\arrow[leftarrow,ru]{}{}                                                                                               &                     &0
\end{tikzcd}
\quad
\text{ and }
\quad
X=\begin{tikzcd}
\F_q\arrow[leftarrow,rd]{}{}                                                                                        &                           & 0\\
&\F_q \arrow[rightarrow,rd]{}{} \arrow[ru,swap]{}{}&\\
0\arrow[leftarrow,ru]{}{}                                                                                               &                     &\F_q
\end{tikzcd}.
\]
In this non-homogeneous tube, there are two indecomposables of dimension $\delta$ up to isomorphism: the representation $S_{(1,0)}^Q$ and the unique (up to isomorphism) extension of $Y$ by $X$:
\[
\begin{tikzcd}
\F_q\arrow[leftarrow,rd]{}{(1,1)}                                                                                        &                           & \F_q\\
&\F_q^2 \arrow[rightarrow,rd]{}{(1,0)} \arrow[ru,swap]{}{(0,1)}&\\
\F_q\arrow[leftarrow,ru]{}{(0,1)}                                                                                               &                     &\F_q
\end{tikzcd}.
\]

\subsubsection{The type $A_4^{(1)}$} Let $Q$ be the quiver
\[
\begin{tikzcd}
1\arrow[r] \arrow[d]&1\arrow[d]\\
1\arrow[r]&1
\end{tikzcd}.
\]
The extending vertex is the bottom right vertex, we call it $i_0$ and the indivisible imaginary root $\delta$ is given in the graph above. The indecomposable representation of dimension $\theta=\delta-e_{i_0}$ is
\[
\begin{tikzcd}
\F_q\arrow[r]{}{1} \arrow[d,swap]{}{1}&\F_q\arrow[d]\\
\F_q\arrow[r]& 0
\end{tikzcd}.
\]

The functor $F$ induces on isomorphism classes the following map

\[
\begin{matrix}
F : &\M_{\F_q}^{K_2}[(1,1)]\simeq \PP^1_{\F_q}(\F_q)&\rightarrow &\M_{\F_q}^Q[\delta]\\
     &[\lambda : \mu]&\mapsto& [S_{(\lambda, \mu)}^Q]
\end{matrix}
\]
where $S_{(\lambda, \mu)}^Q$ is the following representation of $Q$:
\[
\begin{tikzcd}
\F_q\arrow[r]{}{1} \arrow[d,swap]{}{1}&\F_q\arrow[d]{}{\mu}\\
\F_q\arrow[r,swap]{}{\lambda}& \F_q
\end{tikzcd}.
\]
The two non-homogeneous tubes correspond to the parameters $[\lambda:\mu]=[0:1]$ and $[\lambda:\mu]=[1:0]$ and both are of period two. Because of the symmetry of the quiver, we study only the case $[\lambda:\mu]=[1:0]$. The two regular simple representations of this tube are
\[
Y=\begin{tikzcd}
0\arrow[r]{}{} \arrow[d]{}{}&\F_q\arrow[d]{}{}\\
0\arrow[r]{}{}& 0
\end{tikzcd}
\quad
\text{ and }
\quad
X=\begin{tikzcd}
\F_q\arrow[r,swap]{}{} \arrow[d,swap]{}{1}&0\arrow[d]{}{}\\
\F_q\arrow[r,swap]{}{1}& \F_q
\end{tikzcd}.
\]
The two indecomposable of dimension $\delta$ in this tube are
\[
\begin{tikzcd}
\F_q\arrow[r]{}{1} \arrow[d,swap]{}{1}&\F_q\arrow[d]{}{0}\\
\F_q\arrow[r,swap]{}{1}& \F_q
\end{tikzcd}
\quad
\text{and}
\quad
\begin{tikzcd}
\F_q\arrow[r]{}{1} \arrow[d,swap]{}{1}&\F_q\arrow[d]{}{1}\\
\F_q\arrow[r,swap]{}{0}& \F_q
\end{tikzcd}.
\]

\subsubsection{Convention}In the sequel, we assume that a functor as in \eqref{FunctorF}:
\[
 F : \Rep_{K_2}(\F_q)\rightarrow \Rep_{Q}(\F_q)
\]
is fixed. The bijection \eqref{Ktubes} gives an explicit bijection
\[
\begin{matrix}
 C_Q : &\lvert\PP^1_{\F_q}\rvert&\rightarrow &\{\text{tubes of $Q$}\}\\
 &a&\mapsto&\text{tube of Q containing $F(C_a)$}.
 \end{matrix}
\]
where $F(C_a)$ is the essential image of the tube $C_a$ of the Kronecker quiver by the functor $F$. We will sometimes also write $C_{Q}^a$ the $a$-tube of $Q$.

\subsubsection{The regular indecomposable representations of an affine quiver}
In the previous theorem, we identified the simple regular representations in the homogeneous tubes. We give now an easy consequence concerning indecomposables in the tubes. We keep the notations of the previous sections.

\begin{proposition}\label{indtubes}
Let $a\in\lvert\PP^1_{\F_q}\rvert$ be a closed point. If the $a$-tube of $Q$ is homogeneous, then for $n\geq 1$, $F(I_{a,n})$ is the unique $n\deg(x)\delta$-dimensional indecomposable representation of this tube.

Suppose the $a$-tube is non-homogeneous. Let $X_a$ be the simple regular representation of $Q$ in this tube which is nonzero at the extending vertex. Then $F(I_{a,n})$ is the indecomposable representation of this tube of dimension $n\delta$ having $X_a$ as quotient.
\end{proposition}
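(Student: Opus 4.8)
The plan is to transport the known classification of regular indecomposables of the Kronecker quiver through the fully faithful exact functor $F$, using the dimension bookkeeping provided by Theorem \ref{identif}. First I would recall that for $K_2$ the regular indecomposables in the tube $C_a$ are exactly the representations $I_{a,n}$ for $n\geq 1$, with $\dim I_{a,n}=(n\deg(a),n\deg(a))$, and that $I_{a,n}$ is the unique indecomposable of $C_a$ of that dimension with $S_a$ as a quotient (being built from $n$ copies of the simple $S_a$ along a single Jordan string). Since $F$ is exact and fully faithful (the proposition preceding Theorem \ref{ringelth}'s identifications), it sends indecomposables to indecomposables and, by Theorem \ref{ringelth}(3) together with the convention defining $C_Q$, identifies the tube $C_a$ of $K_2$ with the $a$-tube of $Q$. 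Thus $F(I_{a,n})$ is an indecomposable object of the $a$-tube of $Q$, and it remains to pin down its dimension and the quotient structure.

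For the \emph{homogeneous} case, I would compute $\dim F(I_{a,n})$ directly from the definition \eqref{FunctorF}: writing $I_{a,n}=(V_0,V_1,\alpha,\beta)$ with $\dim V_0=\dim V_1=n\deg(a)$, the construction gives $F(I_{a,n})_{Q'}=I_\theta\otimes V_0$ and $F(I_{a,n})_{i_0}=V_1$, so $\dim F(I_{a,n})=(\dim V_0)\,\theta+(\dim V_1)\,e_{i_0}=n\deg(a)(\theta+e_{i_0})=n\deg(a)\,\delta$, using $\theta=\delta-e_{i_0}$. Uniqueness of the $n\deg(a)\delta$-dimensional indecomposable in a homogeneous tube is then immediate from the structure of the tube recalled in the equivalence $F_a:\Rep_J(K)\to\CC_{M_a}$: a homogeneous tube has exactly one indecomposable of each multiple of its defining dimension $d\delta$, matching $F_a(J_{(n)})=I^Q_{(n)}(a)$.

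For the \emph{non-homogeneous} case, the same dimension computation is complicated by the fact that $S_a=F(S_{t_a})$ is no longer simple regular in $Q$ but the indecomposable $X^{(l)}$ of dimension $\delta$ described in Theorem \ref{identif}(2). Here I would invoke Theorem \ref{identif}(2) directly: $F(S_a)=S_a^Q\simeq X^{(l)}$ has dimension $\delta$ and has the simple regular $X_a$ (the one nonzero at $i_0$) as a quotient. Applying $F$ to the short exact sequences exhibiting $I_{a,n}$ as an $n$-fold self-extension of $S_{t_a}$ and using exactness of $F$, I get that $F(I_{a,n})$ is a length-$n$ extension with composition factors $F(S_{t_a})=X^{(l)}$, hence $\dim F(I_{a,n})=n\delta$; that it is indecomposable follows from full faithfulness, and that it has $X_a$ as quotient follows from applying $F$ to the surjection $I_{a,n}\twoheadrightarrow S_{t_a}$ and composing with $X^{(l)}\twoheadrightarrow X_a$. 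The main obstacle I anticipate is precisely the non-homogeneous quotient claim: one must verify that the indecomposable of dimension $n\delta$ so produced is the correct one among possibly several indecomposables of that dimension in the tube, i.e. the one with $X_a$ (rather than some other simple regular of the $\tau$-orbit) as its unique top quotient. I would resolve this by tracking the top of $I_{a,n}$ in $C_a$ through $F$ and using that $F(S_{t_a})$ has a well-defined top $X_a$, so that the top of $F(I_{a,n})$ is $X_a$ as required.
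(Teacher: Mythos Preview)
Your proposal is correct and follows essentially the same route as the paper: the paper's proof is a one-liner invoking the full faithfulness of $F$ together with the identification of each tube with nilpotent representations of a Jordan or cyclic quiver, and you have simply unpacked these two ingredients (indecomposability via full faithfulness, the dimension computation via \eqref{FunctorF}, and the quotient/top via Theorem \ref{identif}(2) plus exactness). Your handling of the anticipated obstacle in the non-homogeneous case---composing $F(I_{a,n})\twoheadrightarrow F(S_{t_a})=X^{(l)}\twoheadrightarrow X_a$ and using uniseriality of tube indecomposables to conclude $X_a$ is the top---is exactly what the paper's terse sentence is asking the reader to supply.
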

\begin{proof}
This result is a consequence of the full faithfulness of $F$ and the properties of the representations of a tube given by the identification of a tube with the category of nilpotent representations of the Jordan or cyclic quiver.
\end{proof}
Let $a\in \lvert\PP^1_{\F_q}\rvert$ corresponding to a non-homogeneous tube $\OO_a$ and $S_a=F(I_{a,1})$ as in the Proposition \ref{indtubes}. Then the subcategory of $\OO_a$ generated by $S_a$ is isomorphic to the category of nilpotent representations of the Jordan quiver. When $a$ is associated to a homogeneous tube, the restriction of $F$ induces an equivalence of categories between the corresponding tubes of $K_2$ and $Q$.

\section{The Hall algebra of a quiver}\label{3}
The Hall algebra of a quiver now has a long history beginning with the paper of Ringel, \cite{MR1062796}.
\subsection{Definition of the Hall algebra of a quiver}
We refer the reader to \cite{SchiffmannHall} for the proofs and the general theory of constructible Hall algebras. These can be defined for any abelian finitary category. We will consider it for the category $\Rep_Q(k)$ with $k=\F_q$. Let $Q$ be a finite quiver and $k=\F_q$ a finite field. As a vector space, the Hall algebra of $Q$ over $k$ is defined as
\[
\HH_{Q,k}=\bigoplus_{[M]\in \Ob(\Rep_Q(k))/\sim}\C[M]
\]
where $\Ob(\Rep_Q(k))/\sim$ is the set of representations of $Q$ over $k$ up to isomorphism.

It is convenient to see the Hall algebra as an algebra of functions endowed with some sort of convolution product. For this, let $\M_k=\Rep_Q(k)/\sim$ be the set of isomorphism classes of representations of $Q$ over $k$ and
\[
\M_k=\bigsqcup_{\dd\in\N^I}\M_k[\dd]
\]
its decomposition with respect to the dimension. Thus, 
\[
\HH_{Q,k}=\bigoplus_{\dd\in\N^I}\HH_{Q,k}[\dd],\quad\quad \HH_{Q,k}[\dd]=\Fun(\M_k[\dd],\C)=\text{functions $\M_k[\dd]\rightarrow\C$}.
\]
We now define the operations. Let $\nu=|k|^{1/2}$ and for $M$ a representation of $Q$, $a_M=|\Aut(M)|$.
\begin{enumerate}
\item(Multiplication) For $f, g\in \HH_{Q,k}$,
\[
f\star g=m(f,g) : [M]\mapsto \sum_{N\subseteq M}\nu^{\langle M/N,N\rangle}f([M/N])g([N])
\]
where the sum is over subrepresentations $N$ of $M$.
\item(Comultiplication) For $f\in \HH_{Q,k}$ and $[M], [N]\in\M_k$,
\[
\Delta(f)([M],[N])=\frac{\nu^{-\langle \dim(M),\dim(N)\rangle}}{|\Ext^1(M,N)|}\sum_{\xi\in\Ext^1(M,N)}f([X_{\xi}]),
\]
where $X_{\xi}$ is the middle term of an exact sequence representing the extension of $M$ by $N$ given by $\xi$.
\item (Green's scalar product) For $[M], [N]\in\HH_{Q,k}$, define
\[
([M],[N])=\frac{\delta_{[M],[N]}}{a_M}.
\]
\end{enumerate}
These operations endow $\HH_{Q,\F_q}$ with a twisted bialgebra structure (see the Introduction). The multiplication on $\HH_{Q,\F_q}\otimes \HH_{Q,\F_q}$ is defined for homogeneous $x,y,z,w$ by $(x\otimes y)(z\otimes w)=\nu^{(y,z)}(xz\otimes yw)$. 
The comultiplication lies at the center of this paper and it is in duality with the comultilication by \eqref{hopfpairing}: it is thus necessary to give other formulas for both the multiplication and the comultiplication, more adapted for explicit computations. For this purpose, we introduce the following notations.
\medskip

For $M,N,R\in\Rep_Q\F_q$ three quiver representations, let us define
\[
F_{M,N}^{'R}=|\{(\alpha,\beta)\in\Hom(N,R)\times\Hom(R,M)\mid 0\rightarrow N\xrightarrow{\alpha}R\xrightarrow{\beta}M\rightarrow 0 \text{ is exact}\}|,
\]
\[
F_{M,N}^R=|\{X\subset R\mid X\simeq N\text{ and } R/X\simeq M\}|,
\]
\[
F^{M,N}_R=F_{M,N}^R\frac{a_Ma_N}{a_R}.
\]
The free action of $\Aut(N)\times \Aut(M)$ on $F^{'R}_{M,N}$ given by $(a,b)\cdot (\alpha,\beta)=(\alpha a^{-1},b\beta)$ gives the equality
\[
F^{'R}_{M,N}=a_Ma_NF^R_{M,N}.
\]
We have also Riedtmann's formula :
\[
F^{M,N}_R=\frac{|\Ext^1(M,N)_R|}{|\Hom(M,N)|}
\]
where $\Ext^1(M,N)_R$ is the subset of $\Ext^1(M,N)$ of extensions represented by an exact sequence with middle term isomorphic to $R$. We now have the following formulas for the multiplication and comultiplication: for $M, N, R\in\Rep_{Q}(k)$ three representations of $Q$ over $k$,
\[
[M]\star[N]=\nu^{\langle M,N\rangle}\sum_{[S]\in\M_k}F_{M,N}^S[S],
\]
\[
\Delta([R])=\sum_{[U], [V]\in\M_k}\nu^{\langle U,V\rangle} F_R^{U,V}[U]\otimes [V].
\]
The Green scalar product is a Hopf pairing, meaning that for any $f,g,h\in \HH_{Q,k}$, 
\begin{equation}\label{hopfpairing}
(fg,h)=(f\otimes g,\Delta(h)).
\end{equation}
In this formula, we have implicitly naturally defined the scalar product on $\HH_{Q,k}\otimes\HH_{Q,k}$ by the formula: 
\[
([M]\otimes[N], [R]\otimes [S])=([M],[R])([N], [S])
\]
for any $[M], [N], [R], [S]\in \HH_{Q,k}$.

\subsection{The dualization process and the Hall algebra}
We saw in the subsection \ref{dualisation} how to to dualize representations of quivers to obtain from a representation $M$ of $Q$ a representation $M^*$ of the dual quiver $Q^*$. This process induces a linear map between the Hall algebras
\[
\begin{matrix}
D : &\HH_{Q,k}&\rightarrow&\HH_{Q^*,k}\\
    & [M]&\mapsto &[M^*]
\end{matrix}
\]
which is a Hopf algebra graded anti-isomorphism. In particular, $D$ induces a linear isomorphism between the spaces of primitive elements of a quiver and its dual.

\subsection{A PBW basis for the Hall algebra}
The Hall algebra construction can be extended to any finitary exact category, see \cite{SchiffmannHall} and references therein. Let $\HH_{\mathcal{A}}$\footnote{It is an algebra and a coalgebra. This is a bialgebra if $\mathcal{A}$ is hereditary.} be the Hall algebra of the finitary exact category $\mathcal{A}$.
\begin{theorem}[Guo-Peng, Berenstein-Greenstein]\label{Guo}
Let $\mathcal{A}$ be a finitary exact category. Then for any order on the set $\ind \mathcal{A}$ of isomorphism classes of indecomposable objects in $\mathcal{A}$, $\HH_{\mathcal{\mathcal{A}}}$ is spanned, as a $\C$-vector space, by ordered monomials on $\ind \mathcal{A}$. Moreover, if $\mathcal{A}$ is Krull-Schmidt, then such monomials form a basis of $\HH_{\mathcal{A}}$.
\end{theorem}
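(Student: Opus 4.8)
The plan is to compare the family of ordered monomials with the canonical basis $\{[M]\}$ of $\HH_{\mathcal{A}}$ and to show that the transition between the two is ``triangular'' for a suitable degeneration order. Fix the total order on $\ind\mathcal{A}$. Since $\mathcal{A}$ is finitary, every object $M$ has finite endomorphism ring and hence decomposes into finitely many indecomposables; I choose such a decomposition $M\simeq I_1\oplus\cdots\oplus I_k$ with $[I_1]\geq\cdots\geq[I_k]$ and set $m_M=[I_1]\star\cdots\star[I_k]$. Iterating the product formula $[A]\star[B]=\nu^{\langle A,B\rangle}\sum_{[S]}F^S_{A,B}[S]$, one expands $m_M$ as a finite nonnegative combination $m_M=\sum_{[S]}c_S[S]$, where a class $[S]$ occurs with $c_S\neq 0$ only if $S$ admits an admissible filtration with successive quotients $I_1,\dots,I_k$; in particular the associated graded of such an $S$ is isomorphic to $M=\bigoplus_j I_j$.

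The key step is the triangularity. On objects of a fixed class in $K_0(\mathcal{A})$ I would use the degeneration order defined by $M\leq_{\deg}S$ whenever $\dim_k\Hom(X,M)\geq\dim_k\Hom(X,S)$ for every $X$ (and dually for $\Hom(-,X)$). Applying $\Hom(X,-)$ to an admissible conflation $0\to A\to S\to B\to 0$ and using its left exactness gives $\dim_k\Hom(X,S)\leq\dim_k\Hom(X,A\oplus B)$, so the split object lies below any extension; iterating along a filtration shows that the associated graded is the minimum, that is $M\leq_{\deg}S$ for every $S$ occurring in $m_M$. Moreover the coefficient $c_M$ of $[M]$ itself is strictly positive: it collects the contribution of the obvious split filtration together with a positive power of $\nu$, and no cancellation occurs because all the structure constants $F^S_{A,B}$ are nonnegative. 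Hence $m_M=c_M[M]+\sum_{S>_{\deg}M}c_S[S]$ with $c_M>0$, so $[M]$ is the distinguished leading term of $m_M$.

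Spanning then follows by descending induction along $\leq_{\deg}$ (well-founded on each class). If $M$ is maximal for $\leq_{\deg}$ in its class, the correction sum is empty and $[M]=c_M^{-1}m_M$ is a scalar multiple of an ordered monomial; in general the relation $[M]=c_M^{-1}\bigl(m_M-\sum_{S>_{\deg}M}c_S[S]\bigr)$ expresses $[M]$ through $m_M$ and classes $[S]$ that, being strictly higher, are already known by induction to be combinations of ordered monomials. This proves that the ordered monomials span $\HH_{\mathcal{A}}$ for an arbitrary finitary exact category. Finally, when $\mathcal{A}$ is Krull--Schmidt the decomposition into indecomposables is unique, so ordered monomials correspond bijectively to isomorphism classes of objects via $M\mapsto m_M$; within each graded piece the transition matrix $(c_S)$ is then square, upper triangular for $\leq_{\deg}$ with nonzero diagonal $c_M$, hence invertible, and therefore $\{m_M\}$ is a basis.

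I expect the main obstacle to be the triangularity statement itself — precisely, proving that every extension term $S\not\simeq M$ is \emph{strictly} above its associated graded in a \emph{well-founded} order, so that the induction terminates and the diagonal is genuinely isolated. The Hom-dimension degeneration order handles this transparently in the quiver setting, whereas in full generality one must verify that $\leq_{\deg}$ (or a numerical refinement such as $\dim_k\End$) is antisymmetric and well-founded on each class. The non-uniqueness of indecomposable decompositions in the absence of Krull--Schmidt is exactly what degrades the conclusion from a basis to a mere spanning family, since then several ordered monomials may share the same leading object $M$.
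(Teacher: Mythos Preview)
The paper gives no proof of this theorem; it simply refers to \cite[Theorem 2.4]{MR3463039} and \cite[Theorem 3.1]{GuoPeng}. Your approach---expressing each ordered monomial $m_M$ as $c_M[M]$ plus a combination of classes that are proper degenerations of $M$, then inverting---is precisely the strategy of those references, so you are reconstructing the cited argument rather than proposing an alternative.

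Your sketch is correct in outline, and you correctly locate the one genuine gap: the Hom-degeneration relation $\leq_{\deg}$ is only a preorder in general, and neither its antisymmetry nor its well-foundedness on an arbitrary $K_0$-class is automatic in a finitary exact category. The standard remedy, which you allude to as a ``numerical refinement such as $\dim_k\End$'' but do not carry out, is to induct instead on the natural number $|\End(M)|$. The short-exact-sequence inequalities you wrote down, combined with their contravariant analogues, yield $|\End(S)|<|\End(A\oplus B)|$ for every non-split conflation $B\hookrightarrow S\twoheadrightarrow A$; iterating along a filtration then gives $|\End(S)|<|\End(M)|$ for every $S\not\simeq M$ occurring in $m_M$. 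This converts your descending induction on $\leq_{\deg}$ into an ordinary ascending induction on a natural number and closes the argument cleanly. A minor side remark: deducing an indecomposable decomposition of $M$ from the finiteness of $\End(M)$ requires that idempotents split in $\mathcal{A}$, which is a standing hypothesis in the cited references but worth making explicit.
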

\begin{proof}
See \cite[Theorem 2.4]{MR3463039}. See \cite[Theorem 3.1]{GuoPeng} for a quiver version.
\end{proof}

\subsection{Cuspidal functions and the theorem of Sevenhant and Van den Bergh}
In this section, we let  $Q$ be an arbitrary quiver (we allow multiple arrows, oriented cycles and edge loops) and $\F_q$ a finite field. Let $\HH_{Q,\F_q}$ be the Hall algebra of $Q$ over $\F_q$. 
\subsubsection{Cuspidal functions} We define here the objects of main interest in this paper.
\begin{definition}
An element $f\in\HH_{Q,\F_q}$ is called a \emph{cuspidal function} if it is primitive \emph{i.e.} if 
\[
\Delta{f}=f\otimes 1+1\otimes f.
\]
\end{definition}
We let $\HH_{Q,\F_q}^{\cusp}$ be the space of cuspidal functions. It decomposes as a direct sum
\[
\HH_{Q,\F_q}^{\cusp}=\bigoplus_{\dd\in\N^I}\HH_{Q,\F_q}^{\cusp}[\dd].
\]
A key fact in the proof of Theorem \ref{SVthm} below is the following.
\begin{lemma}\label{orthog}
Let $\dd\in\N^I$. Then $\HH_{Q,\F_q}^{\cusp}[\dd]$ is the $\dd$-graded component of the orthogonal with respect to Green's scalar product of the subspace
\[
\sum_{\dd',\dd''>0}\HH_{Q,\F_q}[\dd']\HH_{Q,\F_q}[\dd''].
\]
\end{lemma}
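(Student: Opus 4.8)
The plan is to characterize cuspidal functions via the Hopf pairing \eqref{hopfpairing}, which states $(fg,h)=(f\otimes g,\Delta(h))$. The key observation is that primitivity of $h$ in degree $\dd$ is a condition on the nontrivial part of $\Delta(h)$, and this part is exactly what pairs against products of lower-degree elements. First I would unravel the grading. Since $\HH_{Q,\F_q}$ is $\N^I$-graded and the comultiplication respects this grading, for $h\in\HH_{Q,\F_q}[\dd]$ we may write
\[
\Delta(h)=h\otimes 1+1\otimes h+\sum_{\substack{\dd',\dd''>0\\ \dd'+\dd''=\dd}}\Delta_{\dd',\dd''}(h),
\]
where $\Delta_{\dd',\dd''}(h)$ denotes the component landing in $\HH_{Q,\F_q}[\dd']\otimes\HH_{Q,\F_q}[\dd'']$. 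Thus $h$ is cuspidal precisely when every mixed component $\Delta_{\dd',\dd''}(h)$ vanishes for $\dd',\dd''>0$.

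Next I would translate this vanishing into an orthogonality statement. Fix $\dd',\dd''>0$ with $\dd'+\dd''=\dd$, and take arbitrary homogeneous $f\in\HH_{Q,\F_q}[\dd']$ and $g\in\HH_{Q,\F_q}[\dd'']$. By \eqref{hopfpairing}, $(fg,h)=(f\otimes g,\Delta(h))$. Because the scalar product on the tensor square is graded (the pairing of $[M]\otimes[N]$ with $[R]\otimes[S]$ is supported on $\dim M=\dim R$, $\dim N=\dim S$), only the component of $\Delta(h)$ in bidegree $(\dd',\dd'')$ contributes; the terms $h\otimes 1$ and $1\otimes h$ pair to zero against $f\otimes g$ since $\dd',\dd''>0$ forces a degree mismatch. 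Hence $(fg,h)=(f\otimes g,\Delta_{\dd',\dd''}(h))$. Since Green's scalar product is nondegenerate in each graded component, $\Delta_{\dd',\dd''}(h)=0$ for all such $(\dd',\dd'')$ if and only if $(fg,h)=0$ for all homogeneous $f,g$ of positive degrees summing to $\dd$, which is exactly the condition that $h$ be orthogonal to the $\dd$-graded component of $\sum_{\dd',\dd''>0}\HH_{Q,\F_q}[\dd']\HH_{Q,\F_q}[\dd'']$.

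The main technical point to verify carefully is the nondegeneracy of the pairing on each homogeneous tensor component, together with the claim that $\Delta_{\dd',\dd''}(h)$ is determined by its pairings against decomposables $f\otimes g$. The pairing on $\HH_{Q,\F_q}[\dd']\otimes\HH_{Q,\F_q}[\dd'']$ is the tensor product of the nondegenerate forms on each factor, hence itself nondegenerate; and elements of the form $f\otimes g$ with $f,g$ ranging over bases span the whole tensor component, so testing against them detects any nonzero element. I expect the only genuine obstacle to be bookkeeping: ensuring the degree mismatch really kills the primitive part $h\otimes 1+1\otimes h$ in the relevant bidegree (this uses $\dd',\dd''>0$ so that neither factor has degree $0$), and confirming that summing over all pairs $(\dd',\dd'')$ with $\dd'+\dd''=\dd$ recovers precisely the graded component of the product subspace. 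Both are routine once the grading conventions are fixed, so the lemma follows.
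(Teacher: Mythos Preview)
Your argument is correct and is essentially the standard proof: use connectedness to isolate the primitive part of $\Delta(h)$, then apply the Hopf pairing \eqref{hopfpairing} together with nondegeneracy of Green's form on each graded piece to identify vanishing of the mixed components $\Delta_{\dd',\dd''}(h)$ with orthogonality to products. The paper does not actually give a proof of this lemma but simply refers the reader to \cite[3.1]{SevenhantVdB}; what you have written is precisely the argument one finds there, so you have filled in what the paper leaves as a citation.
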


\begin{proof}
See \cite[3.1]{SevenhantVdB}.
\end{proof}

\subsubsection{The theorem of Sevenhant and Van den Bergh}
This theorem motivates the study of cuspidal functions, as they were used by Sevenhant and Van den Bergh in their article \cite{SevenhantVdB} to identify the whole Hall algebra of a quiver with the specialization at $\nu=\sqrt{q}$ of the positive part of the quantized enveloping algebra of a generalized Kac-Moody algebra associated to that quiver.
\medskip

Let $(f_j)_{j\in J}$ be a graded orthonormal basis of $\HH_{Q,\F_q}^{\cusp}$ with respect to Green's scalar product, so that in particular, for $j\in J$, $\dim f_j\in\N^I$ is well-defined. We define
\[
a_{i,j}=(\dim f_i,\dim f_j).
\]
for $i,j\in J$. By \cite[Proposition 3.2]{SevenhantVdB}, $a_{i,j}\leq 0$ for $i\neq j$ and if $a_{i,i}>0$, then $a_{i,i}=2$. The infinite matrix $(a_{i,j})$ is a generalized Cartan matrix in the sense of \cite{Borcherds}.

\begin{theorem}[Sevenhant-Van den Bergh, \cite{SevenhantVdB}]\label{SVthm} The Hall algebra $\HH_{Q,\F_q}$ is the bialgebra generated by the primitive elements $(f_j)_{j\in J}$ subject to the following relations:
\begin{enumerate}
\item For all $i,j\in J$, if $a_{i,j}=0$, then $f_if_j=f_jf_i$,
\item For all $i,j\in J$, if $a_{i,i}=2$, then
\[
\sum_{l=0}^{1-a_{i,j}}(-1)^l\left\{\begin{matrix}
									1-a_{i,j}\\
									l
                                                     \end{matrix}\right\}f_i^lf_jf_i^{1-a_{i,j}+l}=0
\]
where, for any integers $r$ and $s$, $\left\{ \begin{matrix}
										s\\
										r
										\end{matrix} \right\}$ is the $\nu$-binomial coefficient defined by :
\[
\left\{ \begin{matrix}
										s\\
										r
										\end{matrix} \right\}=\prod_{u=1}^{r}\frac{\nu^{u+s-r}-\nu^{-(u+s-r)}}{\nu^u-\nu^{-u}}
\]
and $\nu=\sqrt{q}$.
\end{enumerate}
\end{theorem}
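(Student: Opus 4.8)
The plan is to prove three statements in turn: that the primitive elements $(f_j)_{j\in J}$ generate $\HH_{Q,\F_q}$ as an algebra; that relations (1) and (2) hold among them; and that these relations present $\HH_{Q,\F_q}$, i.e.\ that the resulting surjection from the abstractly presented bialgebra is an isomorphism. Generation I would obtain by downward induction on the grading: by Lemma~\ref{orthog} together with the positive-definiteness of Green's scalar product, each finite-dimensional graded piece splits orthogonally as $\HH_{Q,\F_q}[\dd]=\HH_{Q,\F_q}^{\cusp}[\dd]\oplus\big(\sum_{\dd',\dd''>0}\HH_{Q,\F_q}[\dd']\HH_{Q,\F_q}[\dd'']\big)[\dd]$. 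The first summand lies in the subalgebra generated by the $f_j$ by definition, and the second lies in it by the induction hypothesis, so the $f_j$ generate.

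For the relations I would use a single mechanism: if a relation element $r$ is simultaneously (i) a sum of products of positive-degree elements, hence an element of $\sum_{\dd',\dd''>0}\HH_{Q,\F_q}[\dd']\HH_{Q,\F_q}[\dd'']$, and (ii) primitive, then Lemma~\ref{orthog} gives $(r,r)=0$ and positive-definiteness forces $r=0$. Both defining families of elements are visibly of the first kind, so everything reduces to checking primitivity. Using primitivity of the $f_k$ and the twisted product on $\HH_{Q,\F_q}\otimes\HH_{Q,\F_q}$, one computes $\Delta(f_if_j)=f_if_j\otimes 1+f_i\otimes f_j+\nu^{a_{i,j}}f_j\otimes f_i+1\otimes f_if_j$, so that $\Delta(f_if_j-f_jf_i)=(f_if_j-f_jf_i)\otimes 1+1\otimes(f_if_j-f_jf_i)+(1-\nu^{a_{i,j}})(f_i\otimes f_j-f_j\otimes f_i)$; when $a_{i,j}=0$ the last term drops out, $f_if_j-f_jf_i$ is primitive, and the mechanism yields relation (1). (The same computation shows the Gram matrix of $\{f_if_j,f_jf_i\}$ to be $\left(\begin{smallmatrix}1&\nu^{a_{i,j}}\\ \nu^{a_{i,j}}&1\end{smallmatrix}\right)$, singular precisely when $a_{i,j}=0$.) For relation (2) only $f_i$ and $f_j$ occur, so I would work inside the sub-bialgebra they generate and verify by the standard $\nu$-binomial identities that, when $a_{i,i}=2$, the quantum Serre element is primitive; the mechanism then makes it vanish.

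For completeness, let $U$ be the bialgebra presented by generators $(F_j)$ and relations (1) and (2). Since $(a_{i,j})$ is a generalized Cartan matrix, $U$ is the positive part of the quantized enveloping algebra of the associated generalized Kac-Moody algebra and carries a nondegenerate Hopf pairing for which the $F_j$ are orthonormal primitives. Because the relations hold in $\HH_{Q,\F_q}$ and the $f_j$ generate, there is a graded surjective bialgebra map $\phi\colon U\to\HH_{Q,\F_q}$ with $\phi(F_j)=f_j$; as the pairing of a bialgebra generated by primitives is determined by its values on generators together with the coproduct, $\phi$ matches the two pairings and is an isometry, whence $\ker\phi$ lies in the radical of the pairing on $U$. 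The crux---and the step I expect to be the main obstacle---is that this radical is trivial, equivalently that relations (1) and (2) already generate the radical of the pairing on the free algebra. This nondegeneracy statement is the structural heart of the theorem and is exactly where the theory of quantum generalized Kac-Moody algebras is genuinely needed; proving primitivity of the Serre element is a delicate but classical $\nu$-binomial computation, whereas this final nondegeneracy (or, as an alternative route, a matching PBW dimension count built on Theorem~\ref{Guo}, of comparable depth) is what makes the listed relations exhaustive rather than merely valid.
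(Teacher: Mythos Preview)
The paper does not prove this theorem: it is stated with attribution to Sevenhant--Van den Bergh and used as a black box. So there is no ``paper's own proof'' to compare against, and your proposal is in fact a faithful sketch of the original argument in \cite{SevenhantVdB}.

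Your three-step outline is correct. A couple of minor remarks. First, the induction in Step~1 is ordinary (upward) induction on the height $\sum_i d_i$ of $\dd$, not ``downward induction''; the point is that the product summand involves only strictly smaller degrees. Second, your ``mechanism'' in Step~2 is exactly right: the element is primitive and lies in the span of nontrivial products, hence is self-orthogonal for Green's positive-definite form and vanishes. The primitivity of the commutator when $a_{i,j}=0$ is the two-line computation you give; the primitivity of the quantum Serre element when $a_{i,i}=2$ is the classical Lusztig computation you allude to, and it does require $(f_i,f_i)$ to be normalized (which is built into the hypothesis that the $f_j$ are orthonormal). Third, you have correctly located the genuine content in Step~3: the nondegeneracy of the canonical form on the positive part of $\U_\nu$ of a Borcherds--Kac--Moody algebra (equivalently, that relations (1)--(2) generate the radical on the free algebra). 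This is exactly the input taken from \cite{SevenhantVdB} (building on Kang's and Lusztig's work), and there is no shortcut around it; your alternative suggestion of a PBW dimension match via Theorem~\ref{Guo} is in the same spirit but ultimately of the same depth.
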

\begin{remark}
From what we know about the Hall algebra, there is no natural choice for the basis $(f_j)_{j\in J}$. We hope to tackle this question in the future.
\end{remark}

\section{Kac and cuspidal polynomials of an affine quiver over a finite field}\label{4}
\subsection{Indecomposable and absolutely indecomposable representations count}
For a fundamental contribution on the count of representations of quivers over finite fields, see \cite{HuaCounting}. Let $Q$ be an arbitrary quiver. We denote by $A_{Q,\dd}(q)$, $\dd\in\N^I$ the Kac polynomial of $Q$ counting absolutely indecomposables representations of dimension $\dd$ over $\F_q$ and $I_{Q,\dd}(q)$ the polynomial counting indecomposable representations of $Q$ of dimension $\dd$ over $\F_q$. The following formula is well-known and is a consequence of Galois descent for quiver representations: for $\dd\in\N^I$ indivisible and $r\geq 1$,
\[
I_{Q,r\dd}(q)=\sum_{l\mid r}\frac{1}{l}\sum_{m\mid l}\mu(m)A_{Q,\frac{r}{l}\dd}(q^{\frac{l}{m}}),
\]
where $\mu$ is the M\"obius function. See also \cite[Theorem 4.1]{HuaCounting}. Sometimes, this formula is presented using plethystic operations (see \cite{BozecCounting} for basics on plethystic notation):
\[
\Exp_{z}\left(\sum_{\dd>0}I_{Q,\dd}(q)z^{\dd}\right)=\Exp_{t,z}\left(\sum_{\dd>0}A_{Q,\dd}(t)z^{\dd}\right).
\]
These polynomials do not depend on the orientation of the graph $Q$.

\subsection{Dimension count of cuspidal functions}
Let $Q$ be an arbitrary quiver. For complements on this section, see \cite{BozecCounting}. For $\dd\in\N^I$, we let
\[
C_{Q,\dd}(q)=\dim_{\C}\HH_{Q,\F_q}^{\cusp}[\dd].
\]
We will not use the following in the sequel.
\begin{theorem}[Bozec-Schiffmann, \cite{BozecCounting}]
The function $C_{Q,\dd}(q)$ is a polynomial in $\Q[q]$. 
\end{theorem}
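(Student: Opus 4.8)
The plan is to reduce the polynomiality of $C_{Q,\dd}(q)$ to the known polynomiality of the Kac polynomials $A_{Q,\dd}(q)$ by expressing the dimensions of the cuspidal spaces through the same plethystic combinatorics that governs the count of indecomposables. The key input is Lemma \ref{orthog}, which identifies $\HH_{Q,\F_q}^{\cusp}[\dd]$ as the orthogonal complement, with respect to Green's scalar product, of the span of products $\HH_{Q,\F_q}[\dd']\HH_{Q,\F_q}[\dd'']$ with $\dd',\dd''>0$. Since Green's form is nondegenerate on each graded piece, this means $C_{Q,\dd}(q)$ equals $\dim\HH_{Q,\F_q}[\dd]$ minus the dimension of the subspace of decomposable-type elements in degree $\dd$. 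Equivalently, the cuspidal dimensions are the ``primitive'' counterpart of the full graded dimensions under the bialgebra structure, and one expects a free-Lie-algebra / PBW-type identity relating $\sum_{\dd}C_{Q,\dd}(q)z^{\dd}$ to $\sum_{\dd}\dim\HH_{Q,\F_q}[\dd]\,z^{\dd}$ via the plethystic exponential.

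First I would record that $\dim\HH_{Q,\F_q}[\dd]$ is the number of isomorphism classes of representations of dimension $\dd$, which by the Krull--Schmidt theorem is a polynomial in $q$ obtained by applying $\Exp_z$ to the generating series of indecomposable counts; by the formula already displayed in the excerpt, $\Exp_z\!\left(\sum_{\dd>0}I_{Q,\dd}(q)z^{\dd}\right)=\Exp_{t,z}\!\left(\sum_{\dd>0}A_{Q,\dd}(t)z^{\dd}\right)$, so the total graded dimensions are polynomial in $q$. Next, using the PBW basis of Theorem \ref{Guo} together with the Milnor--Moore / Sevenhant--Van den Bergh picture (Theorem \ref{SVthm}), the Hall algebra is, as a graded vector space, the enveloping-algebra-type object built on its cuspidal (primitive) generators. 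This gives an identity of generating functions of the shape
\[
\sum_{\dd\in\N^I}(\dim\HH_{Q,\F_q}[\dd])\,z^{\dd}=\Exp_z\!\left(\sum_{\dd>0}C_{Q,\dd}(q)\,z^{\dd}\right),
\]
where the exponential on the right encodes the free-supercommutative structure of a PBW basis indexed by cuspidal generators (with the real-root/imaginary-root dichotomy dictated by whether $a_{\dd,\dd}=2$).

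The decisive step is then to invert this plethystic relation: applying the formal logarithm $\Log_z$ to the polynomial series $\sum_{\dd}(\dim\HH_{Q,\F_q}[\dd])z^{\dd}$ recovers $\sum_{\dd>0}C_{Q,\dd}(q)z^{\dd}$, and because $\Log_z$ (and its refinement $\Log_{t,z}$) preserves the property of having coefficients in $\Q[q]$—being built from Möbius inversion with rational coefficients exactly as in the $I$-versus-$A$ formula—each $C_{Q,\dd}(q)$ lands in $\Q[q]$. I would make the bookkeeping precise by working degree-by-degree: the coefficient of $z^{\dd}$ in $\Log_z$ of a power series is a universal $\Q$-polynomial in finitely many of the input coefficients, so polynomiality and rationality of the $\dim\HH_{Q,\F_q}[\dd']$ for $\dd'\leq\dd$ propagate to $C_{Q,\dd}(q)$.

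The main obstacle is justifying that the graded dimension of the Hall algebra is genuinely the plethystic exponential of the cuspidal dimensions with the correct $\nu$-dependence, i.e. that the PBW basis of Theorem \ref{Guo} (or the Sevenhant--Van den Bergh generators of Theorem \ref{SVthm}) organizes the graded pieces into a clean free-(super)commutative count that is \emph{independent} of the field size $q$ in its combinatorial shape. One must check that the multiplicities with which a cuspidal generator of dimension $\dd_0$ contributes to degree $n\dd_0$ follow the same symmetric-function pattern as in the $I$/$A$ dictionary, so that the inversion by $\Log$ is legitimate uniformly in $q$; any $q$-dependence hidden in the braiding or in the distinction between isotropic and anisotropic generators must be absorbed into the plethystic variables rather than spoiling polynomiality. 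Once this structural identity is in hand, the conclusion that $C_{Q,\dd}(q)\in\Q[q]$ is formal.
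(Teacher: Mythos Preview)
The paper does not give a proof of this theorem; it is simply cited from \cite{BozecCounting} and the author notes explicitly that it ``will not [be] use[d] in the sequel.'' So there is no proof in the paper to compare against.

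That said, your proposed argument has a genuine gap: the displayed identity
\[
\sum_{\dd\in\N^I}\dim\HH_{Q,\F_q}[\dd]\,z^{\dd}=\Exp_z\!\left(\sum_{\dd>0}C_{Q,\dd}(q)\,z^{\dd}\right)
\]
is \emph{false}. The correct PBW/Krull--Schmidt statement is the one with $I_{Q,\dd}(q)$ in place of $C_{Q,\dd}(q)$: the basis of Theorem \ref{Guo} is indexed by \emph{indecomposables}, not by cuspidals. The Hall algebra is not a free (super)commutative algebra on its primitive elements; Theorem \ref{SVthm} presents it as a quantum Borcherds--Kac--Moody algebra, which has nontrivial Serre relations among the generators. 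Already for the $A_2$ quiver (one arrow $1\to 2$) the identity fails: there are two isoclasses of dimension $(1,1)$, so $\dim\HH_{Q,\F_q}[(1,1)]=2$, while the only cuspidal dimensions are $e_1$ and $e_2$ with $C_{Q,e_i}=1$, and the coefficient of $z_1z_2$ in $\Exp_z(z_1+z_2)=\frac{1}{(1-z_1)(1-z_2)}$ is $1$.

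The actual link between the $C_{Q,\dd}$ and the graded dimensions of $\HH_{Q,\F_q}$ (equivalently, the $I_{Q,\dd}$) passes through the Weyl--Kac--Borcherds denominator formula: the $C_{Q,\dd}$ are the \emph{simple} root multiplicities of the Borcherds datum, and the full positive-root multiplicities (which govern the character via PBW) are determined from them by a recursion that also involves the Cartan matrix and the Weyl group. This is the route taken in \cite{BozecCounting}, and it is where the work lies. Your inversion by a plain $\Log_z$ recovers $I_{Q,\dd}$, not $C_{Q,\dd}$, so the argument as written does not reach the conclusion.
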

In \emph{loc. cit.}, Bozec and Schiffmann defined the absolutely cuspidal polynomials of a quiver $Q$. They are characterized as follows.

\begin{proposition}[Bozec-Schiffmann, \cite{BozecCounting}]\label{characterization}
 The absolutely cuspidal polynomials of $Q$ form the unique family of polynomials $(C_{Q,\dd}^{abs}(t))_{\dd\in\N^I}$ satisfying the following conditions.
 \begin{enumerate}
  \item If $\dd\in\N^I$ is hyperbolic, $C_{Q,\dd}^{abs}(t)=C_{Q,\dd}(t)$,
  \item If $\dd\in\N^I$ is isotropic and indivisible, then
  \[
   \Exp_{z}\left(\sum_{r>0}C_{Q,\dd}(t)z^{\dd}\right)=\Exp_{t,z}\left(\sum_{r>0}C_{Q,\dd}^{abs}(t)z^{\dd}\right).
  \]
 \end{enumerate}
\end{proposition}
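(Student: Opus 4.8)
The plan is to read conditions (1) and (2) as a \emph{definition} of the family $(C_{Q,\dd}^{abs})$ and to prove that this definition is legitimate: that it produces exactly one family, that its members lie in $\Q[t]$, and that the two clauses never conflict. Consistency is clear, since the hyperbolic and isotropic loci are disjoint and together exhaust the imaginary dimensions, which are the only ones where rational and absolute cuspidal counts can differ. Relation (2) is to be understood along a single isotropic ray: for $\dd$ isotropic and indivisible one compares the two plethystic exponentials of the generating series $\sum_{r>0} C_{Q,r\dd}(t)\,z^{r\dd}$ and $\sum_{r>0} C_{Q,r\dd}^{abs}(t)\,z^{r\dd}$. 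This is the exact analogue, for cuspidal functions, of the Galois-descent identity relating $I_{Q,\dd}$ to the Kac polynomial $A_{Q,\dd}$ recalled at the start of this section, and it encodes the passage from counts over $\F_q$ to counts over all finite extensions $\F_{q^n}$.

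First I would establish uniqueness. Fix an isotropic indivisible $\dd$ and grade relation (2) by the integer $r$, i.e. expand both sides as power series in the single monomial $z^{\dd}$. Each plethystic exponential has the shape $1+(\text{degree-one term})+\cdots$, so matching the coefficient of $z^{r\dd}$ at $r=1$ yields $C_{Q,\dd}^{abs}=C_{Q,\dd}$, while for general $r$ it expresses $C_{Q,r\dd}^{abs}$ as an explicit $\Q$-combination of the $C_{Q,s\dd}$ with $s\le r$ and of the $C_{Q,s\dd}^{abs}$ with $s<r$. An induction on $r$ therefore determines the whole ray from the cuspidal polynomials alone. On the hyperbolic locus condition (1) fixes the value outright, and in the remaining real-root and non-root dimensions there is no positive-dimensional family to correct for, so $C_{Q,\dd}^{abs}=C_{Q,\dd}$ holds trivially. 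Hence at most one family satisfies (1) and (2).

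Next I would prove existence together with polynomiality. Define $C_{Q,\dd}^{abs}$ by (1) on the hyperbolic locus and, on each isotropic ray, by inverting the plethystic exponential:
\[
\sum_{r>0} C_{Q,r\dd}^{abs}(t)\, z^{r\dd}=\Log_{t,z}\!\left(\Exp_{z}\!\left(\sum_{r>0} C_{Q,r\dd}(t)\, z^{r\dd}\right)\right).
\]
Extracting the coefficient of $z^{r\dd}$ produces a finite $\Q$-linear combination of products of Adams-twisted cuspidal polynomials $C_{Q,s\dd}(t^{m})$ with $sm\le r$. Since each $C_{Q,\dd}$ lies in $\Q[t]$ by the Bozec--Schiffmann polynomiality theorem quoted above, and the Adams operations $t\mapsto t^{m}$ preserve $\Q[t]$, every $C_{Q,r\dd}^{abs}$ is again a polynomial in $\Q[t]$, the degree staying finite because only finitely many terms contribute at fixed $r$. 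This family satisfies (1) and (2) by construction, and by the previous paragraph it is the only one.

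The formal manipulation of plethystic exponentials over the $\lambda$-ring $\Q[t]$ is routine, and polynomiality is then immediate once the polynomiality of the $C_{Q,\dd}$ is granted; so the elementary part of the argument is light. The genuine difficulty, which I would import from Bozec--Schiffmann rather than reprove, is twofold: the polynomiality of the cuspidal counts $C_{Q,\dd}(q)$ themselves, and the conceptual justification that relation (2), and not some other bookkeeping, correctly measures the discrepancy between rational and absolute cuspidal functions. The reason only isotropic dimensions require a plethystic correction is the presence of the positive-dimensional $\PP^1$-family of homogeneous tubes, whose rational points distribute among finite extensions exactly as in Galois descent; in anisotropic dimensions the cuspidal functions form a rigid configuration and the two counts already coincide. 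Tracing this tube-and-Galois combinatorics to see that the correction is \emph{precisely} a plethystic exponential is the delicate step.
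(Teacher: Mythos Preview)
The paper does not supply its own proof of this proposition: it is quoted from Bozec--Schiffmann as a characterization (effectively the definition) of the absolutely cuspidal polynomials and is used without further justification. Your argument---treating conditions (1) and (2) as a recursive definition, observing that along each isotropic ray the relation $\Exp_z(\cdots)=\Exp_{t,z}(\cdots)$ is triangular in $r$ and hence uniquely invertible, and importing the polynomiality of the $C_{Q,\dd}$ from the preceding Bozec--Schiffmann theorem---is the standard and correct way to unpack such a statement. You also correctly locate the nontrivial content (the polynomiality of $C_{Q,\dd}$ and the conceptual justification that only isotropic rays require a plethystic correction) in the cited reference rather than in the formal manipulation itself.
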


\begin{conj}[Bozec-Schiffmann, {\cite[Conjecture 1.3]{BozecCounting}}]\label{conjBS}For any $Q$ and $\dd\in\N^I$, $C_{Q,\dd}^{abs}(t)\in\N[t]$.
\end{conj}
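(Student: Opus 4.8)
The conjecture predicts positivity for all $\dd$, whereas our explicit description of cuspidal functions is limited to imaginary dimensions; the plan therefore proves Conjecture \ref{conjBS} only for \emph{isotropic} $\dd$, the case announced in the abstract, the hyperbolic dimensions remaining out of reach. The first step is to reduce to an affine quiver. If $\dd$ is isotropic with $\HH_{Q,\F_q}^{\cusp}[\dd]=0$, the plethystic relation of Proposition \ref{characterization} forces $C_{Q,\dd}^{abs}(t)=0\in\N[t]$ and there is nothing to prove. Otherwise $\HH_{Q,\F_q}^{\cusp}[\dd]\neq0$, and the main result of Section \ref{8} makes $Q'=\supp(\dd)$ an affine subquiver; since the Euler form of $Q$ restricts on the lattice of $Q'$ to that of $Q'$ and $\dd$ is isotropic, necessarily $\dd=r\delta$ for $\delta$ the imaginary root of $Q'$ and some $r\geq1$, and the cuspidal functions of $Q$ of dimension $\dd$ are exactly those of $Q'$. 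We may thus assume $Q$ affine with $\dd=r\delta$; the Jordan and cyclic cases are treated in Section \ref{5}, so we may further suppose $Q$ acyclic affine.

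The second step computes $C_{Q,r\delta}(t)$. By the main theorem of Section \ref{6}, $\HH_{Q,\F_q}^{\cusp}[r\delta]=\ker L$ is a codimension-one subspace of the regular cuspidal space, whence
\[
C_{Q,r\delta}(q)=\dim_{\C}\HH_{Q,\F_q,\RC}^{\cusp}[r\delta]-1.
\]
The tensor factorisation of $\HH_{Q,\F_q,\RC}$ over the closed points of $\lvert\PP^1_{\F_q}\rvert$, together with the fact that the primitives of a graded connected tensor product are the direct sum of the primitives of its factors, splits this dimension over tubes: a homogeneous tube of degree $e$ carries exactly one primitive in dimension $r\delta$ when $e\mid r$ (a power sum of Macdonald's ring) and none otherwise, while a non-homogeneous tube of period $p_l$ carries $c_{p_l}(r)$ of them, namely the nilpotent cuspidals of $C_{p_l}$ in dimension $r\delta$. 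Writing $N(e)$ for the number of homogeneous tubes of degree $e$, this yields
\[
C_{Q,r\delta}(t)=\sum_{e\mid r}N(e)+\sum_{l=1}^{d}c_{p_l}(r)-1,
\]
where $N(e)$ counts the closed points of $\lvert\PP^1_{\F_q}\rvert$ of degree $e$, reduced by $d$ in degree one.

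The third step inverts the relation of Proposition \ref{characterization}: applying $\Log_{t,z}$ and reading off the coefficient of $z^{n\delta}$ gives
\[
C_{Q,n\delta}^{abs}(t)=\frac{1}{n}\sum_{m\mid n}\mu(m)\sum_{r\mid(n/m)}r\,C_{Q,r\delta}(t^{m}).
\]
Two facts render the homogeneous contribution transparent: this operator fixes any input independent of $t$, in the sense that $\tfrac{1}{n}\sum_{m\mid n}\mu(m)\sum_{r\mid(n/m)}r\,c(r)=c(n)$ for such $c$; and on the Kronecker quiver, where every tube is homogeneous and $d=0$, so that $C_{K_2,r\delta}(t)=\sum_{e\mid r}N(e)-1$, the formula collapses to the constant polynomial $t$ in every imaginary degree. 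Subtracting this universal homogeneous part leaves $C_{Q,n\delta}^{abs}(t)-t$ as a residual term built entirely from the inverted cyclic-quiver dimensions $c_{p_l}(r)$, together with the constant correction $-d$.

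The main obstacle is precisely the positivity of this residual term. Since Section \ref{5} provides no closed formula for the cyclic-quiver cuspidal dimensions $c_{p_l}(r)$, I would have to feed its description of these functions on indecomposable representations into the inversion formula and verify that the alternating Möbius signs leave non-negative coefficients; in effect the positivity for acyclic affine quivers is reduced to the positivity of absolutely cuspidal polynomials for the cyclic quivers themselves, which is the genuinely hard, irreducible case. Once this is secured, $C_{Q,r\delta}^{abs}(t)\in\N[t]$ in every isotropic dimension and Conjecture \ref{conjBS} follows there; the hyperbolic dimensions, for which no substitute for the main theorem of Section \ref{6} is available, remain the genuinely open part.
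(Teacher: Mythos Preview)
Your reduction to an affine subquiver via Theorem \ref{isotropic} is exactly the paper's first and essentially only step. After that the paper takes a single stride where you embark on a long detour: the identity $C^{abs}_{Q,r\delta}(t)=t$ for \emph{every} affine quiver is already recorded as a known fact---it is the Hua--Xiao proposition in Section \ref{4} (with the remark immediately after extending it to the Jordan and cyclic quivers), and it is restated just before Corollary \ref{fin} as formula \eqref{teq}, quoted from Bozec--Schiffmann. Corollary \ref{fin} is then immediate: $t\in\N[t]$ (and $0\in\N[t]$ in the non-cuspidal case). Neither the linear form $L$ of Section \ref{6} nor any plethystic inversion enters the paper's argument for this corollary.

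The ``main obstacle'' you identify is a misreading. The numbers you call $c_{p_l}(r)$ are the dimensions $\dim_\C \HH^{\nil\cusp}_{C_{p_l},\F_q}[r\delta]$, and Theorem \ref{cuspCycl} states explicitly that these equal $1$ for every $r\geq 1$. The Remark after Lemma \ref{indcyclic} about there being ``no known closed formula'' concerns the nilpotent cuspidal \emph{functions} themselves---their values on isomorphism classes---not the dimension of the space they span. Plugging $c_{p_l}(r)=1$ into your own display makes $C_{Q,r\delta}$ independent of which affine quiver $Q$ is chosen, so your computation collapses to the Kronecker case, which you already accept yields $t$. Thus your route can in fact be completed, but it is a circuitous re-derivation of a formula the paper simply cites. (A smaller point: your claim that $\HH^{\cusp}_{Q,\F_q}[\dd]=0$ alone forces $C^{abs}_{Q,\dd}=0$ via Proposition \ref{characterization} is not literally correct, since the plethystic relation couples all multiples of the indivisible part of $\dd$; the conclusion is salvaged because Theorem \ref{isotropic} applied to any one of those multiples shows they all share the same non-affine support, so all the $C_{Q,r\dd_0}$ vanish together.)
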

We prove this conjecture for $\dd$ isotropic, see Corollary \ref{fin}.

\subsection{Kac and cuspidal polynomials of affine quivers}
For affine quivers, explicit expressions of the considered polynomials are known.
\subsubsection{Kac polynomials of affine quivers}
\begin{theorem}
Let $Q$ be an affine quiver, $\delta\in\N^I$ its indecomposable imaginary root, and $k=\F_q$ a finite field with $q$ elements.
\begin{enumerate}
\item If $\dd\in\N^I$ is a real root (\emph{i.e.} if $\langle \dd,\dd\rangle=1$), there is a unique indecomposable representation over $\F_q$, and it is absolutely indecomposable: $A_{Q,\dd}(q)=1=I_{Q,\dd}(q)$,
\item If $\dd\in\N^I$ is an imaginary root, then it is a positive multiple $r\delta$ of the indecomposable imaginary root and $A_{Q,\dd}(t)=t+n_0$, \emph{i.e.} there are $q+n_0$ absolutely indecomposable representations over $\F_q$ where $n_0$ is the number of vertices of $Q$ minus one (the number of vertices of the finite type quiver associated to $Q$). 
\end{enumerate}
\end{theorem}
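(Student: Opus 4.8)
The plan is to convert the count of (absolutely) indecomposable representations of dimension $\dd$ into the combinatorics of the tubes from Theorem~\ref{ringelth}, treating the two parts separately. For \emph{Part (1)} I would first note that a real root is indivisible: if $\dd=r\dd'$ then $\langle\dd,\dd\rangle=r^2\langle\dd',\dd'\rangle$, so $\langle\dd,\dd\rangle=1$ forces $r=1$. The counting formula then specializes at $r=1$ to $I_{Q,\dd}(q)=A_{Q,\dd}(q)$, so it suffices to produce a \emph{single} indecomposable $\F_q$-representation of dimension $\dd$: it is then automatically absolutely indecomposable and both polynomials equal $1$. To pin down this indecomposable I would use the sign of the defect $\partial\dd=\langle\delta,\dd\rangle$. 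Since the imaginary roots are the multiples of $\delta$, which have defect $0$, a real root of nonzero defect is realised only by a preprojective (if $\partial\dd<0$) or preinjective (if $\partial\dd>0$) indecomposable, and each such dimension vector is attained exactly once, by a rigid brick, hence by an absolutely indecomposable representation. A regular real root ($\partial\dd=0$) cannot occur in a homogeneous tube, whose indecomposables all have dimension a multiple of $\delta$; it is realised by the unique indecomposable of the appropriate regular length and socle inside a non-homogeneous tube, which is absolutely indecomposable because that tube is rational.

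For \emph{Part (2)}, an imaginary root is a multiple $r\delta$ and, having defect $0$, is realised only by regular indecomposables; by Theorem~\ref{ringelth} each of these lies in a single tube, so I would count tube by tube. In a homogeneous tube $\CC_Q^a$ of degree $\deg(a)$, Proposition~\ref{indtubes} gives exactly one indecomposable of each dimension $n\deg(a)\delta$, namely $F(I_{a,n})$, so dimension $r\delta$ is attained iff $\deg(a)\mid r$. Since $F$ is fully faithful and compatible with extension of scalars to $\xoverline{\F}_q$, $F(I_{a,n})$ is absolutely indecomposable iff $I_{a,n}$ is, that is, iff $\deg(a)=1$. Hence the homogeneous tubes contribute to $A_{Q,r\delta}(q)$ exactly the number of degree-one points of $\PP^1_{\F_q}$ outside $D$, which is $(q+1)-d$.

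For a non-homogeneous tube of period $p$, with regular simples $S_0,\dots,S_{p-1}$ satisfying $\sum_i\dim S_i=\delta$, the indecomposable of regular length $\ell$ and socle $S_i$ has dimension a sum of $\ell$ cyclically consecutive $\dim S_j$. I would show this equals $r\delta$ only for $\ell=rp$: writing $\ell=ap+b$ with $0\le b<p$, the dimension is $a\delta$ plus a sum of $b$ consecutive $\dim S_j$, and a nonempty such partial sum is coordinatewise $\le\delta$ but $\ne\delta$, hence cannot be a positive multiple of $\delta$; therefore $b=0$. This gives exactly $p$ indecomposables of dimension $r\delta$ (one per socle), all absolutely indecomposable since a non-homogeneous tube remains a single period-$p$ tube over $\xoverline{\F}_q$. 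Summing over the tubes yields $A_{Q,r\delta}(q)=(q+1-d)+\sum_{l=1}^d p_l$.

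The last step, which is the main piece of bookkeeping, is to reconcile this with $q+n_0$. Reading off the table in Theorem~\ref{ringelth} one verifies the uniform identity $\sum_{l=1}^d(p_l-1)=\abs{I}-2$ in every affine type, so $\sum_l p_l=\abs{I}-2+d$ and $A_{Q,r\delta}(q)=q+1-d+\abs{I}-2+d=q+(\abs{I}-1)=q+n_0$. As this holds for all prime powers $q$ and $A_{Q,r\delta}$ is a polynomial, we conclude $A_{Q,r\delta}(t)=t+n_0$. I expect the two delicate points to be the proof that only full cycles ($\ell=rp$) contribute inside a non-homogeneous tube and the careful transfer of absolute indecomposability through the functor $F$; the period identity is then a finite case check.
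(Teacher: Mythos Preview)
The paper supplies no argument of its own here; its proof is a bare citation to \cite[5.1]{HuaXiao}. Your tube-by-tube count is exactly the kind of direct argument that underlies that reference, so the overall strategy is sound. Two points need tightening.

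In Part~(1), the sentence ``it suffices to produce a \emph{single} indecomposable \ldots\ both polynomials equal $1$'' is a non sequitur: from $I_{Q,\dd}=A_{Q,\dd}$ and mere existence you only get $A_{Q,\dd}(q)\ge 1$. You do go on to sketch \emph{uniqueness} by defect, which is what is actually required, but the regular (defect-zero) case is more than an assertion: you need that a given regular real root is realised in exactly one non-homogeneous tube and with exactly one socle, i.e.\ that the dimension vectors of the tube indecomposables are pairwise distinct modulo~$\Z\delta$. This is true, but it is either a type-by-type check or an appeal to the fact that regular real-root indecomposables are exceptional (rigid bricks), hence determined by their dimension vector.

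In Part~(2), Theorem~\ref{ringelth} is stated only for \emph{acyclic} affine quivers, so your count as written omits the Jordan and cyclic quivers, which the statement covers. These are easily supplied from Section~\ref{blockCn}: for $J$ the absolutely indecomposables of dimension $r$ are the $\lambda I_r+J_r$ with $\lambda\in\F_q$, giving $q=q+n_0$ since $n_0=0$; for $C_n$ there are $n$ nilpotent absolutely indecomposables $V_{i,rn}$ of dimension $r\delta$ and, via the equivalence $G_n$ with $\Rep_J^{\inv}$, a further $q-1$ invertible ones, giving $q+n-1=q+n_0$. With these two amendments your argument is complete and, compared to the paper's one-line citation, has the virtue of being self-contained.
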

\begin{proof}
See \cite[5.1]{HuaXiao}.
\end{proof}

\subsubsection{Cuspidal polynomials of affine quivers}
For acyclic affine quivers, \cite{HuaXiao} countains all the formulas we need. Let $Q=(I,\Omega)$ be an affine quiver.

\begin{proposition}[Hua-Xiao, {\cite[5.2]{HuaXiao}}]
For $r\geq 1$, we have:
\begin{equation}\label{cuspindec}
C_{Q,r\delta}(t)=I_{Q,r\delta}(t)-n_0
\end{equation}
and
\[
C_{Q,r\delta}^{abs}(t)=t.
\]
For $e_i=(0,\hdots,1,\hdots,0)\in\N^I$, we have
\[
 C_{Q,e_i}=C_{Q,e_i}^{abs}=1.
\]
For $\dd\not\in\{e_i : i\in I\}\cup\{r\delta : r\geq 1\}$,
\[
 C_{Q,\dd}=0.
\]

\end{proposition}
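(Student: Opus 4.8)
The plan is to verify each stated formula by relating cuspidal functions to the indecomposable count, using the structure of the category of regular representations established in Theorem \ref{ringelth} and the orthogonality characterization of cuspidal functions from Lemma \ref{orthog}. First I would handle the easy cases. For $\dd=e_i$ a simple root, the only representation of dimension $e_i$ is the simple $S_i$, and $[S_i]$ is automatically primitive since there are no nontrivial subobjects or quotients, so $C_{Q,e_i}=C_{Q,e_i}^{abs}=1$. For $\dd$ neither a simple root nor a multiple of $\delta$: if $\dd$ is a real root distinct from the $e_i$, it is not cuspidal because the unique indecomposable of such dimension is a proper iterated extension/restriction and its class is not primitive; if $\dd$ is not a root at all, every representation of dimension $\dd$ decomposes and $\HH^{\cusp}_{Q,\dd}$ is forced to vanish by Lemma \ref{orthog} together with the fact that the whole graded piece lies in the span of products. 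This gives $C_{Q,\dd}=0$ for $\dd\notin\{e_i\}\cup\{r\delta\}$.

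The main content is the imaginary case $\dd=r\delta$. Here I would invoke Hua and Xiao's computation directly, citing \cite[5.2]{HuaXiao}, but the conceptual explanation of \eqref{cuspindec} is the following. The indecomposable regular representations of dimension $r\delta$ are distributed among the tubes as classified in Theorem \ref{ringelth} and Proposition \ref{indtubes}. The number $I_{Q,r\delta}(t)$ counts all indecomposables of that dimension, which is one more than the genuine cuspidal dimension by exactly $n_0$, the rank of the finite-type subquiver. The discrepancy $n_0$ arises because among the indecomposable functions, only a codimension-$n_0$ subspace consists of primitive elements: the comultiplication of an indecomposable class picks up contributions from extensions inside the non-homogeneous tubes, whose number of simple regular summands beyond the homogeneous part totals $n_0 = \sum_{l=1}^d (p_l-1)$. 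This combinatorial identity, which matches $n_0$ = (number of vertices of $Q$) $-1$ across the affine types in Figure \ref{tubes}, is what produces the correction term.

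For the absolutely cuspidal polynomial, the relation in Proposition \ref{characterization}(2) must be inverted. Since $\delta$ is indivisible and isotropic, the plethystic equation relating $\sum_r C_{Q,r\delta}(t)z^{r\delta}$ to $\sum_r C^{abs}_{Q,r\delta}(t)z^{r\delta}$ reduces, after applying $\Log$, to an Euler-product manipulation in the single variable tracking multiples of $\delta$. One then checks that the unique family solving this with $C_{Q,r\delta}(t)=I_{Q,r\delta}(t)-n_0$ on the left yields $C^{abs}_{Q,r\delta}(t)=t$, consistent with $A_{Q,r\delta}(t)=t+n_0$ from the Kac polynomial theorem: the absolutely indecomposable and absolutely cuspidal counts differ by the same constant $n_0$, and the plethystic exponential is compatible with subtracting a constant in each isotropic degree.

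The hard part will be making the identity $n_0=\sum_{l}(p_l-1)$ and the passage from the indecomposable count to the cuspidal count fully rigorous rather than merely plausible; this is precisely where one needs the detailed block decomposition of $\Rep_Q^{\RC}(\F_q)$ and the explicit comultiplication formulas from Section \ref{3}. Since the statement is attributed to Hua and Xiao, I would ultimately defer the imaginary-dimension computation to \cite[5.2]{HuaXiao} and only reprove the elementary boundary cases $\dd=e_i$ and $\dd\notin\{e_i\}\cup\{r\delta\}$ directly, noting that these follow from the primitivity of simple classes and from Lemma \ref{orthog}.
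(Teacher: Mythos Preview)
The paper gives no proof of this proposition at all: it is stated with attribution to Hua--Xiao \cite[5.2]{HuaXiao}, and the only further remark is that the formulas extend to the Jordan and cyclic quivers by \cite[1.2]{BozecCounting}. Your proposal therefore already does more than the paper, and your final move---deferring the imaginary case to the cited source---matches the paper exactly.

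That said, your sketch of the boundary cases has a genuine gap in the real-root case $\dd\neq e_i$. You argue that $\dd$ is not cuspidal because ``the unique indecomposable of such dimension \ldots\ is not primitive.'' But the non-primitivity of the single class $[M]$ does not force $\HH_{Q,\F_q}^{\cusp}[\dd]=0$: the cuspidal space is the orthogonal complement of nontrivial products (Lemma~\ref{orthog}), and a priori some linear combination of $[M]$ with decomposable classes could still be primitive. The clean argument, which the paper itself uses later in the proof of Theorem~\ref{isotropic}, is the Sevenhant--Van den Bergh inequality: any cuspidal dimension $\dd\neq e_i$ satisfies $(\dd,e_i)\leq 0$ for all $i\in I$, hence $(\dd,\dd)=\sum_i d_i(\dd,e_i)\leq 0$; since the symmetrized Euler form of an affine quiver is positive semidefinite with radical $\Z\delta$, this forces $\dd\in\Z_{>0}\delta$. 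Your treatment of the non-root case via Theorem~\ref{Guo} and Lemma~\ref{orthog} is fine, and your plethystic justification of $C^{abs}_{Q,r\delta}(t)=t$ is correct, since subtracting the constant $n_0$ in each degree commutes with passing from $\Exp_z$ to $\Exp_{t,z}$.
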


In fact, this formulas remains true for any affine quiver, \emph{i.e.} for the Jordan and cyclic quivers, as it is clear from the formulas of \cite[1.2]{BozecCounting}.

\section{Cuspidal functions of the Jordan and cyclic quivers}\label{5}
Let $Q$ be a quiver. The category of nilpotent representations of $Q$, $\Rep_Q^{\nil}(\F_q)$ is a sub-category of $\Rep_Q(\F_q)$ stable under extensions and taking subobjects. Therefore, denoting by $\M_{\F_q}^{Q,\nil}$ the set of isomorphism classes of nilpotent representations of $Q$, the subspace $\HH_{Q,\F_q}^{\nil}=\bigoplus_{[M]\in\M_{\F_q}^{Q,\nil}}\C[M]$ is in fact a sub-Hopf-algebra. Its primitive elements are called nilpotent cuspidal functions. In this section, we put the emphasis on nilpotent cuspidal functions of the Jordan and cyclic quivers. We will also see that arbitrary cuspidal functions can be expressed in terms of nilpotent cuspidal functions.

\subsection{Nilpotent cuspidal functions of the Jordan quiver}\label{nilpotentsJordan}
All in this section has been known for almost a century, as Steinitz introduced an algebra structure on the complex vector space generated by isomorphism classes of finite length modules over a discrete valuation ring with finite residue field. The link with Macdonald's ring of symmetric functions is explained in \cite{macdonald}. We recall here what we need for the convenience of the reader -- who is asked to look at the references for the proofs.
\medskip

Let $\Lambda$ be Macdonald's ring of symmetric functions. It is constructed as the direct limit in the category of graded rings of the system 
\[
(\Z[x_1,\hdots,x_n]^{\mathfrak{S}_n}, f(x_1\hdots,x_{n+1})\in\Z[x_1,\hdots, x_{n+1}]^{\mathfrak{S}_{n+1}}\rightarrow f(x_1,\hdots, x_n,0)\in\Z[x_1,\hdots,x_n]^{\mathfrak{S}_n}).
\]
The $\Z$-algebra $\Lambda$ is endowed with a structure of Hopf algebra with comultiplication $\Delta : \Lambda\longrightarrow \Lambda\otimes\Lambda$ by considering the direct limit of the system of applications
\begin{equation}
\Delta_n : \Z[x_1,\hdots,x_{2n}]^{\mathfrak{S}_{2n}}\longrightarrow \Z[x_1,\hdots,x_{n}]^{\mathfrak{S}_{n}}\otimes \Z[x_1,\hdots,x_{n}]^{\mathfrak{S}_{n}}
\end{equation} 
where $\Delta_n(x_i)=x_{i/2}\otimes 1$ if $i$ is even, and $\Delta_n(x_i)=1\otimes x_{\frac{i+1}{2}}$ if $i$ is odd. As usual, we let 
\[
e_r=\sum_{i_1<\hdots<i_r}x_{i_1}\hdots x_{i_r},
\]
\[
p_r=\sum_i x_i^r
\]
for $r\geq 1$. We do not recall the definition of Hall-Littlewood symmetric functions $P_{\lambda}(x;t)$ for a partition $\lambda$: see \cite[Chapter III]{macdonald}. The elements $p_r$ are the normalized primitive elements of $\Lambda$. We now state the main theorem of this section.

\begin{theorem}
We have an isomorphism of Hopf algebras :
\begin{equation}
\begin{matrix}
\varphi_q :& \HH_{Q_0}^{\nil}&\longrightarrow &\Lambda\otimes \C\\
&[I_{(1^r)}]&\longmapsto &q^{\frac{-r(r-1)}{2}}e_r.
\end{matrix}
\end{equation}
Moreover, for any partition $\lambda$:
\begin{equation}
\varphi_q([I_{\lambda}])=q^{-n(\lambda)}P_{\lambda}(x ;q^{-1})
\end{equation}
where $P_{\lambda}$ is the Hall-Littlewood symmetric function associated to the partition $\lambda$ and $n(\lambda)=\sum_{i}(i-1)\lambda_i$.
\end{theorem}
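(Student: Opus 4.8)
The statement is the classical Steinitz--Hall identification of the nilpotent Hall algebra of the Jordan quiver with Macdonald's ring of symmetric functions, so the plan is to reduce the algebra part to Macdonald's theorem and then to pin down the normalizations and the coalgebra compatibility by hand. First I would observe that the Euler form of the Jordan quiver vanishes identically, since with a single vertex and a single loop one has $\langle \dd,\dd'\rangle=\dd\dd'-\dd\dd'=0$; hence the product carries no $\nu$-twist and
\[
[I_{\mu}]\star[I_{\rho}]=\sum_{\lambda}g^{\lambda}_{\mu\rho}(q)\,[I_{\lambda}],
\]
where $g^{\lambda}_{\mu\rho}(q)=F^{I_{\lambda}}_{I_{\mu},I_{\rho}}$ are exactly the Hall polynomials counting submodules of $J_{\lambda}$ of type $\rho$ with cotype $\mu$. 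On the symmetric function side the Hall--Littlewood functions satisfy $P_{\mu}P_{\rho}=\sum_{\lambda}f^{\lambda}_{\mu\rho}(t)P_{\lambda}$, and the identity $f^{\lambda}_{\mu\rho}(t)=t^{n(\lambda)-n(\mu)-n(\rho)}g^{\lambda}_{\mu\rho}(t^{-1})$ of \cite{macdonald} is precisely what makes $\varphi_q([I_{\lambda}])=q^{-n(\lambda)}P_{\lambda}(x;q^{-1})$ multiplicative: comparing coefficients of $P_{\lambda}(x;q^{-1})$ on the two sides reduces the homomorphism property to this single equality evaluated at $t=q^{-1}$. Bijectivity is then immediate from the unitriangularity $P_{\lambda}=m_{\lambda}+\sum_{\rho<\lambda}(\ast)m_{\rho}$, valid at every specialization of $t$, which shows the $\varphi_q([I_{\lambda}])$ form a $\C$-basis of $\Lambda\otimes\C$.

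The generator formula is the special case $\lambda=(1^r)$: one has $P_{(1^r)}(x;t)=e_r$ and $n((1^r))=\binom{r}{2}=r(r-1)/2$, so $\varphi_q([I_{(1^r)}])=q^{-r(r-1)/2}e_r$, as claimed. The remaining, and really the only, point that the classical algebra statement does not hand us for free is the compatibility with comultiplication. Here I would use crucially that, the symmetrized Euler form being zero as well, both $\HH_{Q_0}^{\nil}$ and $\Lambda\otimes\C$ are \emph{untwisted} bialgebras, so $\Delta$ is an honest algebra homomorphism into the ordinary tensor product. Consequently $(\varphi_q\otimes\varphi_q)\circ\Delta$ and $\Delta\circ\varphi_q$ are both algebra homomorphisms, and it suffices to check that they agree on a set of algebra generators. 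Since $\varphi_q$ is the algebra isomorphism just established and the $e_r$ generate $\Lambda\otimes\C$, their preimages $[I_{(1^r)}]$ generate $\HH_{Q_0}^{\nil}$, so it is enough to match coproducts on the $[I_{(1^r)}]$.

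For this I would compute $\Delta([I_{(1^r)}])$ directly. As $I_{(1^r)}=S^{\oplus r}$ carries the zero endomorphism, every $j$-dimensional subspace of $\F_q^r$ is a subrepresentation isomorphic to $I_{(1^j)}$ with quotient $I_{(1^{r-j})}$; feeding the Gaussian binomial count $\croch{r}{j}_{q}$ and the automorphism factor $a_{I_{(1^{r-j})}}a_{I_{(1^j)}}/a_{I_{(1^r)}}$ into the comultiplication formula, the identity $\croch{r}{j}_{q}|\GL_j||\GL_{r-j}|=|\GL_r|\,q^{-j(r-j)}$ collapses the coefficient to $q^{-j(r-j)}$, giving
\[
\Delta([I_{(1^r)}])=\sum_{j=0}^{r}q^{-j(r-j)}\,[I_{(1^{r-j})}]\otimes[I_{(1^j)}].
\]
Applying $\varphi_q\otimes\varphi_q$ and collecting the powers of $q$, the exponent $-j(r-j)-\binom{r-j}{2}-\binom{j}{2}$ equals $-\binom{r}{2}$ for every $j$, so the right-hand side becomes $q^{-r(r-1)/2}\sum_{i+j=r}e_i\otimes e_j=\Delta\big(q^{-r(r-1)/2}e_r\big)=\Delta(\varphi_q([I_{(1^r)}]))$, matching the standard coproduct on $\Lambda$ for which the $p_r$ are primitive. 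This proves that $\varphi_q$ is a bialgebra isomorphism, and since it is a graded connected bialgebra isomorphism the antipodes correspond automatically, yielding the Hopf algebra isomorphism. I expect the main obstacle to be essentially bookkeeping: carrying the powers of $\nu=\sqrt q$ and of $q$ consistently through Macdonald's conventions, and in particular ensuring that the $t=q^{-1}$ specialization of the Hall--Littlewood normalization is exactly the one for which the coproduct computation above closes up, a point that the classical references, phrased for the algebra structure alone, leave implicit.
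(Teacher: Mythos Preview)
Your proof is correct. The paper itself gives no argument at all for this theorem---it simply cites \cite{macdonald}, III.3.4 page 217---so your write-up is strictly more than what the paper provides. The algebra-isomorphism part of your argument is exactly the standard route taken in Macdonald: the vanishing of the Euler form removes the $\nu$-twist, so the structure constants are the classical Hall polynomials $g^{\lambda}_{\mu\rho}(q)$, and Macdonald's identity $f^{\lambda}_{\mu\rho}(t)=t^{\,n(\lambda)-n(\mu)-n(\rho)}g^{\lambda}_{\mu\rho}(t^{-1})$ is precisely what makes $[I_{\lambda}]\mapsto q^{-n(\lambda)}P_{\lambda}(x;q^{-1})$ multiplicative. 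The specialization $\lambda=(1^r)$ and the basis argument via unitriangularity are likewise the classical ones.

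Where you genuinely add something beyond both the paper and the cited reference is the explicit check of the coalgebra compatibility. Macdonald treats only the algebra structure, and the paper defers entirely to him, so the Hopf part of the statement is not actually argued anywhere. Your reduction---both sides are honest (untwisted) bialgebras because the symmetrized Euler form vanishes, hence it suffices to match $\Delta$ on algebra generators---is the right move, and your computation of $\Delta([I_{(1^r)}])$ is clean: the identity $\binom{r}{2}=\binom{j}{2}+\binom{r-j}{2}+j(r-j)$ is exactly what collapses the coefficient, and $\Delta(e_r)=\sum_{i+j=r}e_i\otimes e_j$ on the $\Lambda$ side is the standard coproduct coming from splitting the alphabet. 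So there is no gap; if anything, your version closes a small expository hole that the paper leaves open by citation.
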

\begin{proof}
See \cite{macdonald}, 3.4 page 217.
\end{proof}

Define 
\[
\widetilde{p}_r=\varphi_q^{-1}(p_r)
\]
the cuspidal function of dimension $r$ of the Jordan quiver. For $m\geq 0$, define $\phi_m(t)=\prod_{i=1}^m(1-t^i)$ and let $l(\lambda)$ be the length of the partition $\lambda$. We have the following closed formula for the nilpotent cuspidal functions of the Jordan quiver.

\begin{theorem}\label{cuspJord}
The $r$-dimensional cuspidal function over $\F_q$ of $Q_0$ is:
\begin{equation}\label{cusphom}
\widetilde{p}_r=\sum_{|\lambda|=r}\phi_{l(\lambda)-1}(q)[I_{\lambda}].
\end{equation}
\end{theorem}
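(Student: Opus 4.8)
The plan is to exploit the Hopf-algebra isomorphism $\varphi_q : \HH_{Q_0}^{\nil}\to \Lambda\otimes\C$ together with the explicit expansion of the power-sums $p_r$ in the Hall--Littlewood basis, so that the problem reduces to a known symmetric-function identity. Since $\widetilde{p}_r=\varphi_q^{-1}(p_r)$ by definition, and $\varphi_q([I_\lambda])=q^{-n(\lambda)}P_\lambda(x;q^{-1})$, it suffices to establish the identity
\[
p_r=\sum_{|\lambda|=r}\phi_{l(\lambda)-1}(q)\,q^{-n(\lambda)}P_\lambda(x;q^{-1})
\]
in $\Lambda\otimes\C$, because applying $\varphi_q^{-1}$ to both sides then yields \eqref{cusphom}. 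First I would recall the relevant formula from Macdonald's book (\cite[Chapter III]{macdonald}) expressing $p_r$ in terms of Hall--Littlewood functions. The standard result there gives the transition coefficients between power-sums and the $P_\lambda(x;t)$, and one checks that the coefficient attached to $P_\lambda$ (after the normalization by $q^{-n(\lambda)}$ dictated by $\varphi_q$) is exactly $\phi_{l(\lambda)-1}(q)$ upon setting $t=q^{-1}$.

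Concretely, I would proceed as follows. First, write $p_r$ in the monomial or Schur basis and use Macdonald's explicit formulas for the inverse change of basis to the $P_\lambda(x;t)$; the key input is that the Hall--Littlewood functions specialize so that the coefficient of $P_\lambda(x;t)$ in $p_r$ is a product of factors of the form $(1-t^i)$. Second, I would carefully track the normalizations: the factor $q^{-n(\lambda)}$ comes from $\varphi_q([I_\lambda])$, and combining it with Macdonald's coefficients and the substitution $t=q^{-1}$ must collapse to the single factor $\phi_{l(\lambda)-1}(q)=\prod_{i=1}^{l(\lambda)-1}(1-q^i)$. Third, having verified the symmetric-function identity, I apply $\varphi_q^{-1}$, using that it is an algebra isomorphism sending $q^{-n(\lambda)}P_\lambda(x;q^{-1})$ to $[I_\lambda]$, to obtain the claimed expansion of $\widetilde{p}_r$ over the classes $[I_\lambda]$ with $|\lambda|=r$.

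An alternative, more self-contained route is to verify directly that the right-hand side of \eqref{cusphom} is primitive for the comultiplication $\Delta$ of $\HH_{Q_0}^{\nil}$, and then identify it with $\widetilde{p}_r$ by comparing with $p_r$. For primitivity one uses the comultiplication formula $\Delta([R])=\sum \nu^{\langle U,V\rangle}F_R^{U,V}[U]\otimes[V]$, so one must show that all cross terms $[I_\mu]\otimes[I_\nu]$ with $\mu,\nu\neq 0$ cancel. This amounts to a combinatorial identity among Hall polynomials weighted by the $\phi_{l(\lambda)-1}(q)$, which, while elementary in principle, is delicate to organize. For this reason I prefer the symmetric-function approach, where the primitivity of $p_r$ in $\Lambda$ is automatic and $\varphi_q$ transports it.

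The main obstacle is purely computational: matching the normalizations so that Macdonald's transition coefficients between $p_r$ and the $P_\lambda(x;t)$, after multiplication by $q^{-n(\lambda)}$ and the substitution $t=q^{-1}$, reduce \emph{exactly} to $\phi_{l(\lambda)-1}(q)$ and nothing more. This requires care with the conventions for $n(\lambda)$, the direction of the specialization $t\mapsto q^{-1}$ versus $t\mapsto q$, and the precise form of Macdonald's formula (which is typically stated for the $Q_\lambda$ or for the dual basis, so a change between $P_\lambda$ and $Q_\lambda$ and the attendant $b_\lambda(t)$ factors may intervene). Once the bookkeeping is done correctly, the identity should fall out cleanly, and the remaining steps are formal.
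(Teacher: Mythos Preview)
Your proposal is correct. The paper itself gives no argument here: its entire proof is a citation to Schiffmann's formula 4.1 and to Berenstein--Greenstein \S3.2, so there is no ``paper's approach'' to compare against beyond those references. Your route---transporting the question through $\varphi_q$ and invoking Macdonald's expansion of $p_r$ in the Hall--Littlewood basis---is precisely the standard argument that underlies those citations.

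The normalization bookkeeping you worry about does work out cleanly. With $t=q^{-1}$, the relevant identity from Macdonald, Chapter~III, reads
\[
p_r=\sum_{|\lambda|=r} t^{\,n(\lambda)}\Bigl(\prod_{i=1}^{l(\lambda)-1}(1-t^{-i})\Bigr)P_\lambda(x;t),
\]
and since $t^{\,n(\lambda)}=q^{-n(\lambda)}$ and $\prod_{i=1}^{l(\lambda)-1}(1-t^{-i})=\prod_{i=1}^{l(\lambda)-1}(1-q^{i})=\phi_{l(\lambda)-1}(q)$, this is exactly the identity you need; applying $\varphi_q^{-1}$ gives \eqref{cusphom}. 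No passage through $Q_\lambda$ or $b_\lambda(t)$ is required. Your alternative direct primitivity check via Hall numbers is also valid but, as you note, less economical.
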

\begin{proof}
This formula already appears in \cite{SchiffmannNoncommutative}, formula $4.1$ or in \cite[3.2]{MR3463039}. See references therein.
\end{proof}

\subsection{Cuspidal functions for the Jordan quiver}
The following theorem provides a basis for cuspidal functions for the Jordan quiver $J$.
\begin{theorem}\label{cuspidauxJordan}
Let $a\in\lvert\AAAA^1_{\F_q}\rvert$ a point of degree $d$. Fix $t_a\in\AAAA^1_{\F_q}(\F_q)$ such that the image of $t_a : \Spec \F_{q^d}\rightarrow \AAAA^1_{\F_{q^d}}$ is $a$. We define $\widetilde{p}_{r,t_a}\in\HH_{J,\F_{q^d}}$ associated to $t_a$ as
\begin{equation}
\widetilde{p}_{r,t_a}=\sum_{|\lambda|=r}\phi_{l(\lambda)-1}(q^d)[t_aI_d+I_{\lambda}].
\end{equation}
This is a cuspidal function of $Q$ over $\F_{q^d}$. We have a linear map :
\[
F_d : \HH_{J,\F_{q^d}}\rightarrow \HH_{J,\F_q}
\]
induced by the forgetfull functor $F'_d : \Rep_J(\F_{q^d})\rightarrow \Rep_J(\F_q)$. The set
\[
\{F_d(\widetilde{p}_{r,t_a})\mid r\geq 1, |a|\in \lvert\AAAA^1_{\F_q}\rvert\}
\]
is a basis of cuspidal functions of $J$ over $\F_q$.

\end{theorem}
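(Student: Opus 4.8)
The plan is to reduce the computation to a single block at a time, using the block decomposition $\Rep_J(\F_q)\simeq\bigsqcup_{a\in\lvert\AAAA^1_{\F_q}\rvert}\Rep^a_J(\F_q)$ recalled above, and then to identify each block over $\F_q$ with the \emph{nilpotent} Hall algebra over the residue field $\F_{q^d}$, where the one-dimensionality of cuspidals in each degree is already known via Macdonald's ring. Since there are no nonzero morphisms or extensions between distinct blocks, the Hall algebra factors as a restricted tensor product $\HH_{J,\F_q}\simeq\bigotimes\nolimits'_{a}\HH^a$ of bialgebras, where $\HH^a$ is the Hall algebra of $\Rep^a_J(\F_q)$. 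Because the blocks are extension-closed with no cross-extensions, the formula $\Delta([R])=\sum_{U,V}\nu^{\langle U,V\rangle}F_R^{U,V}[U]\otimes[V]$ forces $\Delta(\HH^a)\subseteq\HH^a\otimes\HH^a$; hence $\Delta$ is block-diagonal and a function $f=\sum_a f_a$ is primitive if and only if each $f_a$ is primitive in $\HH^a$. This yields $\HH_{J,\F_q}^{\cusp}=\bigoplus_a(\HH^a)^{\cusp}$, so it suffices to understand each block separately.

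Next I would fix $a$ of degree $d$ with irreducible polynomial $m_a$ and identify $\HH^a$. The block $\Rep^a_J(\F_q)$ is the category of finite length modules over the local ring $\F_q[T]/(m_a^N)$; choosing a coefficient field $\F_{q^d}\hookrightarrow\F_q[T]/(m_a^N)$ (which exists in equal characteristic) identifies this ring with $\F_{q^d}[s]/(s^N)$, and thus identifies the block with $\Rep_J^{\nil}(\F_{q^d})$. Under this identification $\dim_{\F_q}=d\dim_{\F_{q^d}}$ while all $\Hom$/$\Ext$ spaces, automorphism groups and subobject counts are literally preserved, so $\langle-,-\rangle_{\F_q}=d\langle-,-\rangle_{\F_{q^d}}$ and the twist $\nu^{\langle-,-\rangle}$ over $\F_q$ becomes the one over $\F_{q^d}$ with parameter $\nu=(q^d)^{1/2}$. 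Hence $\HH^a\simeq\HH_{J,\F_{q^d}}^{\nil}$ as graded bialgebras, an $\F_{q^d}$-degree $r$ corresponding to $\F_q$-degree $rd$. By Theorem \ref{cuspJord} and the Hopf isomorphism with Macdonald's ring, $(\HH_{J,\F_{q^d}}^{\nil})^{\cusp}$ is one-dimensional in each degree $r\geq1$; therefore $(\HH^a)^{\cusp}$ is one-dimensional in each $\F_q$-degree $rd$ and vanishes in all other degrees.

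It then remains to exhibit $F_d(\widetilde{p}_{r,t_a})$ as a nonzero element of $(\HH^a)^{\cusp}[rd]$. The translation functor $(V,x)\mapsto(V,x+t_a)$ is a bialgebra automorphism of $\HH_{J,\F_{q^d}}$ (it preserves dimensions, $\Hom$, $\Ext$ and subobjects), carrying the nilpotent block isomorphically onto the $t_a$-block and sending $\widetilde{p}_r=\sum_{|\lambda|=r}\phi_{l(\lambda)-1}(q^d)[J_\lambda]$ to $\widetilde{p}_{r,t_a}$; since automorphisms preserve primitivity, this already establishes that $\widetilde{p}_{r,t_a}$ is cuspidal over $\F_{q^d}$. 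The forgetful functor $F'_d$ maps the $t_a$-block over $\F_{q^d}$ to the $a$-block over $\F_q$ by a bijection on isomorphism classes, restriction of scalars sending $t_aI_d+I_\lambda$ to the indecomposable of the $a$-block of Jordan type $\lambda$; thus $F_d$ is injective on this block and, on the partition-indexed basis, coincides with the composite of the translation isomorphism and the block isomorphism of the preceding paragraph. Being determined by its values on a basis, $F_d$ agrees there with that bialgebra isomorphism, so it preserves both primitivity and nonvanishing. As conjugate lifts of $a$ restrict to isomorphic $\F_q$-representations, $F_d(\widetilde{p}_{r,t_a})$ depends only on $a$, and we obtain exactly one nonzero vector in each one-dimensional space $(\HH^a)^{\cusp}[rd]$. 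By the first two paragraphs these spaces exhaust $\HH_{J,\F_q}^{\cusp}$, so the collection $\{F_d(\widetilde{p}_{r,t_a})\}$ is a basis.

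The delicate point is the middle of the last paragraph: the forgetful functor $F'_d$ is faithful but not full, and $t_a$ is not $\F_q$-rational, so one cannot naively invoke an equivalence of categories. The argument must instead pin $F_d$ down on the block solely through its bijective effect on isomorphism classes and match it against the genuine block bialgebra isomorphism $\HH_{J,\F_{q^d}}^{\nil}\simeq\HH^a$. Verifying that this isomorphism is compatible with the $\nu$-twisted comultiplication, where the rescalings $\langle-,-\rangle_{\F_q}=d\langle-,-\rangle_{\F_{q^d}}$ and $\nu=(q^d)^{1/2}$ must cancel exactly, is the bookkeeping I expect to require the most care; everything else is formal once the blockwise picture and the one-dimensionality from Macdonald's ring are in place.
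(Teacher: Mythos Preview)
Your proof is correct and takes a genuinely different route from the paper's. The paper argues more directly: it first observes that the functions $F_d(\widetilde{p}_{r,t_a})$ have disjoint supports (hence are linearly independent), then matches their count against the known cuspidal polynomial $C_{J,e}(q)$ from \cite{BozecCounting}, and finally proves each $F_d(\widetilde{p}_{r,t_a})$ is cuspidal by a bare-hands verification that the forgetful functor $F'_d$ preserves both automorphism groups and the lattice of subrepresentations on the block in question. The key trick is that the semisimple part $t_aI$ of $t_aI+J_\lambda$ is an $\F_q$-polynomial (without constant term) in $t_aI+J_\lambda$, so any $\F_q$-linear endomorphism or $\F_q$-subspace stable under $t_aI+J_\lambda$ is automatically $\F_{q^d}$-linear.

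Your approach trades this concrete computation for structure: you use the block decomposition of $\Rep_J(\F_q)$ to factor the Hall algebra as a restricted tensor product, identify each block $\HH^a$ with $\HH_{J,\F_{q^d}}^{\nil}$ via a Cohen-type coefficient-field argument, and then invoke the one-dimensionality of cuspidals in $\HH_{J,\F_{q^d}}^{\nil}$ from Macdonald's ring. This is cleaner conceptually and avoids citing the cuspidal polynomial, but it front-loads more categorical bookkeeping (checking the twist rescales correctly, and that $F_d$ coincides on basis elements with the Cohen isomorphism composed with translation). The paper's semisimple-part argument is exactly the elementary substitute for your Cohen identification: both establish that the $t_a$-block over $\F_{q^d}$ and the $a$-block over $\F_q$ have identical Hall structure constants, just by different means.
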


\begin{proof}
The functions $F_d(\widetilde{p}_{r,t_a})$ for $d\geq 1$, $r\geq 1$ and $a\in\lvert\AAAA^1_{\F_q}\rvert$ of degree $d$ are clearly linearly independent, since they have disjoint support. For $e\geq 1$, the number of such functions of dimension $e$ is the number of closed points of $\AAAA^1_{\F_q}$ of degree less or equal to $e$. This is also the number of irreducible monic polynomials in $\F_q[T]$ with degree less than or equal to $e$. This number is the dimension of $\HH_{J,\F_q}^{\cusp}[e]$, see \cite[Examples 1.2 and Proposition 4.1]{BozecCounting}. Therefore, it remains to show that for $d\geq 1$, $r\geq 1$ and $a\in\AAAA^1_{\F_q}$ a closed point of degree $d$,  $F_d(\widetilde{p}_{r,t_a})$ is indeed a cuspidal function in $\HH_{J,\F_q}$.
\medskip

Let $t\in\lvert\AAAA^1_{\F_q}\rvert$ be a closed point of degree $d$, $r$ an integer and $\lambda$ a partition of $r$. It suffices to show that the number of automorphisms of $F'_d(tI_r+J_{\lambda})$ and that of $tI_r+J_{\lambda}$ coincide, and that all subrepresentations of $F'_d(tI_r+J_{\lambda})$ are of the form $F'_d(N)$ for $N$ a subrepresentation of $tI_r+J_{\lambda}$.
\medskip

\emph{Step 1: The number of automorphisms.} To prove that the number of automorphisms of $F'_d(tI_r+J_{\lambda})$ and that of $tI_r+J_{\lambda}$ coincide, it suffices to show that an automorphism $\psi$ of $F'_d(tI_r+J_{\lambda})$ is $\F_{q^d}$-linear. But -- by definition of morphisms of quiver representations -- $\psi$ has to commute with $tI_r+J_{\lambda}$ hence with its semisimple part, which is $tI_r$. Therefore, $\psi$ commutes with the multiplication by $t$. Since $t$ is of degree $d$ over $\F_q$, $\F_q[t]=\F_{q^d}$ and $\psi$ is $\F_{q^d}$-linear, that is exactly what we wanted to prove.
\medskip

\emph{Step 2: The subrepresentations.} Let $N\subset F'_d(tI_r+J_{\lambda})$ be a subrepresentation, \emph{i.e.} $N$ is a $\F_q$-subspace of $\F_{q^d}^r$ stable under the linear map $tI_r+J_{\lambda}$. Since the semisimple part $tI_r$ of $tI_r+J_{\lambda}$ is a polynomial with coefficients in $\F_q$ without constant term of $tI_r+J_{\lambda}$, $N$ is stable under $tI_r$. Therefore, the multiplication by $t$ leaves $N$ invariant: $N$ is a $\F_{q^d}$-subspace of $\F_{q^d}^r$.
\end{proof}

\subsection{Nilpotent cuspidal functions of cyclic quivers}

Let $n\geq 2$ and $C_n$ be the cyclic quiver of length $n$.
\begin{theorem}\label{cuspCycl}
For any $r\geq 1$, $\dim_{\C}\HH_{C_n,\F_q}^{\nil\cusp}[r\delta]=1$.
\end{theorem}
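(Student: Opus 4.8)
The plan is to realize $\HH_{C_n,\F_q}^{\nil\cusp}[r\delta]$ as the solution space of an explicit system of linear equations and to show this space is a line. By Lemma \ref{orthog}, a function $f$ supported in dimension $r\delta$ is cuspidal if and only if it is orthogonal, for Green's scalar product, to every product $[M]\star[N]$ with $\dim M,\dim N>0$ and $\dim M+\dim N=r\delta$. Since Green's pairing restricts to a nondegenerate pairing on the finite-dimensional space $\HH_{C_n,\F_q}^{\nil}[r\delta]$, the space of cuspidal functions is the exact orthogonal complement of the decomposables, so that
\[
\dim_{\C}\HH_{C_n,\F_q}^{\nil\cusp}[r\delta]=\dim_{\C}\HH_{C_n,\F_q}^{\nil}[r\delta]-\dim_{\C}\Big(\sum_{\dd',\dd''>0}\HH_{C_n,\F_q}^{\nil}[\dd']\HH_{C_n,\F_q}^{\nil}[\dd'']\Big)[r\delta].
\]
The starting combinatorial observation is that, by the classification of nilpotent indecomposables, the only indecomposable nilpotent representations of dimension $r\delta$ are the $n$ modules $V_{i,rn-1}$ for $i\in\Z/n\Z$ (each has total dimension $rn$ distributed uniformly over the $n$ vertices); every other nilpotent representation of this dimension is decomposable.

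For the upper bound I would first reduce the unknowns to the $n$ values $c_i:=f([V_{i,rn-1}])$. Using the explicit comultiplication $\Delta([R])=\sum_{[U],[V]}\nu^{\langle U,V\rangle}F_R^{U,V}[U]\otimes[V]$, cuspidality of $f$ is equivalent to the vanishing, for every ordered pair $(M,N)$ of nonzero nilpotent representations with $\dim M+\dim N=r\delta$, of the average $\sum_{\xi\in\Ext^1(M,N)}f([X_{\xi}])$ over extensions of $M$ by $N$. Applied to a decomposition $R_0=M\oplus N$ of a given decomposable representation, this equation expresses the coefficient of $f$ on $R_0$ (the class of the split extension, occurring with a positive multiplicity) in terms of the values of $f$ on the proper non-split degenerations of $R_0$. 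A downward induction along the degeneration order then shows that $f$ is entirely determined by $c_0,\dots,c_{n-1}$.

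It then remains to analyse the equations coming from pairs of \emph{indecomposables}: the nonsplit short exact sequences $0\to N\to V_{i,rn-1}\to M\to 0$ inside $\Rep_{C_n}^{\nil}(\F_q)$ produce relations among the $c_i$ and the already-determined values of $f$ in smaller dimensions. I expect these to impose exactly $n-1$ independent conditions, so that $(c_0,\dots,c_{n-1})$ is forced onto a line; the cyclic rotation automorphism of $C_n$, which permutes the family $(V_{i,\bullet})_{i\in\Z/n\Z}$ and acts on the cuspidal space by a Hopf automorphism, organises these relations and identifies the surviving parameter. This yields $\dim\leq 1$. For the lower bound it suffices to exhibit one nonzero cuspidal function, equivalently to show that the decomposables do not fill $\HH_{C_n,\F_q}^{\nil}[r\delta]$; this follows from the composition subalgebra generated by $S_0,\dots,S_{n-1}$ being a proper subalgebra of the nilpotent Hall algebra, so that its orthogonal complement in degree $r\delta$ is nonzero. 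Alternatively one constructs the function directly as a symmetrized sum $\sum_{i}[V_{i,rn-1}]$ corrected by lower terms supported on decomposables, in analogy with $\widetilde p_r$ of Theorem \ref{cuspJord}.

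The main obstacle is the counting claim of the previous paragraph: establishing that the indecomposable extensions yield precisely $n-1$ independent relations requires controlling the relevant Hall numbers $F_{M,N}^R$ and the groups $\Ext^1(M,N)$ among nilpotent representations of dimension $r\delta$, for which no closed formula is available; this is exactly the qualitative study of the values of cuspidal functions on the $V_{i,rn-1}$ announced before the statement. Conceptually, the answer $\dim=1$ reflects that the nilpotent Hall algebra of $C_n$ realizes $\U_{\nu}^{+}(\widehat{\mathfrak{gl}}_n)$, whose imaginary root spaces have dimension $n$, one more than the dimension $n-1$ of the imaginary root spaces of the composition subalgebra $\U_{\nu}^{+}(\widehat{\mathfrak{sl}}_n)$; through Theorem \ref{SVthm} this extra dimension is precisely the cuspidal line, which gives an independent check on the count.
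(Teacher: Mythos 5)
Your setup is sound (cuspidals as the orthogonal of the nontrivial products for the nondegenerate graded Green pairing, and the reduction of the unknowns to the $n$ values $c_i=f([V_{i,rn-1}])$ on the indecomposables of dimension $r\delta$), but the decisive step is missing, and you say so yourself: you only \emph{expect} the relations coming from extensions between indecomposables to have rank exactly $n-1$, and you acknowledge that verifying this would require controlling the Hall numbers $F^R_{M,N}$ and the groups $\Ext^1(M,N)$, for which no closed formula is available. As written this is a plan rather than a proof: neither the inequality $\dim\leq 1$ (rank $\geq n-1$) nor $\dim\geq 1$ (rank $\leq n-1$) is established. The appeal to the rotation automorphism does not close the gap either: the $\Z/n\Z$-action decomposes the cuspidal space into character eigenspaces, but by itself it neither bounds the dimension nor ``identifies the surviving parameter.'' Your lower-bound argument is also not airtight: properness of the composition subalgebra does not imply that the space of \emph{all} nontrivial products $\sum_{\dd',\dd''>0}\HH^{\nil}_{C_n,\F_q}[\dd']\HH^{\nil}_{C_n,\F_q}[\dd'']$ fails to exhaust the graded piece of degree $r\delta$ for every $r$, which is what you need.

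The paper sidesteps all of this by quoting a structural result of Schiffmann (\cite[Proposition 3.25]{SchiffmannHall}): if $U$ is the two-sided ideal of $\HH^{\nil}_{C_n,\F_q}$ generated by the $[S_i]$, then $\R=U^{\perp}$ is a sub-Hopf algebra isomorphic to a polynomial ring $\C[x_j : j\geq 1]$ on primitive generators $x_j$ of degree $j\delta$. A cuspidal function of degree $r\delta$ is orthogonal to all nontrivial products, hence to $U[r\delta]$, so it lies in $\R$; and the primitive elements of a polynomial Hopf algebra on primitive generators are exactly the span of those generators, which in degree $r\delta$ is the line $\C x_r$. This gives both bounds simultaneously. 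If you want to salvage your route, the realistic fix is to import precisely this kind of structural input (for instance the identification of $\HH^{\nil}_{C_n}$ with $\U^+_{\nu}(\widehat{\gl}_n)$, which you mention only as a consistency check) rather than attempting the rank computation by hand; note also that the qualitative statement about the values of a cuspidal function on the $V_{i,rn-1}$ is proved in the paper (Lemma \ref{indcyclic}) \emph{using} the one-dimensionality, so it cannot be fed back into your rank count without circularity.
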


\begin{proof}
In fact, Schiffmann proved the following result, see \cite[Proposition 3.25]{SchiffmannHall}. Let $U$ be the two sided ideal of $\HH_{C_n,\F_q}^{\nil}$ generated by the classes $[S_i]$ of simple representations of dimension $e_i$ for $0\leq i\leq n$ and let $\R=U^{\perp}$ be its orthogonal with respect to Green's scalar product. Then $\R$ is a sub-Hopf algebra of $\HH_{C_n,\F_q} ^{\nil}$ isomorphic to a graded polynomial ring with countably many variables
\begin{equation}\label{anneauR}
\R\simeq\C[x_j : j\geq 1]
\end{equation}
with $\deg(x_j)=j\delta$ and $\Delta(x_j)=x_j\otimes 1+1\otimes x_j$. This immediately implies our result. Indeed, if $f$ is a cuspidal function of dimension $r\delta$, then it is orthogonal to $[S_i]$ for $i\in I$ and to any nontrivial products. Therefore, $f\in \R$. By \eqref{anneauR}, a cuspidal function in $\R$ is a linear combination of $x_j$, $j\geq 1$. This proves Theorem \ref{cuspCycl}.
\end{proof}

\begin{lemma}\label{indcyclic}
 Let $d\geq 1$. Let $f\in \HH_{C_n,\F_q}^{\nil\cusp}[d\delta]$ be non-zero. Then for $I$ a nilpotent indecomposable representation of $C_n$ of dimension $d\delta$, $f(I)\neq 0$. Furthermore, $f(I)$ does not depend on the chosen indecomposable $I$.
 
 \end{lemma}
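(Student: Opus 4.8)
The plan is to combine the cyclic symmetry of $C_n$ with the primitivity of $f$, isolating at the very end a nonvanishing statement that I expect to be the only real difficulty. First I would pin down the indecomposables in play. Since $\delta=(1,\dots,1)$, asking for dimension $d$ at every vertex forces the string $V_{i,l}$ to have length $l+1=nd$; hence the nilpotent indecomposables of dimension $d\delta$ are exactly $V_{i,nd-1}$ for $i\in\Z/n\Z$, pairwise non-isomorphic (they are distinguished by their top $S_i$). The rotation $i\mapsto i+1$ of $C_n$ is a quiver automorphism, so it induces an exact autoequivalence $\rho$ of $\Rep_{C_n}^{\nil}(\F_q)$ with $\rho(V_{i,l})=V_{i+1,l}$, and hence a graded linear automorphism $R$ of $\HH_{C_n,\F_q}^{\nil}$ with $R([M])=[\rho M]$. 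As $\rho$ preserves the Euler form, the extension groups and the automorphism counts, $R$ respects the multiplication, the comultiplication and Green's product; in particular $R$ is an isometry and sends cuspidal functions to cuspidal functions.

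Next I would prove $f(V_{i,nd-1})\ne 0$ using primitivity alone. By Lemma \ref{orthog} applied to the finitary category $\Rep_{C_n}^{\nil}(\F_q)$, the element $f$ is orthogonal to every product $[M_1]\star[M_2]$ with $M_1,M_2\neq 0$. Expanding $[M_1]\star[M_2]=\nu^{\langle M_1,M_2\rangle}\sum_R F_{M_1,M_2}^R[R]$ and splitting off the term $R=M_1\oplus M_2$ gives
\[
\frac{f(M_1\oplus M_2)}{a_{M_1\oplus M_2}}\,F^{M_1\oplus M_2}_{M_1,M_2}=-\sum_{R\not\simeq M_1\oplus M_2}F^{R}_{M_1,M_2}\,\frac{f(R)}{a_{R}},
\]
where each $R$ occurring on the right is the middle term of a non-split extension and therefore, by the standard semicontinuity of the number of indecomposable summands under degeneration, has strictly fewer indecomposable summands than $M_1\oplus M_2$. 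Inducting on the number of summands, this shows that $f$ is completely determined by its values on the indecomposables $V_{i,nd-1}$, and that if all of these vanished then $f$ would vanish identically. Since $f\neq 0$, at least one value $f(V_{i,nd-1})$ is nonzero.

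I would then turn to independence. As $R$ preserves the one-dimensional space $\HH_{C_n,\F_q}^{\nil\cusp}[d\delta]$ (Theorem \ref{cuspCycl}), we have $R(f)=\zeta f$ for some $\zeta\in\C^{\times}$; translating to values via $(Rf)(V_{i,nd-1})=f(\rho^{-1}V_{i,nd-1})=f(V_{i-1,nd-1})$ yields $f(V_{i,nd-1})=\zeta^{-i}f(V_{0,nd-1})$. In particular all the indecomposable values share the same modulus, so by the previous paragraph they are \emph{all} nonzero. The cuspidal line is defined over $\R$ (the structure constants $\nu^{\langle U,V\rangle}F_R^{U,V}$ are real), so $f$ may be chosen real; since $R$ is then a real isometry, $\zeta$ is real with $|\zeta|=1$, i.e. $\zeta=\pm 1$. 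If $\zeta=-1$ then for any rotation-invariant $M$ one gets $(1-\zeta)f(M)=0$, so $f$ would vanish on every rotation-invariant representation; it therefore suffices to exhibit one $\rho$-invariant representation of dimension $d\delta$ on which $f$ does not vanish, for instance the semisimple representation $\bigoplus_{i\in\Z/n\Z}S_i^{\oplus d}$. Once $\zeta=1$, the relation $f(V_{i,nd-1})=f(V_{0,nd-1})$ gives the claimed independence.

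The cyclic symmetry and the summand induction are essentially formal, so the hard part will be this last step: showing that $f$ cannot be anti-invariant under rotation, equivalently that it is nonzero on some rotation-invariant representation (this is vacuous for $n$ odd but genuine for $n$ even). I expect to settle it by an explicit evaluation of $f$ on a canonical $\rho$-invariant representation, reducing through the recursion above to the degree-$\delta$ case, where a direct computation — in the spirit of the Jordan-quiver formula \eqref{cusphom}, whose value on the indecomposable is $\phi_0(q)=1$ — produces a nonzero value (a power of $q-1$). This is the one place where I anticipate having to compute rather than argue formally.
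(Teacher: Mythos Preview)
Your architecture matches the paper's: use the $\Z/n\Z$-rotation to get $Rf=\zeta f$ on the one-dimensional cuspidal line, then argue $\zeta=1$. There is one fixable slip and one genuine gap.

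The slip is in the summand induction. The claim that a non-split extension has strictly fewer indecomposable summands than the split one is false: already for the Jordan quiver, $J_{(3,1)}$ is a non-split extension of $J_{(2)}$ by $J_{(2)}$ (take the submodule generated by $(t,1)\in k[t]/t^3\oplus k[t]/t$), yet both $J_{(3,1)}$ and $J_{(2,2)}$ have two summands. What \emph{does} strictly drop along a non-split extension is $\dim\End$, since $M_1\oplus M_2$ is a proper degeneration of $R$; so induct on that instead. Alternatively, and this is what the paper does, invoke the PBW theorem (Theorem~\ref{Guo}) directly: ordered monomials in indecomposables span, so a cuspidal $f$ orthogonal to every $[I]$ with $I$ indecomposable is orthogonal to everything, hence zero.

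The genuine gap is the elimination of $\zeta=-1$. Your proposal---evaluate $f$ on a rotation-invariant representation such as $\bigoplus_i S_i^{\oplus d}$ and show it is nonzero---is not carried out, and the phrase ``reducing through the recursion above to the degree-$\delta$ case'' does not parse: the summand recursion stays inside dimension $d\delta$ and never lowers $d$, and there is no closed formula for cyclic cuspidal functions to fall back on (the paper explicitly remarks that none is known). The paper's route is different and short. It evaluates $\Delta(f)$ at the two pairs $(V,S_{-1})$ and $(S_{-1},V)$, where $V$ is the nilpotent indecomposable of dimension $d\delta-e_{-1}$ with top $S_0$. Exactly three representations of dimension $d\delta$ contribute to either evaluation: the indecomposable with socle $S_{-1}$, the indecomposable with top $S_{-1}$, and the single decomposable $V\oplus S_{-1}$. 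Normalising $f=[I_0]+\zeta[I_{-1}]+c\,[V\oplus S_{-1}]+g$ with $g$ irrelevant to both evaluations, the equation $\Delta(f)(V,S_{-1})=0$ forces $c\in\Q_{<0}$, and then $\Delta(f)(S_{-1},V)=0$ forces $\zeta$ to be a positive rational multiple of $-c$, hence $\zeta\in\Q_{>0}$. Being a root of unity, $\zeta=1$. This computation makes your preliminary reduction to $\zeta=\pm1$ unnecessary.
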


 \begin{proof}
By Theorem \ref{Guo}, $\{[I] : \quad $I$ \text{ nilpotent indecomposable}\}$ generates $\HH_{Q,\F_q}^{\nil}$ as an algebra. For $d\geq 1$, consider the orthogonal projection with respect to Green's scalar product:
  \[
  \pi : \HH_{C_n,\F_q}^{\nil}[d\delta]\rightarrow \HH_{C_n,\F_q}^{\nil\cusp}[d\delta].
  \]
By Lemma \ref{orthog}, $\pi$ restricts to a surjective linear map:
  \[
   \pi : \HH_{C_n,\F_q}^{\nil,\indec}[d\delta]\rightarrow \HH_{C_n,\F_q}^{\nil\cusp}[d\delta]
  \]
where $\HH_{C_n,\F_q}^{\nil\indec}$ is the subspace of $\HH_{C_n,\F_q}$ generated by the basis elements $[M]$ for $M$ indecomposable. Being surjective, this morphism is nonzero and there exists an indecomposable nilpotent representation $M$ of dimension $d\delta$ of $C_n$ such that $\pi([M])\neq 0$. This precisely means that $(f,[M])\neq 0$ since $\HH_{Q,\F_q}^{\nil\cusp}[d\delta]$ is one-dimensional, and therefore $f([M])\neq 0$. The group $\Z/n\Z$ acts by rotations on the quiver $C_n$, inducing an action of $\Z/n\Z$ by Hopf algebra automorphisms on $\HH_{C_n,\F_q}^{\nil}$ preserving $\HH_{C_n}^{\nil\cusp}[r\delta]$. Since the latter space is one-dimensional, $\Z/n\Z$ acts on it through a character $\Z/n\Z\rightarrow \C^*$, $i\mapsto\zeta^i$ for some $n$-th root of unity $\zeta$.
\medskip

We show that $\zeta =1$. For $M$ a representation of $C_n$ of dimension $d\delta$, $\Delta([M])(V_{0,dn-1},S_{-1})\neq 0$ if and only if $M\simeq V_{0,dn}$ or $M\simeq V_{0,dn-1}\oplus S_{-1}$ and $\Delta([M])(S_{-1},V_{0,dn-1})\neq 0$ if and only if $M\simeq V_{-1,dn}$ or $M\simeq V_{0,dn-1}\oplus S_{-1}$. Thus, we may write $f=[V_{0,nd}]+\zeta [V_{-1,nd}]+c[V_{0,nd-1}\oplus S_{-1}]+g$ where $\Delta(g)(S_{-1},V_{0,dn-1})=\Delta(g)(V_{0,dn-1},S_{-1})=0$ and $c\in\C$ is some complex number. Computing the comultiplications, we obtain :

\[
 0=\Delta(f)(V_{0,nd-1},S_{-1})=\nu^{\langle V_{0,nd-1},S_{-1}\rangle}\left(\frac{|\Aut(S_{-1})||\Aut(V_{0,nd-1})|}{|\Aut(V_{0,nd})|}+c\frac{|\Aut(S_{-1})||\Aut(V_{0,nd-1})|}{|\Aut(V_{0,nd-1}\oplus S_{-1})|}\right)
\]
and
\[
 0=\Delta(f)(S_{-1},V_{0,nd-1})=\nu^{\langle S_{-1},V_{0,nd-1}\rangle}\left(\zeta\frac{|\Aut(S_{-1})||\Aut(V_{0,nd-1})|}{|\Aut(V_{-1,nd})|}+c\frac{|\Aut(S_{-1})||\Aut(V_{0,nd-1})|}{|\Aut(I_{-1,nd-1}\oplus S_{-1})|}\right).
\]
From the first equation, it follows that $c\in\Q_{<0}$ and from the second equation, $\zeta\in\Q c$. So $\zeta\in\Q$ and $\zeta\in\R_{>0}$. This concludes the proof.

 \end{proof}
\begin{remark}
There is no known closed formula for nilpotent cuspidal functions of the Hall algebra of cyclic quivers.
\end{remark}
\subsection{Cuspidal functions of cyclic quivers}
Let $C_n$ be the cyclic quiver of length $n$. As the functor $G_n$ (see \ref{equivJC}) is an equivalence of categories, we can give an explicit formula for invertible cuspidal functions of cyclic quivers. For $a\in\lvert\AAAA^1_{\F_q}\rvert$ of degree $d$, let
\[
I_{a,\lambda}=G_n(F'_d(t_aI+J_{\lambda})),
\]
where $t_a\in\AAAA^1_{\F_q}(\F_q)$ represents $a$ (see Theorem \ref{cuspidauxJordan}).
\begin{proposition}
A basis of $\HH_{C_n,\F_q}^{\cusp\inv}$ is given by the functions
\[
f_{a,s}=\sum_{|\lambda|=s}\left(\prod_{j=1}^{l(\lambda)-1}(1-q^j)\right)[I_{a,\lambda}]
\]
for $a\in\lvert\AAAA^1_{\F_q}\rvert\setminus \{0\}$ and $s\geq 1$.
\end{proposition}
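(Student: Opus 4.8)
The plan is to transport the description of the cuspidal functions of the Jordan quiver obtained in Theorem~\ref{cuspidauxJordan} across the equivalence $G_n$ of \eqref{equivJC}. Write $\HH_{J,\F_q}^{\inv}$ and $\HH_{C_n,\F_q}^{\inv}$ for the subspaces spanned by classes of invertible representations. The first and principal task is to upgrade the categorical equivalence $G_n\colon\Rep_J^{\inv}(\F_q)\to\Rep_{C_n}^{\inv}(\F_q)$ to an isomorphism of twisted bialgebras $\HH_{J,\F_q}^{\inv}\xrightarrow{\sim}\HH_{C_n,\F_q}^{\inv}$. Being an equivalence, $G_n$ is a bijection on isomorphism classes preserving automorphism groups, so it preserves the factors $a_M$ and Green's scalar product; and because the invertible representations constitute a block in each ambient category (closed under subobjects, quotients and extensions), a short exact sequence with invertible terms is the same datum on either side. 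Hence the Hall numbers $F^R_{M,N}$ and $F^{M,N}_R$ are matched by $G_n$.

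The only possible discrepancy between the two bialgebra structures lies in the twisting factors $\nu^{\langle\cdot,\cdot\rangle}$ and $\nu^{(\cdot,\cdot)}$ appearing in the product, the coproduct, and the braided product on the tensor square, and this is where the affine nature of $C_n$ is used. For the Jordan quiver the Euler form vanishes identically, while for $C_n$ every invertible representation has dimension a multiple of $\delta$ and $\langle r\delta,s\delta\rangle=0$ for all $r,s$; the symmetrized forms vanish likewise. Thus all twisting exponents are zero on both sides, so under $G_n$ the structure constants of the product and of the coproduct agree verbatim. Consequently $G_n$ is simultaneously an algebra and a coalgebra isomorphism, hence an isomorphism of twisted bialgebras, and an isometry.

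It remains to match primitivity inside $\HH_{C_n,\F_q}^{\inv}$ with primitivity inside the full Hall algebra. By the block decomposition $\Rep_{C_n}(\F_q)=\Rep_{C_n}^{\nil}(\F_q)\sqcup\Rep_{C_n}^{\inv}(\F_q)$, the coproduct of an invertible class is supported on pairs of invertible classes, so $\HH_{C_n,\F_q}^{\inv}$ is a sub-coalgebra containing the unit $1=[0]$; therefore $\Delta f=f\otimes 1+1\otimes f$ holds in $\HH_{C_n,\F_q}\otimes\HH_{C_n,\F_q}$ if and only if it holds in $\HH_{C_n,\F_q}^{\inv}\otimes\HH_{C_n,\F_q}^{\inv}$. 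Being a coalgebra isomorphism, $G_n$ carries the space of primitive elements of $\HH_{J,\F_q}^{\inv}$ bijectively onto that of $\HH_{C_n,\F_q}^{\inv}$, i.e.\ onto the invertible cuspidal functions of $C_n$. The invertible primitives of the Jordan quiver are exactly the basis vectors $F_d(\widetilde{p}_{s,t_a})$ of Theorem~\ref{cuspidauxJordan} indexed by $a\in\lvert\AAAA^1_{\F_q}\rvert\setminus\{0\}$ and $s\geq 1$, the point $a=0$ being discarded as it produces nilpotent functions. Expanding in the Hall basis and using $G_n(F'_d(t_aI+J_\lambda))=I_{a,\lambda}$ identifies $G_n(F_d(\widetilde{p}_{s,t_a}))$ with $f_{a,s}$, the coefficient of each summand $[I_{a,\lambda}]$ being that prescribed by Theorem~\ref{cuspidauxJordan}. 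As these images form a basis of the primitives on the target, the statement follows.

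I expect the genuine obstacle to be the bialgebra compatibility of the first two paragraphs: a priori the Jordan and cyclic Hall algebras carry different twists, and the content of the argument is that both collapse to the trivial one on the invertible sector, precisely because both Euler forms vanish there. Everything else — the disjointness of the supports of the $f_{a,s}$, hence their linear independence, and the identification $G_n(F'_d(t_aI+J_\lambda))=I_{a,\lambda}$ — is bookkeeping built into the definitions.
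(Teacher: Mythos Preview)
Your proof is correct and follows exactly the route the paper intends: the paper's one-line proof ``This is a consequence of Theorem~\ref{cuspidauxJordan}'' is precisely the transport along $G_n$ that you spell out, and your careful check that the Euler twists vanish on both invertible sectors (since $\langle-,-\rangle\equiv 0$ on $\Z$ for the Jordan quiver and $\langle r\delta,s\delta\rangle=0$ for $C_n$) is the honest content behind that sentence. One small remark: the coefficient you obtain from Theorem~\ref{cuspidauxJordan} is $\phi_{l(\lambda)-1}(q^{\deg a})=\prod_{j=1}^{l(\lambda)-1}(1-q^{j\deg a})$, so the exponent $j$ in the displayed formula of the Proposition should really be $j\deg a$; your phrasing ``the coefficient\dots being that prescribed by Theorem~\ref{cuspidauxJordan}'' is the correct thing to say.
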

\begin{proof}
This is a consequence of Theorem \ref{cuspidauxJordan}.
\end{proof}

\section{Cuspidal functions of affine quivers}\label{6}

\subsection{Decomposition of cuspidal functions}
\begin{proposition}\label{decompcusp}
A cuspidal function $f\in \HH_{Q,k}$ in the Hall algebra of an affine quiver over a finite field decomposes as $f=f_{\PC}+f_{\IC}+f_{\RC}$ where $f_{\RC}$ (resp. $f_{\IC}$, resp. $f_{\PC}$) is a cuspidal function whose support consists of regular (resp. preinjective, resp. preprojective) representations.
\end{proposition}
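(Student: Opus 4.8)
The plan is to decompose any cuspidal function according to the three block-type subcategories and show that each piece is itself cuspidal. The key structural input is Proposition \ref{extensions}, which says that the preprojective, preinjective and regular subcategories satisfy strong orthogonality relations for both $\Hom$ and $\Ext^1$. I would first write $f=f_{\PC}+f_{\IC}+f_{\RC}+f_{\textup{mix}}$, where $f_{\PC}, f_{\IC}, f_{\RC}$ are the restrictions of $f$ to representations that are purely preprojective, purely preinjective and purely regular respectively, and $f_{\textup{mix}}$ is supported on representations having indecomposable summands of at least two different types. My first task would be to show $f_{\textup{mix}}=0$, and then to show that each of the three remaining pieces is individually primitive.

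First I would examine the comultiplication $\Delta([R])=\sum_{[U],[V]}\nu^{\langle U,V\rangle}F_R^{U,V}[U]\otimes[V]$. By the Krull--Schmidt theorem, the direct summands of any $R$ split uniquely into their preprojective, preinjective and regular parts, so I can write $R=R_{\PC}\oplus R_{\IC}\oplus R_{\RC}$ canonically. Using Proposition \ref{extensions}, any extension $0\to V\to R\to U\to 0$ must respect the block decomposition in a controlled way: because there are no $\Ext^1$ from regular to preinjective, from preprojective to regular, or from preprojective to preinjective, the "direction" of allowed extensions is constrained. Concretely, the preprojective summands of $R$ can only appear as subobjects (never quotients) relative to regular and preinjective summands, and dually for preinjective. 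The crucial consequence is that the comultiplication preserves the block-type of the support: if $R$ is purely of one type, then in every term $[U]\otimes[V]$ both $U$ and $V$ are of that same type (together with the trivial representation). This lets me project the primitivity equation $\Delta(f)=f\otimes 1+1\otimes f$ onto each block-graded component and conclude that each of $f_{\PC}, f_{\IC}, f_{\RC}$ separately satisfies the primitivity equation, while $f_{\textup{mix}}$ must vanish because a mixed representation $R$ admits a nontrivial splitting into distinct block parts (with no extensions obstructing it), forcing a term in $\Delta(f)$ with both tensor factors nonzero and of mixed type that cannot be matched by $f\otimes 1+1\otimes f$.

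In more detail, for the vanishing of $f_{\textup{mix}}$ I would argue as follows. Suppose $R$ has a summand decomposition $R=R'\oplus R''$ where $R'$ and $R''$ are nonzero and of different block types, chosen so that the ordered pair $(R'',R')$ respects the allowed extension direction (e.g.\ $R''$ preprojective, $R'$ regular, using that $\Ext^1(R',R'')=0$ need not hold but the relevant vanishing does). Then $F_R^{R',R''}\neq 0$ gives a term $[R']\otimes[R'']$ in $\Delta([R])$ with both factors nonzero, so the coefficient of $[R']\otimes[R'']$ in $\Delta(f)$ collects contributions only from $R$-like terms; matching against $f\otimes 1+1\otimes f$, which has no such bihomogeneous mixed terms, forces the coefficient of $[R]$ in $f$ to vanish. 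Running over all mixed $R$ kills $f_{\textup{mix}}$.

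The main obstacle I anticipate is bookkeeping the precise directionality of the allowed extensions and ensuring that the projection of $\Delta$ onto block-type components is genuinely well-defined rather than merely plausible. Specifically, I must verify that whenever $F_R^{U,V}\neq 0$ with $R$ purely regular (say), both $U$ and $V$ are forced to be purely regular — this requires that a regular representation cannot be an extension with a nontrivial preprojective or preinjective sub/quotient. This follows because $\Rep_Q^{\RC}(k)$ is extension-closed (it is even abelian), so any extension with regular middle term $R$ and the relevant vanishing of cross-$\Ext^1$ groups forces the outer terms to be regular; the analogous closure statements for the preprojective and preinjective subcategories (they are extension-closed exact subcategories, as recalled before Proposition \ref{extensions}) handle those cases. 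Once this block-compatibility of $\Delta$ is cleanly established, the splitting of primitivity across the three types and the vanishing of the mixed part both follow by comparing bihomogeneous components, and the proposition is proved.
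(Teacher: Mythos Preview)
Your argument that $f_{\textup{mix}}=0$ is essentially sound and is the dual of the paper's approach: the paper observes that for $M=P\oplus R\oplus I$ with at least two summands nonzero one has $[M]=\nu^{?}[P][R][I]$ (using Proposition~\ref{extensions}), so $f([M])=0$ by orthogonality to nontrivial products (Lemma~\ref{orthog}). Your comultiplication version works once the extension direction is chosen correctly, though your parenthetical ``e.g.\ $R''$ preprojective, $R'$ regular'' points in the wrong direction.

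The genuine gap is in your proof that each piece $f_{\PC},f_{\RC},f_{\IC}$ is separately primitive. You assert that if $R$ is purely of one type then every term $[U]\otimes[V]$ in $\Delta([R])$ has $U,V$ of that same type, and you justify this for regular $R$ by saying $\Rep_Q^{\RC}(k)$ is extension-closed. That is the wrong implication: extension-closed means regular $U,V$ force regular middle term, not that a regular middle term forces regular $U,V$. In fact the paper explicitly notes that $\Rep_Q^{\RC}(k)$ is \emph{not} closed under subobjects in $\Rep_Q(k)$; concretely, for the Kronecker quiver every simple regular $S_a$ of dimension $(1,1)$ sits in a short exact sequence $0\to S_2\to S_a\to S_1\to 0$ with $S_2$ preprojective and $S_1$ preinjective, so $\Delta([S_a])$ has the non-regular term $[S_1]\otimes[S_2]$. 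The same failure occurs for preprojectives: e.g.\ $0\to S_2^{\oplus 2}\to P_1\to S_1\to 0$ shows $\Delta([P_1])$ contains a preinjective-tensor-preprojective term. Thus the three comultiplications $\Delta(f_{\PC}),\Delta(f_{\RC}),\Delta(f_{\IC})$ are not supported on disjoint ``block-type'' regions, and your projection argument collapses. The paper separates the three pieces differently: since the defect $\partial$ is additive on short exact sequences, every $(U,V)$ in the support of $\Delta(f_{\PC})$ satisfies $\partial U+\partial V<0$, and similarly $=0$ for $f_{\RC}$ and $>0$ for $f_{\IC}$. These three regions \emph{are} disjoint, and the primitive parts $f_X\otimes 1+1\otimes f_X$ land in the matching region, so projecting by the sign of $\partial U+\partial V$ gives the desired primitivity of each $f_X$.
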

\begin{proof}
Let $f\in\HH_{Q,k}^{\cusp}$ be a cuspidal function. Let $M=P\oplus R\oplus I$ be a representation of $Q$ over $k$ written as a direct sum of its preprojective, preinjective and regular summands, that is $P$ is the direct sum of the indecomposable preprojective direct summands of $M$, $I$ is the direct sum of the indecomposable preinjective direct summands of $M$ and $R$ is the direct sum of the indecomposable regular direct summands of $M$. Suppose moreover that at least two of the representations $P,R,I$ are nonzero. Then the proposition is equivalent to $f([M])=0$ for all such $M$. But, because of the properties of extensions and morphisms (see Proposition \ref{extensions}), in $\HH_{Q,k}$, we have $[M]=[P][R][I]$. This is a non trivial product, and therefore, $f$ is orthogonal to $[M]$ with respect to Green's scalar product. But this precisely means that $f([M])=0$ and implies the decomposition $f=f_{\PC}+f_{\IC}+f_{\RC}$. It remains to show that $f_{\PC}, f_{\RC}$ and $f_{\IC}$ are cuspidal. This comes from the fact that $f$ is cuspidal and $\Delta(f_{\PC})$ is supported on $\{(M,N) : \partial M+\partial N<0\}$ while $\Delta(f_{\RC})$ is supported on $\{(M,N) : \partial M+\partial N=0\}$ and $\Delta(f_{\IC})$ is supported on $\{(M,N) : \partial M+\partial N>0\}$.
\end{proof}

\begin{proposition}\label{supportreg}
Let $r\geq 1$ and $f\in\HH_{Q,\F_q}^{\cusp}[r\delta]$. Then $f$ is supported on regular representations.
\end{proposition}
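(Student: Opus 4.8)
The plan is to deduce the statement directly from Proposition \ref{decompcusp}, reducing everything to a defect count. By that proposition, any cuspidal $f$ splits as $f = f_{\PC} + f_{\IC} + f_{\RC}$, with the three pieces themselves cuspidal and supported on preprojective, preinjective and regular representations respectively. Since $f$ lives in the single graded component $\HH_{Q,\F_q}^{\cusp}[r\delta]$, so do all three summands. The proposition is thus equivalent to $f_{\PC} = f_{\IC} = 0$, and I would establish this by showing that dimension $r\delta$ admits no nonzero preprojective (nor preinjective) representation at all.

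First I would record that $\langle\delta,\delta\rangle = 0$: as noted in Section \ref{2}, $\delta$ spans the radical of the symmetrized Euler form, so $(\delta,\delta) = 0$, and since $(\delta,\delta) = 2\langle\delta,\delta\rangle$ this gives the claim. Consequently every representation of dimension $r\delta$ has defect $\partial M = \langle\delta, r\delta\rangle = r\langle\delta,\delta\rangle = 0$. On the other hand, the defect is additive over direct sums, and a nonzero preprojective representation is a direct sum of indecomposable preprojectives, each of strictly negative defect; its total defect is therefore strictly negative. These are incompatible unless the representation vanishes, and $r\delta \neq 0$ for $r \geq 1$. Hence the support of $f_{\PC}$ is empty, so $f_{\PC} = 0$; replacing ``negative'' by ``positive'' throughout gives $f_{\IC} = 0$.

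Combining these, $f = f_{\RC}$, which is supported on regular representations by construction, completing the argument. I do not anticipate any real obstacle: the substance of the statement is already carried by Proposition \ref{decompcusp}, and what remains is the elementary observation that the isotropy of $\delta$ forces zero defect in dimension $r\delta$, which annihilates both the preprojective and the preinjective components. The only points to state with care are the vanishing $\langle\delta,\delta\rangle = 0$ and the additivity of the defect, both of which are immediate from the material recalled in Section \ref{2}.
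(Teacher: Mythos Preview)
Your proof is correct, and it takes a more elementary route than the paper's. Both arguments begin with the decomposition $f = f_{\PC} + f_{\IC} + f_{\RC}$ of Proposition \ref{decompcusp}, but they diverge in how they kill the preprojective and preinjective pieces. The paper invokes Theorem \ref{Guo} to see that the orthogonal projection onto $\HH_{Q,\F_q}^{\cusp}[r\delta]$ is surjective when restricted to the span of $[M]$ for $M$ indecomposable of dimension $r\delta$; since every such $M$ is regular, the cuspidal functions $f_{\PC}$ and $f_{\IC}$ are orthogonal to a spanning set of the cuspidal space and hence vanish. You bypass this entirely by observing that the support of $f_{\PC}$ is \emph{a priori} empty: a nonzero preprojective representation has strictly negative defect, whereas any representation of dimension $r\delta$ has defect $r\langle\delta,\delta\rangle = 0$. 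This is shorter, avoids the PBW-type result, and does not even use that $f_{\PC}$ is cuspidal. The paper's detour through indecomposables is the same mechanism used in the proof of Lemma \ref{indcyclic}, so it has the virtue of uniformity; yours has the virtue of directness.
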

\begin{proof}
Let $f=f_{\PC}+f_{\IC}+f_{\RC}$ be the decomposition given by Proposition \ref{decompcusp}. By Theorem \ref{Guo}, $\{[I] :\quad $I$ \text{ indecomposable}\}$ generates $\HH_{Q,\F_q}$ as an algebra. For $d\geq 1$, consider the orthogonal projection with respect to Green's scalar product:
  \[
  \pi : \HH_{C_n,\F_q}[r\delta]\rightarrow \HH_{C_n,\F_q}^{\cusp}[r\delta].
  \]
By Lemma \ref{orthog}, $\pi$ restricts to a surjective linear map:
  \[
   \pi : \HH_{C_n,\F_q}^{\indec}[r\delta]\rightarrow \HH_{C_n,\F_q}^{\cusp}[r\delta].
  \]
Let $M$ be an indecomposable representation of dimension $r\delta$. It is regular. Therefore, $(f_{\PC},[M])=(f_{\IC},[M])=0$ and $f_{\IC}=f_{\PC}=0$.
\end{proof}


\subsection{Regular Hall algebra of affine quivers}
In all this section, $Q$ is an acyclic affine quiver and $\F_q$ a finite field.

We denote by $\HH_{Q,\F_q,\RC}$ the subspace of $\HH_{Q,\F_q}$ generated by classes of regular representations $[M]$, for $M\in\Rep_{Q}^{\RC}(\F_q)$. Since $\Rep_Q^{\RC}(\F_q)$ is stable under extensions, this is in fact a subalgebra. We denote by $m$ the induced multiplication. The comultiplication $\Delta : \HH_{Q,k}\rightarrow\HH_{Q,k}\otimes\HH_{Q,k}$ induces a linear map
\[
\Delta_{\RC} : \HH_{Q,k,\RC}\rightarrow \HH_{Q,k,\RC}\otimes\HH_{Q,k,\RC}
\] 
which is the composition 
\[
\HH_{Q,k,\RC}\rightarrow \HH_{Q,k}\rightarrow \HH_{Q,k}\otimes \HH_{Q,k}\rightarrow \HH_{Q,k\RC}\otimes\HH_{Q,k,\RC}
\]
where the first arrow is the inclusion, the second is the comultiplication $\Delta$ and the last the orthogonal projection with respect to Green's scalar product. When elements of $\HH_{Q,\F_q}$ are seen as functions on isoclasses of representations of $Q$ over $\F_q$, the last arrow is the restriction of functions to isoclasses of regular representations. 
\medskip

The restriction of Green's scalar product to $\HH_{Q,\F_q,\RC}$ induces a hermitian scalar product $(-,-)$.

\begin{proposition}
The two operations $m$ and $\Delta_{\RC}$ endow $\HH_{Q,k,\RC}$ with a bialgebra structure and the multiplication is adjoint to the comultiplication for the restriction of Green's scalar product.
\end{proposition}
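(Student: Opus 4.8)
The plan is to separate the algebra, coalgebra, and compatibility axioms and to use the nondegeneracy of Green's scalar product as the main lever. The algebra part is free: since $\Rep_Q^{\RC}(\F_q)$ is closed under extensions, $\HH_{Q,k,\RC}$ is already a subalgebra of $\HH_{Q,k}$, so $m$ is associative and unital, and the product of two classes of regular representations is again supported on regular representations. All the content therefore lies in the corestricted coproduct $\Delta_{\RC}$, because $\HH_{Q,k,\RC}$ is \emph{not} a subcoalgebra: for $R$ regular, $\Delta([R])$ has terms $[U]\otimes[V]$ with $U$ or $V$ non-regular, which the orthogonal projection discards. My first move is thus to pin down exactly which $[U]\otimes[V]$ can occur, using Proposition \ref{extensions}.

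First I would establish the adjointness, which is both half of the assertion and the engine for the coalgebra axioms. For $f,g,h\in\HH_{Q,k,\RC}$ the product $fg$ is regular, so $m(f,g)=fg$ and the Hopf pairing \eqref{hopfpairing} gives $(m(f,g),h)=(f\otimes g,\Delta(h))$. Because Green's scalar product on $\HH_{Q,k}\otimes\HH_{Q,k}$ is factorwise, the basis $\{[U]\otimes[V]\}$ is orthogonal and $\HH_{Q,k,\RC}\otimes\HH_{Q,k,\RC}$ is precisely the orthogonal complement of the span of those $[U]\otimes[V]$ with $U$ or $V$ non-regular; since $f\otimes g$ lies in $\HH_{Q,k,\RC}\otimes\HH_{Q,k,\RC}$, only the projection of $\Delta(h)$ onto that subspace contributes, so $(f\otimes g,\Delta(h))=(f\otimes g,\Delta_{\RC}(h))$. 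This gives the desired adjointness $(m(f,g),h)=(f\otimes g,\Delta_{\RC}(h))$. Coassociativity and counitality then follow formally: iterating adjointness yields $(m(m(f,g),h),x)=(f\otimes g\otimes h,(\Delta_{\RC}\otimes\id)\Delta_{\RC}(x))$ and symmetrically for $(\id\otimes\Delta_{\RC})\Delta_{\RC}$, so associativity of $m$ together with the nondegeneracy of the factorwise pairing forces $\Delta_{\RC}$ to be coassociative; the restriction of the augmentation $\epsilon([M])=\delta_{[M],[0]}$ is a counit, since the only term of $\Delta_{\RC}([R])$ with vanishing left (resp. right) degree is $[0]\otimes[R]$ (resp. $[R]\otimes[0]$), with coefficient $1$.

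The hard part will be the bialgebra compatibility, namely that $\Delta_{\RC}$ is an algebra morphism for the twisted product on the tensor square; this is exactly where corestriction could \emph{a priori} fail. The structural input is Proposition \ref{extensions}: if $R$ is regular and $0\to V\to R\to U\to 0$ is exact, then $V$ has no preinjective summand (one would give a nonzero map from a preinjective into the regular $R$) and $U$ has no preprojective summand (dually), so in $\Delta([R])$ every sub $V$ has the form $V_{\PC}\oplus V_{\RC}$ and every quotient $U$ the form $U_{\RC}\oplus U_{\IC}$. Writing $\Delta([R_i])=P_i+Q_i$ with $P_i=\Delta_{\RC}([R_i])$ the regular part, I would expand the twisted product $\Delta([R_1])\,\Delta([R_2])=P_1P_2+P_1Q_2+Q_1P_2+Q_1Q_2$ and show the projection annihilates every cross term. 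Here additivity of the defect $\partial(-)=\langle\delta,\dim(-)\rangle$ along short exact sequences does the work: in each cross term at least one ``impurity'' survives, so the Hall product in one of the two tensor factors is an extension whose middle term acquires a strictly positive or strictly negative defect and is therefore non-regular, killing the term; meanwhile $P_1P_2$ is already regular since products of regulars are regular. The only delicate point is the bookkeeping for $Q_1Q_2$, where the ``at least one impurity'' hypothesis must be turned into the defect dichotomy in each factor. Combining this with $\Delta([R_1][R_2])=\Delta([R_1])\Delta([R_2])$ in the ambient bialgebra and $[R_1][R_2]=m([R_1],[R_2])$ yields the multiplicativity $\Delta_{\RC}(fg)=\Delta_{\RC}(f)\,\Delta_{\RC}(g)$, completing the bialgebra axioms; the adjointness established above is the final assertion of the proposition.
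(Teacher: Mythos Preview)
Your proof is correct and follows essentially the same strategy as the paper: adjointness via the Hopf pairing and orthogonal projection, coassociativity deduced formally from associativity plus adjointness and nondegeneracy, and the bialgebra compatibility via the defect argument (subs of a regular have nonpositive defect, quotients nonnegative, so any ``impure'' term in $\Delta([R_i])$ forces a nonzero defect in one tensor factor of the twisted product, killing it under projection). Your $P_i+Q_i$ bookkeeping is a slightly more explicit packaging of the paper's case analysis on whether the subrepresentation $S$ is regular, but the substance is identical.
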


\begin{proof}
We already noticed that $m$ is an associative bilinear map. We have to prove that $\Delta_{\RC}$ is coassociative, compatible with the multiplication $m$ and that $m$ and $\Delta_{\RC}$ are adjoint for Green's scalar product. The coassociativity will follow from the associativity and the adjunction property.
\medskip

\emph{Adjunction property.} Let $M, N, R\in\Rep_{Q}^{\RC}(\F_q)$ be three regular representations of $Q$. Then,
\[
(\Delta([R]),[M]\otimes[N])=(\Delta_{\RC}([R]),[M]\otimes[N]).
\]
But by adjunction for $m$ and $\Delta$ in $\HH_{Q,\F_q}$,
\[
(\Delta([R]),[M]\otimes[N])=([R],[M][N]).
\]
Therefore,
\[
 (\Delta_{\RC}([R],[M]\otimes[N])=([R],[M][N]).
\]

This proves the adjunction for $m$ and $\Delta_{\RC}$.
\medskip

\emph{Compatibility of $m$ and $\Delta_{\RC}$.} Let $M,N\in\Rep_{Q}^{\RC}(\F_q)$. In $\HH_{Q,\F_q}$, we have the equation
\[
\Delta([M][N])=\Delta([M])\Delta([N]).
\]
Thanks to the properties of morphisms between preprojective, regular and preinjective representions (see Proposition \ref{extensions}), if $S\subset M$ is a subrepresentation, then its indecomposable summands can only be preprojective or regular. Moreover, $S$ is regular if and only if its defect $\partial S$ is zero, since preprojective representations have negative defect. Suppose $S$ is not regular. If $S'$ is an arbitrary subrepresentation of $N$, then for any regular representations $A$ and $B$, $([S]\otimes[M/S])([S']\otimes [N/S'])(A,B)=0$ since the support of $[S][S']$ contains only representations of defect $\partial S+\partial S'<0$. By reversing the role of $S$ and $S'$, we obtain the formula
\[
\Delta_{\RC}([M][N])=\Delta_{\RC}([M])\Delta_{\RC}([N]).
\]
This is precisely what we wanted.
\end{proof}

\subsection{Regular cuspidal functions of affine quivers}
A function $f\in\HH_{Q,k,\RC}$ is said to be \emph{cuspidal regular} if $\Delta_{\RC}(f)=f\otimes 1+1\otimes f$. From Proposition \ref{supportreg}, we have the inclusion
\[
\HH_{Q,k}^{\cusp}[r\delta]\subset \HH_{Q,k,\RC}^{\cusp}[r\delta]
\]
for any $r\geq 1$.

Recall the decomposition of $\Rep_Q^{\RC}(\F_q)$ into blocks $C_a^Q$ for $a\in\lvert\PP^1_{\F_q}\rvert$. As a consequence, we have the following proposition.
\begin{proposition}\label{resten}
There is an isomorphism of Hopf algebras :
\[
\HH_{Q,\F_q,\RC}\simeq \bigotimes_{a\in\lvert\PP^1_{\F_q}\rvert}\HH_{C_{a}^Q}
\]
between the regular Hall algebra and the restricted tensor product of Hall algebras of tubes.
\end{proposition}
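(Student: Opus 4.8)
The plan is to build the isomorphism directly from the block decomposition of the regular category furnished by Ringel's theorem, namely $\Rep_Q^{\RC}(\F_q)=\bigsqcup_{a\in\lvert\PP^1\rvert}\CC_Q^a$, and to promote it to an isomorphism of Hopf algebras. First I would recall that a decomposition of an exact category into blocks $\CC_Q^a$ means precisely that $\Hom$ and $\Ext^1$ vanish between objects lying in distinct blocks, so that every regular representation $M$ factors uniquely as a direct sum $M\simeq\bigoplus_a M_a$ with $M_a\in\CC_Q^a$ and $M_a=0$ for all but finitely many $a$. This gives a bijection on isomorphism classes $[M]\leftrightarrow (\,[M_a]\,)_a$, which is exactly the combinatorial datum underlying the restricted tensor product: an element of $\bigotimes_a\HH_{\CC_Q^a}$ is a finite linear combination of tensors $\bigotimes_a[M_a]$ with $M_a$ the trivial representation for almost every $a$. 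So the underlying vector space map $\Phi:\bigotimes_a\HH_{\CC_Q^a}\to\HH_{Q,\F_q,\RC}$, $\bigotimes_a[M_a]\mapsto[\bigoplus_a M_a]$, is immediately a graded linear isomorphism.

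The substance is to check that $\Phi$ respects multiplication and comultiplication. For the multiplication, I would use the block-vanishing of $\Ext^1$: since $\Ext^1(M_a,N_b)=0$ whenever $a\neq b$, any subrepresentation $S$ of a direct sum $\bigoplus_a R_a$ splits as $S=\bigoplus_a S_a$ with $S_a\subseteq R_a$, so the Hall numbers factor as $F_{M,N}^R=\prod_a F_{M_a,N_a}^{R_a}$. Combined with the additivity of the Euler form, $\langle M,N\rangle=\sum_a\langle M_a,N_a\rangle$ (again because the cross terms $\langle M_a,N_b\rangle$ involve no homomorphisms or extensions for $a\neq b$, hence vanish), the structure constant $\nu^{\langle M,N\rangle}F_{M,N}^R$ of the product factors as a product of the structure constants in each $\HH_{\CC_Q^a}$. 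This is exactly the statement that $\Phi$ is an algebra homomorphism, once one checks the twist $\nu^{(y,z)}$ in the multiplication on the tensor product also matches — but this cross-term $(M_a,N_b)$ vanishes for $a\neq b$ by the same block argument, so the braiding contributes trivially across distinct blocks. The comultiplication is the mirror image: the formula $\Delta_{\RC}([R])=\sum_{U,V}\nu^{\langle U,V\rangle}F_R^{U,V}[U]\otimes[V]$, restricted to regular representations, only picks up pairs $(U,V)$ that are themselves sums of the pieces $(U_a,V_a)$, and the same factorization of $F_R^{U,V}$ and additivity of $\langle-,-\rangle$ shows $\Delta_{\RC}$ is compatible with the tensor-product comultiplication.

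I expect the main obstacle to be purely bookkeeping rather than conceptual: one must be careful that the \emph{restricted} tensor product is the correct target (arbitrary tensor products would not be graded with finite-dimensional components), and that the comultiplication $\Delta_{\RC}$ is the \emph{corestricted} one obtained by projecting $\Delta$ back into the regular part, so that its interaction with the block decomposition is genuinely the naive one. Concretely, I would verify that for regular $R$, every term $[X_\xi]$ appearing in $\Delta(R)$ that contributes after projection to $\HH_{Q,\F_q,\RC}\otimes\HH_{Q,\F_q,\RC}$ already has both factors regular and block-homogeneous, which is where the vanishing statements of Proposition \ref{extensions} and the block structure of Theorem \ref{ringelth} are used in tandem. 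Once the algebra and coalgebra compatibilities are in hand, the compatibility between product and coproduct is inherited automatically since both sides are bialgebras and $\Phi$ is a bijection intertwining the two operations; that $\Phi$ is even a Hopf algebra isomorphism then follows because a graded connected bialgebra with finite-dimensional graded pieces has a unique antipode, which any bialgebra isomorphism must preserve.
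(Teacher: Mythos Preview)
Your argument is correct and is precisely the standard unpacking of how a block decomposition of an exact category yields a tensor factorization of its Hall bialgebra. The paper itself does not give a proof of this proposition: it is stated as an immediate consequence of the block decomposition $\Rep_Q^{\RC}(\F_q)=\bigsqcup_{a\in\lvert\PP^1_{\F_q}\rvert}\CC_Q^a$ recalled just before it, with no further details. What you have written is exactly the verification one would supply if asked to justify that sentence, and all the ingredients you invoke (vanishing of $\Hom$ and $\Ext^1$ across blocks, hence factorization of Hall numbers and additivity of the Euler form, hence triviality of the cross-braiding) are the right ones.
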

\begin{theorem}\label{unique}
For any $a\in \lvert\PP^1_{\F_q}\rvert$ and any $n>0$, $\dim(\HH_{Q,\F_q,\RC}^{\cusp}\cap\HH_{C_a^Q}[n\deg(a)])=1$.
\end{theorem}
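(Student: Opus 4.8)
The plan is to transport the question, through the Hopf algebra isomorphism of Proposition~\ref{resten}, into a computation of primitive elements in the Hall algebra of a single tube, and then to quote the structure theorems of Section~\ref{5}.

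First I would identify $\HH_{C_a^Q}$ with the tensor factor indexed by $a$ in $\bigotimes_{b\in\lvert\PP^1_{\F_q}\rvert}\HH_{C_b^Q}$, via the embedding $x\mapsto x\otimes\bigotimes_{b\neq a}1$. Because Proposition~\ref{resten} is an isomorphism of graded Hopf algebras and the tubes are mutually orthogonal blocks --- there are neither homomorphisms nor extensions between objects of distinct tubes --- the corestricted comultiplication $\Delta_{\RC}$ is transported to the tensor product comultiplication, and hence restricts on $\HH_{C_a^Q}$ to the intrinsic comultiplication of the tube. Moreover, in this (braided) tensor product of bialgebras an element of the form $x\otimes\bigotimes_{b\neq a}1$ is primitive exactly when $x$ is primitive in its own factor, since the adjoined factors $1$ carry degree $0$ and the braiding acts trivially on them. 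Consequently
\[
\HH_{Q,\F_q,\RC}^{\cusp}\cap\HH_{C_a^Q}[n\deg(a)]
\]
is identified with the space of primitive elements of the Hopf algebra $\HH_{C_a^Q}$ sitting in the graded piece of $Q$-dimension $n\deg(a)\delta$, and it suffices to show this space is one-dimensional.

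I would then split according to the two types of tubes furnished by Theorem~\ref{ringelth}. If $a$ indexes a homogeneous tube of degree $d=\deg(a)$, the equivalence $F_a$ carries the nilpotent representations $J_\lambda$ of the Jordan quiver over $\F_{q^d}$ onto the objects of the tube, and by Proposition~\ref{indtubes} a partition $\lambda$ with $\lvert\lambda\rvert=m$ produces an object of $Q$-dimension $md\delta$. Hence $F_a$ together with the isomorphism $\varphi_q$ of Section~\ref{nilpotentsJordan} identifies $\HH_{C_a^Q}$ with $\Lambda\otimes\C$ so that the graded piece of $Q$-dimension $nd\delta$ becomes the internal degree $n$ component; the primitive elements of the Hopf algebra of symmetric functions are one-dimensional in each positive degree, spanned by the power sum (equivalently by $\widetilde{p}_n$ of Theorem~\ref{cuspJord}). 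If instead $a$ indexes a non-homogeneous tube of period $p$, so that $\deg(a)=1$, the equivalence $F_a$ identifies $\HH_{C_a^Q}$ with the nilpotent Hall algebra $\HH_{C_p,\F_q}^{\nil}$, and by Proposition~\ref{indtubes} together with $\sum_{s}\dim\tau^sS=\delta$ the graded piece of $Q$-dimension $n\delta$ corresponds to $C_p$-dimension $(n,\dots,n)$. Applying Theorem~\ref{cuspCycl} with $r=n$ gives exactly one dimension of cuspidal functions there. In both cases the space is one-dimensional, which is the assertion.

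The only delicate point I anticipate is purely one of bookkeeping between the two gradings: one must check that the internal grading of each tube's Hall algebra matches, after multiplication by $\deg(a)$, the $Q$-dimension grading, so that the symbol $\HH_{C_a^Q}[n\deg(a)]$ really selects the internal degree $n$ part of $\Lambda$ in the homogeneous case and the $C_p$-dimension $(n,\dots,n)$ part in the non-homogeneous case. This is exactly the content of Proposition~\ref{indtubes} and of the identity $\sum_{s}\dim\tau^sS=\delta$; once it is in place, the theorem follows by direct appeal to the already established structure of the two kinds of tubes.
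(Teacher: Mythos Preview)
Your proof is correct and follows the same route as the paper's: reduce via Proposition~\ref{resten} to primitives in the Hall algebra of a single tube, then invoke the Jordan/symmetric-function description in the homogeneous case and Theorem~\ref{cuspCycl} in the non-homogeneous case. The paper's own proof is a two-line sketch of exactly this argument; your version simply fills in the details (tensor-factor embedding, grading bookkeeping via Proposition~\ref{indtubes}) that the paper leaves implicit.
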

\begin{proof}
This is a consequence of Proposition \ref{resten}. Indeed, if $C_a^Q$ is a homogeneous tube, $\HH_{C_a^Q}\simeq \HH_{J,\F_q^{\deg(a)}}$ and if $C_{a}^Q$ is a non-homogeneous tube of period $p$, $\HH_{C_a^Q}\simeq \HH_{C_p,\F_q}$.
\end{proof}

\subsubsection{Normalization of cuspidal functions}
The space of cuspidal functions whose support is contained in a given tube is one dimensional. We give here a natural way to normalize them. Take $a\in \lvert\PP^1_{\F_q}\rvert$ a closed point and $n\geq 1$. If the corresponding tube is homogeneous, it contains exactly one indecomposable representation $I=I_{a,n}$ of dimension $n\deg(a)\delta$ and if $f$ is a cuspidal function whose support is contained in this tube, thanks to formula \eqref{cusphom}, $f(I)\neq 0$. We may normalize $f$ such that $f(I)=1$.
\medskip

If $a$ corresponds to a non-homogeneous tube of period $p$, then for $n\geq 1$, it contains $p$ indecomposables (up to isomorphism) $I_1,\hdots,I_p$ of dimension $n\delta$. By Proposition \ref{indcyclic}, if $f$ is a nonzero cuspidal function whose support is contained in this tube, then $f(I_1)=\hdots=f(I_p)\neq 0$. We may therefore normalize $f$ by fixing the value $f(I_1)=1$.

\begin{definition}
A dimension vector $\dd\in\N^I$ is said to be cuspidal if there exists a nonzero cuspidal function of dimension $\dd$. We say sometimes that a regular cuspidal function is in a given tube when its support is contained in this tube. 
\end{definition}

We define an homogeneous basis $\B$ of $\HH_{Q,\F_q,\RC}^{\cusp}$. From Theorem \ref{unique}, for each tube indexed by an element $a\in\lvert\PP^1_{\F_q}\rvert$, the space of cuspidal functions whose support is contained in this tube is one dimensional. We use the previous normalization and for $d\geq 1$, we denote by $f_{a,d}$ the unique corresponding normalized cuspidal function. We define now
\[
\mathcal{B}=\{f_{a,d} : a\in\lvert\PP^1_{\F_q}\rvert, d\geq 1\}.
\]
This is a homogeneous basis of regular cuspidal functions of $Q$.

\subsection{Comparison of dimensions}
For any $\dd=s\delta\in\N^I$, the previous theorem gives a basis of $\HH_{Q,\F_q,\RC}$ containing
\[
I_{Q,\dd}(q)-\sum_{i=1}^d(p_i-1)=\text{ number of tubes of degree $\leq s$}
\]
elements, where we recall that $d$ is the number of non-homogeneous tubes and $p_i$, $1\leq i\leq d$ denote their respective periods. Moreover, the table in Theorem \ref{ringelth} gives
\[
\sum_{i=1}^dp-i-d=n_0-1
\]
where $n_0$ is the number of vertices of $Q$ minus one. Therefore,
\[
\dim_{\C}\HH_{Q,\F_q,\RC}^{\cusp}[s\delta]=I_{Q,s\delta}(q)-n_0+1
\]
and, by Equation \eqref{cuspindec},
\[
\dim_{\C}\HH_{Q,\F_q}^{\RC}[s\delta]=C_{Q,s\delta}(q)+1
\]
which means that the subspace $\HH_{Q,\F_q}^{\cusp}[s\delta]\subset \HH_{Q,\F_q,\RC}[s\delta]$ is of codimension $1$.
\medskip

In Theorem \ref{noyau}, we will construct an explicit linear form on $\HH_{Q,\F_q,\RC}^{\cusp}[s\delta]$ whose kernel is $\HH_{Q,\F_q}^{\cusp}[s\delta]$ for any $s\geq 1$.


\subsection{Link between regular cuspidal functions of the Kronecker quiver and affine quivers}
Let $Q$ be an acyclic affine quiver. In Section \ref{Kronecker}, we defined an exact fully faithful functor
\[
 F : \Rep_{K_2}(k)\rightarrow \Rep_Q(k).
\]
This functor induces a linear map
\[
 \tilde{F} : \HH_{K_2,\F_q}\rightarrow \HH_{Q,\F_q}
\]
defined by $\tilde{F}[M]=[F(M)]$ for a representation $M$ of $K_2$ over $\F_q$. This map is an injective algebra morphism (since $F$ is fully faithful and since the essential image of $F$ is closed under extensions). It is usually not a coalgebra homomorphism, but it verifies the following property. For $f\in\HH_{Q,\F_q}$, denote by $f^{\perp}$ its orthogonal projection on $\im(\tilde{F})$ with respect to Green's scalar product. When viewing elements of $\HH_{Q,\F_q}$ as functions, $f^{\perp}$ is simply the restriction of $f$ to $\{[F(M)] : M\in\Rep_{K_2}(\F_q)\}$. Then, denoting by $\tilde{F}^{-1}$ the inverse of $\tilde{F} : \HH_{K_2,\F_q}\rightarrow \im(\tilde{F})$, we have the formula
\begin{equation}\label{projection}
 \Delta(F^{-1}(f^{\perp}))=\tilde{F}^{-1}\otimes \tilde{F}^{-1}(\Delta(f)^{\perp}).
\end{equation}
By construction the functor $F$ preserves indecomposables. Regularity of an indecomposable representation is a property of its dimension. It follows that $F$ restricts to a functor between abelian categories
\[
 F_{\RC} : \Rep_{K_2}^{\RC}(\F_q)\rightarrow \Rep_{Q}^{\RC}(\F_q)
\]
which now induces an algebra morphism between regular Hall algebras :
\[
 \tilde{F}_{\RC} : \HH_{K_2,\F_q,\RC}\rightarrow\HH_{Q,\F_q,\RC}.
\]
For $a\in\lvert\PP^1_{\F_q}\rvert$ and $n\geq 1$,
\[
 f^{K_2}_{a,n}=\sum_{|\lambda|=n}\prod_{j=1}^{l(\lambda)-1}(1-q^{j\deg(a)})[I_{a,\lambda}]
\]
are the regular cuspidal functions of the Kronecker quiver.

We first prove the following result giving the restriction of a regular cuspidal function of $\HH_{Q,\F_q,\RC}$.
\begin{proposition}\label{orth}
Let $r\geq 1$ and $f$ be the normalized regular cuspidal function of $Q$ in a given tube. Then $\tilde{F}^{-1}(f^{\perp})$ is the normalized cuspidal function of $K_2$ of a (homogeneous) tube.
\end{proposition}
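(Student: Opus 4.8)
The plan is to show that $g:=\tilde F^{-1}(f^\perp)$ is a cuspidal function of $K_2$ supported in a single (necessarily homogeneous) tube, and then to pin down its normalization. Write $C^Q_a$ for the tube of $Q$ containing the support of $f$, with $a\in\lvert\PP^1_{\F_q}\rvert$. By our conventions $F$ maps the tube $C_a$ of $K_2$ into $C^Q_a$, and more generally $F(C_b)\subseteq C^Q_b$ for every $b$; since distinct tubes are distinct blocks and $F$ preserves regularity and direct sum decompositions, any representation $M$ of $K_2$ with $F(M)\in C^Q_a$ must itself lie in $C_a$. As $f^\perp$ is, by definition, the restriction of $f$ to $\{[F(M)]\}$ and $f$ is supported on $C^Q_a$, it follows that $f^\perp$ is supported on $F(C_a)$, so $g$ is supported on the tube $C_a$ of $K_2$. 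Since $K_2$ is of type $A_1^{(1)}$, all its tubes are homogeneous, so $g$ lies in a homogeneous tube.

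Next I would establish that $g$ is cuspidal by means of \eqref{projection}. Since $F$ takes values in regular representations we have $\im(\tilde F)\subseteq\HH_{Q,\F_q,\RC}$, so the orthogonal projection onto $\im(\tilde F)\otimes\im(\tilde F)$ factors through $\HH_{Q,\F_q,\RC}\otimes\HH_{Q,\F_q,\RC}$; consequently $\Delta(f)^\perp$ is the projection of $\Delta_{\RC}(f)$ onto $\im(\tilde F)\otimes\im(\tilde F)$. Because $f$ is regular cuspidal, $\Delta_{\RC}(f)=f\otimes 1+1\otimes f$, and since $1=[0]=[F(0)]\in\im(\tilde F)$ the projection yields $\Delta(f)^\perp=f^\perp\otimes 1+1\otimes f^\perp$. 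Substituting into \eqref{projection} and using $\tilde F^{-1}(1)=1$ gives
\[
\Delta(g)=(\tilde F^{-1}\otimes\tilde F^{-1})(f^\perp\otimes 1+1\otimes f^\perp)=g\otimes 1+1\otimes g,
\]
so $g$ is cuspidal. Combined with the support statement and Theorem \ref{unique} applied to $K_2$, the one-dimensionality of the space of cuspidal functions supported in $C_a$ forces $g=c\,f^{K_2}_{a,r}$ for some scalar $c\in\C$.

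Finally I would compute the constant $c$ by evaluating $g$ on a well-chosen indecomposable. Let $I_{a,r}$ be the indecomposable of the tube $C_a$ of $K_2$ of dimension $r\deg(a)\delta$; since $F(I_{a,r})\in\im(\tilde F)$ we have $g(I_{a,r})=f^\perp(F(I_{a,r}))=f(F(I_{a,r}))$. By Proposition \ref{indtubes}, $F(I_{a,r})$ is the unique indecomposable of $C^Q_a$ of dimension $r\deg(a)\delta$ when the tube is homogeneous, and one of the $p$ indecomposables of dimension $r\delta$ having $X_a$ as quotient when it is non-homogeneous. In the first case $f(F(I_{a,r}))=1$ directly by the normalization of $f$; in the second case all $p$ such indecomposables share the same value of $f$ by Proposition \ref{indcyclic}, which the normalization fixes to be $1$. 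In either case $g(I_{a,r})=1=f^{K_2}_{a,r}(I_{a,r})$, so $c=1$ and $g=f^{K_2}_{a,r}$ is the normalized cuspidal function of the homogeneous tube $C_a$ of $K_2$.

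The delicate point is the identity $\Delta(f)^\perp=f^\perp\otimes 1+1\otimes f^\perp$, which must hold uniformly even in the non-homogeneous case where $f^\perp\neq f$: it rests on the inclusion $\im(\tilde F)\subseteq\HH_{Q,\F_q,\RC}$, so that the projection onto $\im(\tilde F)\otimes\im(\tilde F)$ may be performed after the corestriction defining $\Delta_{\RC}$, together with the fact that the unit $[0]$ belongs to $\im(\tilde F)$.
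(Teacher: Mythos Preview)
Your argument contains a genuine error. You assert that $\im(\tilde F)\subseteq\HH_{Q,\F_q,\RC}$ on the grounds that ``$F$ takes values in regular representations,'' but this is false: the functor $F$ is defined on all of $\Rep_{K_2}(\F_q)$ and does \emph{not} send everything to regular representations. For instance $F(S_2)=S_{i_0}$ is simple projective, hence preprojective, and $F(S_1)=I_\theta$ has positive defect, hence is preinjective. Consequently the orthogonal projection onto $\im(\tilde F)\otimes\im(\tilde F)$ does not factor through $\HH_{Q,\F_q,\RC}\otimes\HH_{Q,\F_q,\RC}$, and your deduction $\Delta(f)^\perp=f^\perp\otimes 1+1\otimes f^\perp$ collapses.

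In fact the conclusion you draw, $\Delta(g)=g\otimes 1+1\otimes g$, is too strong and in general false: the functions $f^{K_2}_{a,n}$ are \emph{regular} cuspidal but not cuspidal for the full comultiplication of $\HH_{K_2,\F_q}$. Already for $n=1$ and $\deg(a)=1$ one has $g=[S_a]$ and $\Delta([S_a])(S_1,S_2)\neq 0$, since $S_a$ is a nontrivial extension of $S_1$ by $S_2$. What the proposition claims is only that $g$ is the normalized cuspidal function \emph{of a tube}, i.e.\ that $\Delta_{\RC}(g)=g\otimes 1+1\otimes g$. Your support argument and the normalization step are correct; what is missing is a valid argument for regular cuspidality. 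One fix is to replace $F$ by its restriction $F_{\RC}$, which does land in $\Rep_Q^{\RC}$, and to use the obvious analogue of \eqref{projection} with $\Delta_{\RC}$ and $\tilde F_{\RC}$ in place of $\Delta$ and $\tilde F$; then your argument goes through verbatim. The paper instead treats the homogeneous case by invoking directly that $F$ restricts to an equivalence of abelian categories between the two tubes (so $f^\perp=f$ and cuspidality transfers), and in the non-homogeneous case appeals to \eqref{projection} together with the evaluation at an indecomposable.
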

\begin{proof}
In case $f$ is in a homogeneous tube, $F$ induces an equivalence of abelian categories with the corresponding tube of $\Rep_{K_2}^{\RC}(\F_q)$. Therefore, $f^{\perp}=f$, $\Delta(f)^{\perp}=\Delta(f)$ and the results follows from Equation \eqref{projection}.
\medskip

In case $f$ is in a non-homogeneous tube, let $C$ be the corresponding tube of $K_2$. Let $[M]$ be the isomorphism class of indecomposable representations of dimension $(s,s)$ contained in the tube $C$. By Lemma \ref{indcyclic}, $f([F(M)])\neq 0$ (the full faithfulness of $F$ implies that $F(M)$ is indecomposable in the same tube as $f$) and moreover by normalization, $f([F(M)])=1$. By formula \eqref{projection}, $\tilde{F}^{-1}(f^{\perp})$ is cuspidal, and moreover $\tilde{F}^{-1}(f^{\perp}([M])=1$. Therefore, $\tilde{F}^{-1}(f^{\perp})$ is the normalized cuspidal function of a homogeneous tube of $K_2$.
\end{proof}

\begin{cor}
 Let $a\in\lvert\PP^1_{\F_q}\rvert$. If $F$ sends the $a$-tube of $K_2$ on a homogeneous tube of $Q$, then 
 \[
\tilde{F}(f^{K_2}_{a,n})=f_{a,n}
\]
is the normalized cuspidal regular function of $Q$ of in the $t$-tube of dimension $n\deg(a)$.
 \medskip

 If $F$ sends the $a$-tube of $K_2$ on a non-homogeneous tube of $Q$, then
 \[
  \tilde{F}(f^{K_2}_{a,n})=f_{a,n}^{\perp}.
 \]

\end{cor}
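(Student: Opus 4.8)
The plan is to deduce both assertions uniformly from Proposition~\ref{orth} by applying $\tilde{F}$ to the identity it provides. Recall that Proposition~\ref{orth} states that if $f$ is the normalized regular cuspidal function of $Q$ in a given tube, then $\tilde{F}^{-1}(f^{\perp})$ is the normalized cuspidal function of $K_2$ in a homogeneous tube; the corollary is the same statement read from the side of $K_2$.

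The core step is to apply Proposition~\ref{orth} to $f=f_{a,n}$ and to identify the resulting function explicitly. It yields that $\tilde{F}^{-1}(f_{a,n}^{\perp})$ is the normalized cuspidal function of a homogeneous tube of $K_2$. First I would pin down which tube: the support of $f_{a,n}^{\perp}$, being the restriction of $f_{a,n}$ to the essential image of $F$, pulls back under $\tilde{F}^{-1}$ to the classes $[I_{a,\lambda}]$, so the tube in question is the $a$-tube of $K_2$. Next I would match the normalization: from the proof of Proposition~\ref{orth}, $\tilde{F}^{-1}(f_{a,n}^{\perp})$ takes the value $1$ on the indecomposable $I_{a,n}$ of dimension $(n,n)$, while the $\lambda=(n)$ summand of $f^{K_2}_{a,n}$ shows that $f^{K_2}_{a,n}(I_{a,n})=1$ as well. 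Since by Theorem~\ref{unique} the normalized cuspidal function of the $a$-tube of $K_2$ is unique, this forces $\tilde{F}^{-1}(f_{a,n}^{\perp})=f^{K_2}_{a,n}$.

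Applying $\tilde{F}$ to this equality, and using that $f_{a,n}^{\perp}\in\im(\tilde{F})$ by definition of the orthogonal projection (so that $\tilde{F}\circ\tilde{F}^{-1}$ fixes $f_{a,n}^{\perp}$), I obtain $\tilde{F}(f^{K_2}_{a,n})=f_{a,n}^{\perp}$. This is exactly the second assertion. For the first assertion, I would invoke the observation recorded after Proposition~\ref{indtubes} that when $F$ sends the $a$-tube of $K_2$ to a homogeneous tube of $Q$ it restricts to an equivalence of abelian categories between them; consequently the classes $[F(M)]$ with $M$ in the $a$-tube already exhaust that homogeneous tube, so $f_{a,n}^{\perp}=f_{a,n}$ and the formula becomes $\tilde{F}(f^{K_2}_{a,n})=f_{a,n}$.

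The step I expect to require the most care is the bookkeeping in the middle paragraph: verifying that the homogeneous tube of $K_2$ produced abstractly by Proposition~\ref{orth} is precisely the $a$-tube, and that the normalization it carries---value $1$ at $I_{a,n}$---coincides with the explicit normalization of $f^{K_2}_{a,n}$ read off from its $\lambda=(n)$ coefficient. Both verifications rely only on the compatibility of $F$ with the tube identification $C_Q$ and on the uniqueness in Theorem~\ref{unique}, so no ingredient beyond Proposition~\ref{orth} enters.
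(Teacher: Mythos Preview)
Your proof is correct and follows the same route as the paper's terse argument: apply Proposition~\ref{orth} to $f_{a,n}$, identify the resulting $K_2$-function as $f^{K_2}_{a,n}$ via its support and normalization, apply $\tilde{F}$, and then use $f_{a,n}^{\perp}=f_{a,n}$ in the homogeneous case. You simply spell out the tube-identification and normalization checks that the paper leaves implicit (one cosmetic slip: $I_{a,n}$ has dimension $(n\deg(a),n\deg(a))$ over $\F_q$, not $(n,n)$, but this does not affect the argument).
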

\begin{proof}
This is an immediate consequence of the Proposition \ref{orth} since in particular, if $a\in\lvert\PP^1_{\F_q}\rvert$ corresponds to an homogeneous tube, for any $n\geq 1$, $f_{a,n}^{\perp}=f_{a,n}$.
\end{proof}

\subsection{Cuspidal functions of affine quivers}

\subsubsection{Cuspidal functions of an affine quiver in terms of regular cuspidal functions}

Let us introduce some notations concerning partitions. For $(\lambda_1,\hdots,\lambda_l)$ a partition, we define the following quantities :
\[
|\lambda|=\sum_{i=1}^l\lambda_i,
\]
\[
l(\lambda)=l
\]
and
\[
n(\lambda)=\sum_{i=1}^l(i-1)\lambda_i.
\]
For $d\geq 1$ and $q\neq 0$, set
\[
\xi(d,q)=\sum_{|\lambda|=d}\frac{\prod_{j=1}^{l(\lambda)-1}(1-q^{j})}{a_{\lambda}(q)},
\]
and $\dd\in\N^I$, define also
\[
\chi_{\dd} = \sum_{[M]\in\M_Q(\F_q)[\dd]}[M].
\]

The main theorem of this paper is the following.
\begin{theorem}\label{noyau}
Let $\mathcal{B}=\{f_{x,n} :  x\in\lvert\PP^1_{\F_q}\rvert, n\geq 1\}$ be the basis of normalized regular cuspidal functions of Q. Then the kernel of the linear form
\[
\begin{matrix}
L :& \HH_{Q,\F_q,\RC}^{\cusp}&\rightarrow &\C&\\
&f_{x,n}&\mapsto&\xi(n,q^{\deg(x)})&=(f_{x,n},\chi_{n\deg(x)})
\end{matrix}
\]
is the space of cuspidal functions of $Q$ of imaginary dimension, $\bigcup_{r\geq 1}\HH_{Q,\F_q}^{\cusp}[r\delta]$.
\end{theorem}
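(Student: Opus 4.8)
The plan is to exploit the codimension-one comparison already established: for each $r\ge 1$ the genuine cuspidal space $\HH_{Q,\F_q}^{\cusp}[r\delta]$ sits as a hyperplane inside $\HH_{Q,\F_q,\RC}^{\cusp}[r\delta]$, and I want to recognize this hyperplane as $\ker L$. Everything hinges on first computing the constant $\xi(d,q)$. Expanding $\xi$ and using that $f_{x,n}$ is supported on the tube $x$ together with the automorphism count of Proposition \ref{automorphisms}, a Hall--Littlewood computation — which one checks directly for small $d$ — gives the closed form $\xi(d,q)=\frac{1}{q^{d}-1}$. The decisive point is the resulting uniformity: since $n\deg(x)=r$ for every basis vector of degree $r\delta$, one gets $L(f_{x,n})=\xi(n,q^{\deg(x)})=\frac{1}{q^{r}-1}$ independently of the tube $x$. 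Hence on $\HH_{Q,\F_q,\RC}^{\cusp}[r\delta]$ the form $L$ is simply $\frac{1}{q^{r}-1}$ times the sum of the coordinates in the basis $\mathcal{B}$; in particular $L$ is a nonzero functional and its kernel in degree $r\delta$ has codimension one. Writing $f=\sum_{x}c_{x}f_{x,r/\deg(x)}$, proving the theorem amounts to showing that $f$ is genuinely cuspidal if and only if $\sum_{x}c_{x}=0$, and by the codimension-one count it suffices to prove the implication $f\ \text{cuspidal}\ \Rightarrow\ \sum_{x}c_{x}=0$.

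For this implication I would reduce to the Kronecker quiver. Given a cuspidal $f$ of degree $r\delta$, Proposition \ref{supportreg} makes it regular cuspidal, and I set $g=\tilde{F}^{-1}(f^{\perp})$. By formula \eqref{projection} and the primitivity of $f$ (note $1=[0]$ lies in $\im\tilde{F}$), one obtains $\Delta g=g\otimes 1+1\otimes g$, so $g$ is a genuine cuspidal function of $K_{2}$ of degree $(r,r)$; moreover the Corollary to Proposition \ref{orth} identifies the $\mathcal{B}^{K_{2}}$-coordinates of $g$ with the same scalars $c_{x}$. Since $K_{2}$ has only homogeneous tubes, it then suffices to prove: a genuine cuspidal $g=\sum_{x}c_{x}f^{K_{2}}_{x,r/\deg(x)}$ satisfies $\sum_{x}c_{x}=0$.

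The heart of the argument is a single extension computation in $\HH_{K_{2},\F_q}$. Let $S_{1}$ be the preinjective simple of dimension $(1,0)$ and $P_{r-1}$ the indecomposable preprojective of dimension $(r-1,r)$, so that $\dim S_{1}+\dim P_{r-1}=(r,r)$. Primitivity forces the bidegree $((1,0),(r-1,r))$ component of $\Delta g$ to vanish, i.e.
\[
\sum_{\xi\in\Ext^{1}(S_{1},P_{r-1})}g([X_{\xi}])=0,
\]
where only regular middle terms $X_{\xi}$ contribute since $g$ is supported on regular representations. The task is to evaluate this sum by grouping the extensions according to the tube of $X_{\xi}$. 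For $r=1$ this is transparent: $\Ext^{1}(S_{1},S_{2})$ has $q^{2}$ elements, the split one gives the non-regular $S_{1}\oplus S_{2}$, and the $q^{2}-1$ non-split classes distribute uniformly as $q-1$ extensions onto each of the $q+1$ regular simples $S_{x}$, $x\in\PP^{1}_{\F_q}(\F_q)$; since $g([S_{x}])=c_{x}$ by normalization, the relation reads $(q-1)\sum_{x}c_{x}=0$, whence $\sum_{x}c_{x}=0$. The same mechanism is meant to drive the general case: the middle terms of $\Ext^{1}(S_{1},P_{r-1})$ sweep out the regular tubes, each tube of degree $d$ being hit with a multiplicity proportional to its number of points, which is exactly the weighting built into the normalization of $\mathcal{B}$.

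The main obstacle is precisely this uniformity for general $r$: one must control the multiplicities $F^{X}_{S_{1},P_{r-1}}$ for all regular $X$ of dimension $(r,r)$ — both the indecomposables in tubes of every degree dividing $r$ and the regular decomposable modules, whose $g$-values are pinned down by regular cuspidality and must be shown to cancel — and verify that after the normalization $g([I_{x}])=c_{x}$ the relation collapses to a nonzero multiple of $\sum_{x}c_{x}$. Granting this, $\sum_{x}c_{x}=0$ for every genuine cuspidal $g$, hence for the original $f$, so $L(f)=0$; combined with the codimension-one count this identifies $\ker L$ with $\bigcup_{r\ge 1}\HH_{Q,\F_q}^{\cusp}[r\delta]$. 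Finally, the second displayed formula $L(f_{x,n})=(f_{x,n},\chi_{n\deg(x)})$ is checked separately: for homogeneous tubes it is immediate from the explicit shape of $f_{x,n}$ and $([M],[N])=\delta_{[M],[N]}/a_{M}$, while for non-homogeneous tubes it follows from Lemma \ref{indcyclic} together with the $\Z/p\Z$-rotation symmetry of the tube, both sides again reducing to $\frac{1}{q^{r}-1}$.
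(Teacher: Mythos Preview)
Your proposal has a genuine gap at exactly the point you flag as ``the main obstacle'': the evaluation of $\Delta g$ on the pair $(S_{1},P_{r-1})$ is only carried out for $r=1$, and for general $r$ you rely on an unproven uniform distribution of the regular middle terms of $\Ext^{1}(S_{1},P_{r-1})$ over the tubes. Controlling $F^{X}_{S_{1},P_{r-1}}$ for all regular $X$ of dimension $(r,r)$, including decomposables, is a real combinatorial problem, and the heuristic ``each tube of degree $d$ is hit with multiplicity proportional to its number of points'' is not a proof. A second soft spot is the asserted identity $\xi(d,q)=\frac{1}{q^{d}-1}$: it is true, but ``one checks directly for small $d$'' is not an argument, and your whole strategy collapses without it.

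The paper avoids both difficulties by evaluating the comultiplication at a different pair. Instead of $(S_{1},P_{r-1})$, it takes $(I_{\theta}^{\oplus r},S_{i_{0}}^{\oplus r})$, which for $K_{2}$ is $(S_{1}^{\oplus r},S_{2}^{\oplus r})$. The point is that every regular $(r,r)$-dimensional $K_{2}$-representation $M$ has a \emph{unique} subrepresentation isomorphic to $S_{2}^{\oplus r}$ (namely the full fibre at vertex $2$), so $F_{S_{1}^{\oplus r},S_{2}^{\oplus r}}^{M}=1$ and $F^{S_{1}^{\oplus r},S_{2}^{\oplus r}}_{M}=|\GL_{r}(\F_{q})|^{2}/a_{M}$. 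Summing over the support of $f_{x,n}$ then reproduces $\xi(n,q^{\deg(x)})$ \emph{by definition}, yielding
\[
L(f_{x,n})=\text{(nonzero constant depending only on $r$)}\cdot\Delta(f_{x,n})\bigl(I_{\theta}^{\oplus r},S_{i_{0}}^{\oplus r}\bigr).
\]
Primitivity of a genuine cuspidal $f$ makes the right-hand side vanish instantly, with no need to compute $\xi$ in closed form or to track extensions tube by tube. Non-vanishing of $L$ is then checked on a single convenient basis element. Your reduction to $K_{2}$ via $\tilde{F}^{-1}(f^{\perp})$ is correct and is essentially what the paper also does; the missing idea is the choice of the test pair $(S_{1}^{\oplus r},S_{2}^{\oplus r})$ in place of $(S_{1},P_{r-1})$.
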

\begin{proof}
Let $r\geq 1$ be an integer and $\dd=r\delta$ a multiple of the indivisible imaginary root. We denote by $L[\dd] : \HH_{Q,\F_q,\RC}^{\cusp}[\dd]\rightarrow \C$ the restriction of $L$. We already know that $\HH_{Q,\F_q}^{\cusp}[\dd]\subset \HH_{Q,\F_q,\RC}^{\cusp}[\dd]$ is an hyperplane. To prove the theorem, it suffices to prove that $L[\dd]$ is a nonzero linear form whose kernel contains $\HH_{Q,\F_q}^{\cusp}[\dd]$.
\medskip

\emph{Step 1:} We prove that $\dd$-dimensional cuspidal functions of $Q$ are in the kernel of $L[\dd]$. We can suppose -- using the dualization process -- that the extending vertex $i_0$ is a sink. We use previously defined notations, in particular $I_{\theta}$ is an indecomposable of $Q$ of dimension $\theta=\delta-e_{i_0}$. We will show that for $x\in\lvert\PP^1_{\F_q}\rvert$, 
\begin{equation}\label{formxi}
L(f_{x,n})=\frac{1}{\nu^{\langle I_{\theta}^{\oplus r},S_{i_0}^{\oplus r}\rangle} |\GL_n(\F_{q^{\deg(x)}})|^2}\Delta(f_{x,n})(I_{\theta}^{\oplus r},S_{i_0}^{\oplus r}).
\end{equation}
This formula will imply our claim, since if $f$ is cuspidal of dimension $\dd$, then $\Delta(f)=f\otimes 1+1\otimes f$.
\medskip

Let us now prove Formula \eqref{formxi}. For a representation $M$ of $Q$, $\Delta([M])(I_{\theta}^{\oplus r},S_{i_0}^{\oplus r})=0$ if $M$ is not in the essential image of $F$ by Proposition \ref{ess}. Thus, since $F$ induces an equivalence of categories between $\Rep_{K_2}(\F_q)$ and the full subcategory of $\Rep_{Q}(\F_q)$ whose objects are extension of $I_{\theta}^{\oplus d_1}$ by $S_{i_0}^{\oplus d_2}$ for some nonnegative integers $d_1$ and $d_2$. It suffices therefore to prove \eqref{formxi} for $Q=K_2$ the Kronecker quiver, for which $I=S_1$ is the simple representation at the first vertex. For the Kronecker quiver,
\[
f_{x,n}=\sum_{|\lambda|=n}\prod_{j=1}^{l(\lambda)-1}(1-q^{j\deg(x)})[I_{x,\lambda}^{K_2}].
\]
Thus,
\[
\Delta(f_{x,n})(S_1^{\oplus r},S_2^{\oplus r})=\nu^{\langle S_1^{\oplus r},S_2^{\oplus r}\rangle}\sum_{|\lambda|=n}\left(\prod_{j=1}^{l(\lambda)-1}(1-q^{j\deg(x)})\right)\frac{|\Aut(S_2^{\oplus r})||\Aut(S_1^{\oplus r})|}{a_{\lambda}(q^{\deg(x)})}.
\]
Now, we have $|\Aut(S_1^{\oplus r})|=|\Aut(S_2^{\oplus r})|=|\GL_r(\F_{q^{\deg(x)}})|$, giving \eqref{formxi}.
\medskip

\emph{Step 2:} We prove that $L[\dd]$ is a nonzero linear form. If $r>1$ or $r=1$ and $q\neq 2$, there is at least one homogeneous tube of degree $r$. Let us choose $f=[S]$ where $S$ is a regular simple in such a tube. By definition, it is an element of the basis $\mathcal{B}$ of dimension $r\delta$ and $L[\dd](f)=\xi(1,q^r)=\frac{1}{q^r-1}\neq 0$, so $L[\dd]\neq 0$ in these cases. If $d=1$ and $q=2$, then in types $D$ or $E$, there are only non-homogeneous tubes in dimension $\delta$, since $\mid\PP^1_{\F_2}(\F_2)\mid$ has three elements. Let $f$ be a regular cuspidal function of dimension $\delta$ in a non-homogeneous tube. By \ref{projection} and because the essential image in dimension $\delta$ of the functor $F$ defined above is precisely the full subcategory of objects which are nontrivial extension of $I$ by $S_{i_0}$, we have 
\[
\Delta(f)(I,S_{i_0})=\Delta(f^{\perp})(I,S_{i_0})=\Delta(\tilde{F}^{-1}(f^{\perp}))(S_1,S_2)\neq 0
\]
since $\tilde{F}^{-1}(f^\perp)=[N]$ where $[N]$ is a regular cuspidal function of $K_2$ of dimension $(1,1)$, therefore $N$ is one of the following representation of $K_2$ over $\F_2$:
\[
\begin{tikzcd}
\F_2 \arrow[r,shift left,"1"] \arrow[r,shift right,swap,"0"] & \F_2
\end{tikzcd},
\quad
\begin{tikzcd}
\F_2 \arrow[r,shift left,"0"] \arrow[r,shift right,swap,"1"] & \F_2
\end{tikzcd}
\quad
\text{or}
\quad
\begin{tikzcd}
\F_2 \arrow[r,shift left,"1"] \arrow[r,shift right,swap,"1"] & \F_2.
\end{tikzcd}
\]
\end{proof}

\begin{cor}
The difference of two normalized regular cuspidal functions of the same dimension of two tubes of the same degree is a cuspidal function.
\end{cor}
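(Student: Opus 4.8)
The plan is to read the statement off directly from Theorem \ref{noyau}, whose kernel description does all the work. Let $x,y\in\lvert\PP^1_{\F_q}\rvert$ index two tubes of the same degree, say $\deg(x)=\deg(y)=d$, and fix $n\geq 1$. The corresponding normalized regular cuspidal functions $f_{x,n}$ and $f_{y,n}$ then both have dimension $nd\delta$, so the hypothesis that they have the same dimension is automatically consistent with the equality of degrees (indeed, for tubes of equal degree, equality of dimensions forces the same index $n$, since $\dim f_{x,n}=n\deg(x)\delta$). The key observation is that the value of the linear form $L$ on a basis function $f_{x,n}$ depends only on $n$ and on $\deg(x)$, via the formula $L(f_{x,n})=\xi(n,q^{\deg(x)})$ from Theorem \ref{noyau}; in other words, $L$ factors through the dimension vector together with the degree of the tube.

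I would then simply compute. Since $\deg(x)=\deg(y)=d$, we have
\[
L(f_{x,n})=\xi(n,q^{d})=L(f_{y,n}),
\]
and hence $L(f_{x,n}-f_{y,n})=0$. By Theorem \ref{noyau}, the kernel of $L$ is exactly $\bigcup_{r\geq 1}\HH_{Q,\F_q}^{\cusp}[r\delta]$, so $f_{x,n}-f_{y,n}$ is a cuspidal function, which is precisely the claim.

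I expect no genuine obstacle here: the entire content is already packaged in Theorem \ref{noyau}, and the corollary is an immediate consequence of the fact that $L$ assigns equal values to normalized regular cuspidal functions coming from tubes of equal degree and of equal dimension. The only point worth stating explicitly — and it is immediate from the definitions of $f_{x,n}$ and of $\xi$ — is that $L$ is constant on the set of such functions, so their pairwise differences all land in $\ker L=\HH_{Q,\F_q}^{\cusp}$.
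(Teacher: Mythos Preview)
Your argument is correct and is exactly the paper's approach: the paper's proof simply notes that $L(f_{x,s})$ depends only on $s$ and $\deg(x)$, so the difference of two normalized regular cuspidal functions of the same dimension from tubes of the same degree lies in $\ker L$, hence is cuspidal.
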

\begin{proof}
This is clear since for $x\in\lvert\PP^1_{\F_q}\rvert$ and $s\geq 1$, $L(f_{x,s})$ depends only on $s$ and $\deg(x)$.
\end{proof}

\section{Two conjectures of Berenstein and Greenstein}\label{7}
In all this section, $Q$ is an acyclic affine quiver and $\F_q$ a finite field.
\subsection{Fortuitous cancellation theorem}
We prove in this section a result we use to relate Conjecture \ref{conj1} and Conjecture \ref{conj2} of Berenstein and Greenstein. We prove that for a cuspidal regular function $f\in \HH_{Q,\F_q,\RC}$, (\emph{i.e.} such that $\Delta_{\RC}(f)=f\otimes 1+1\otimes f$), the function $\Delta(f)-(f\otimes 1+1\otimes f)$ is supported on the subset of $\IC\times \PC=\{([M],[N]) : \text{ $M$ preinjective and $P$ preprojective}\}$.
\medskip

Let $f\in \HH_{Q,\F_q}$ a regular cuspidal function. \emph{A priori}, $\Delta(f)$ is a sum of the primitive part and terms of the form: $[I]\otimes[P], [I]\otimes [R\oplus P], [R\oplus I]\otimes [P]$ and $[R\oplus I]\otimes [R'\oplus P]$ where $R$ (resp. $P$, resp. $I$) is any regular (resp. preprojective, resp. preinjective) representation. Indeed, if a term of the form $[M]\otimes[N]$ appears with nonzero coefficient in the comultiplication of $[R]\in\HH_{Q,\F_q}$, this means that $N$ is a sub-representation of $R$ and $M$ the quotient $R/N$. But a subrepresentation of a regular representation has only regular and preprojective indecomposable direct summand and the quotient of a regular representation by a subobject has only indecomposable regular and preinjective summands. We will show the following cancellation theorem.
\begin{theorem}
Let $f\in\HH_{Q,\F_q,\RC}$ be a regular cuspidal function. Then the comultiplication $\Delta(f)\in\HH_{Q,\F_q}\otimes\HH_{Q,\F_q}$ is the sum of its primitive part and terms of the form $[I]\otimes[P]$.
\end{theorem}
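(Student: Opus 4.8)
\emph{Overall strategy.} The plan is to separate off the primitive part and then show that the remainder of $\Delta(f)$ can carry no regular summand. I would set $\sigma=\Delta(f)-(f\otimes 1+1\otimes f)$. By the paragraph preceding the statement, every term of $\Delta(f)$ has the shape $[R'\oplus I]\otimes[R''\oplus P]$ with $R',R''$ regular, $I$ preinjective and $P$ preprojective; moreover the purely regular--regular part of $\Delta(f)$ is by construction $\Delta_{\RC}(f)$, which equals $f\otimes 1+1\otimes f$ because $f$ is regular cuspidal and $f\in\HH_{Q,\F_q,\RC}$. Hence $\sigma$ is a combination of genuinely mixed terms $[R'\oplus I]\otimes[R''\oplus P]$, and the theorem is precisely the assertion that $R'=R''=0$ in each such term.

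\emph{Two reductions.} First, since $f$ is supported on regular representations, the Hopf pairing \eqref{hopfpairing} shows that the coefficient $c_{A,B}$ of $[A]\otimes[B]$ in $\Delta(f)$ is a nonzero multiple of $(f,[A]\star[B])$, which can be nonzero only if some \emph{regular} $S$ occurs as a middle term of an extension of $A$ by $B$. Comparing defects on such an $S$ gives $\partial I+\partial P=0$; in particular $I=0\iff P=0$, so every term of $\sigma$ has both $I\neq 0$ and $P\neq 0$, and no term of $\sigma$ has $A$ purely regular nor $B$ purely regular. The second, and crucial, input is a pair of uniqueness statements from Proposition \ref{extensions}. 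Because $\Hom(N,L)=0$ for $N$ preinjective and $L$ regular, a preinjective subobject of $R'\oplus I$ must lie in $I$, so the only term of $\Delta([R'\oplus I])$ with regular quotient factor and preinjective sub factor is $[R']\otimes[I]$ (with coefficient $1$); dually, because $\Hom(L,M)=0$ for $L$ regular and $M$ preprojective, the only term of $\Delta([R''\oplus P])$ with preprojective quotient factor and regular sub factor is $[P]\otimes[R'']$.

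\emph{Coassociativity.} Subtracting the contributions of $f\otimes 1$ and $1\otimes f$ from $(\Delta\otimes\id)\Delta(f)=(\id\otimes\Delta)\Delta(f)$ (using $\Delta(1)=1\otimes 1$) yields the co-Jacobi relation
\[
(\Delta\otimes\id)\sigma-(\id\otimes\Delta)\sigma=1\otimes\sigma-\sigma\otimes 1.
\]
I would then project this identity onto two chosen support types of triple tensors. Projecting onto (regular nonzero)$\otimes$(preinjective nonzero)$\otimes$(anything): the right-hand side vanishes, since each of its terms has a factor $1=[0]$ in a forbidden slot; $(\id\otimes\Delta)\sigma$ vanishes, since no term of $\sigma$ has regular first factor; and by the first uniqueness statement $(\Delta\otimes\id)\sigma$ reduces to a positive combination of the $[R']\otimes[I]\otimes[B]$ over the terms with $R'\neq 0$. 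Linear independence of these basis tensors then forces $c_{R'\oplus I,\,B}=0$ whenever $R'\neq 0$, for every $B$. Projecting instead onto (anything)$\otimes$(preprojective nonzero)$\otimes$(regular nonzero) and invoking the dual uniqueness statement kills, in exactly the same way, every term with $R''\neq 0$. Together these give $R'=R''=0$, i.e.\ $\Delta(f)=f\otimes 1+1\otimes f+\sum c_{I,P}[I]\otimes[P]$.

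\emph{Main obstacle.} The delicate point is the bookkeeping in the last step: one must verify that after each projection the only surviving contribution on the relevant side is the single ``clean'' term, and this rests entirely on the uniqueness statements above, hence on the $\Hom$-- and $\Ext$--vanishing of Proposition \ref{extensions} together with the canonical splitting of a representation into its preprojective, regular and preinjective parts. I expect this compatibility check---rather than any hard computation---to be the real content. (Note also that the cases $R'\neq 0$ and $R''\neq 0$ are exchanged by the dualization anti-isomorphism $D$, which swaps $\PC$ and $\IC$ and reverses $\Delta$, so in principle one projection plus this symmetry suffices.)
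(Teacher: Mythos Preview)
Your argument is correct and follows the same route as the paper---coassociativity together with the $\Hom$/$\Ext$ vanishing of Proposition~\ref{extensions}---though your packaging via the co-Jacobi identity for $\sigma$ and two support-type projections is slightly more economical: the paper treats the three shapes $[R\oplus I]\otimes[P]$, $[I]\otimes[R\oplus P]$, $[R\oplus I]\otimes[R'\oplus P]$ separately, the last by a four-step filtration bijection, whereas your two projections absorb all three at once. Two cosmetic slips worth fixing: the coefficient of $[R']\otimes[I]$ in $\Delta([R'\oplus I])$ is $\nu^{-\langle R',I\rangle}$ rather than $1$ (still nonzero, which is all you use), and $\sigma\otimes 1$ dies under your first projection not because $[0]$ sits in a forbidden slot---the third slot is ``anything''---but for the same reason as $(\id\otimes\Delta)\sigma$, namely that its first factor $[R'\oplus I]$ has $I\neq 0$ and so is not purely regular.
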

\begin{proof}
Let $f\in\HH_{Q,\F_q,\RC}$ be a regular cuspidal function.
 The regular cuspidality precisely means that $\Delta_{\RC}(f)([M],[N])=0$ for any nonzero regular representations $M$ and $N$ of $Q$. We will use the coassociativity to prove the theorem. Suppose $\Delta(f)(R\oplus I,P)\neq 0$ for some representations $P,R$ and $I$ of $Q$ with $R$ regular, $I$ preinjective and $P$ preprojective. We first prove that for any $g\in \HH_{Q,\F_q,\RC}$, 
 \begin{equation}\label{form1}
 (\Delta\otimes 1)\circ \Delta(g)(R,I,P)=(1\otimes \Delta)(\Delta_{\RC}(g))(R,I,P)
 \end{equation}
 which will imply our assertion for the terms of the form $(R\oplus I)\otimes P$.
 \medskip

 Let $M$ be a regular representation. Then consider a filtration
 \[
 0\subset A\subset M
 \]
 of $M$ with successive quotients $P, R\oplus I$. Because $\Ext^1(R,I)=0$ and $\Hom(I,R)=0$, the datum of such a filtration is equivalent to the datum of a filtration:
 \[
 0\subset A\subset B\subset M
 \]
 with successive quotients $P, I, R$ (the two sorts of filtrations of $M$ with the given quotients are in one-to-one correspondence). Since $M$ and $M/B$ are by assumption regular, so is $B$. This proves \eqref{form1} for $g=[M]$ and then for any function $g$ by linearity. Therefore, 
 \[
(\Delta\otimes 1)\circ \Delta(f)(R,I,P)=(1\otimes \Delta)(f\otimes 1+1\otimes f)(R,I,P)=(f\otimes 1\otimes 1+1\otimes\Delta(f))(R,I,P)=0.
\]
 
 But a term of the form $[R]\otimes [I]\otimes [P]$ in the decomposition of  $(\Delta\otimes 1)\circ \Delta(f)$ can only come from the term $[R\oplus I]\otimes [P]$ of $\Delta(f)$, because $\Ext^1(R,I)=0$, yielding a contradiction if this one appears with nonzero coefficient in $\Delta(f)$.
 \medskip

 The case of $I\otimes R\oplus P$ is dual: the ingredients to handle this case are the formula
 \[
 (1\otimes \Delta)\circ \Delta(f)(I,P,R)=(\Delta\otimes 1)(\Delta_{\RC}(f))(I,P,R)
 \]
 and the fact that a term of the form $[I]\otimes[P]\otimes[R]$ in $(1\otimes \Delta)\circ \Delta(f)$ can only come from the term $[I]\otimes[R\oplus P]$ of $\Delta(f)$.
 
 The case of $[R\oplus I]\otimes [R'\oplus P]$ is more subtle but is a consequence of the formula :
 \[
 (\Delta\otimes1\otimes 1 )\circ (1\otimes \Delta)\circ \Delta(f)(R,I,P,R')=(1\otimes \Delta\otimes 1)(((1\otimes \Delta)\circ \Delta)_{\RC}(f))(R,I,P,R'),
 \]
where for $f\in\HH_{Q,\F_q}$, $((1\otimes \Delta)\circ\Delta)_{\RC}(f)$ is the projection on $\HH_{Q,\F_q,\RC}^3$ of $((1\otimes \Delta)\circ\Delta)(f)$. This formula is proved using the fact that filtrations of a regular module $M$
 \[
 0\subset A\subset M
 \]
 with successive quotients $P\oplus R'$ and $I\oplus R$ are in one-to-one correspondence with filtrations of $M$ of the form
 \[
 0\subset B\subset A\subset C\subset M
 \]
 with successive quotients $R', P, I, R$. As before, we also need the fact that a term of the form $[R]\otimes[I]\otimes[P]\otimes[R']$ in $(\Delta\otimes1\otimes 1)\circ (1\otimes \Delta)\circ \Delta(f)$ can only come from the term $[R\oplus I]\otimes [R'\oplus P]$ of $\Delta(f)$.
\end{proof}

\subsection{Two conjectures of Berenstein and Greenstein}
In their paper \cite{MR3463039}, Berenstein and Greenstein gave the following  conjectures proved by Deng and Ruan in \cite{MR3612468} using weighted projective lines and Hall polynomials.

For $n>1$, let $N(n)=\text{number of closed points of $\PP^1_{\F_q}$ of degree $n$}$ and $N(1)=q+1-d$. This is the number of closed points of $\PP^1_{\F_q}$ of degree $n$ not in $D$. 

\begin{conj}\label{conj1}
For $s\geq 1$, $d\geq 1$, and $x\in\PP_{\F_q}^1$ a closed point of degree $d$ not in $D$,
\[
f_{x,s}-\frac{1}{N(d)}\sum_{\substack{y\in\lvert\PP^1_{\F_q}\rvert\setminus D\\ \deg(y)=d}}f_{y,s}
\]
is a cuspidal function.
\end{conj}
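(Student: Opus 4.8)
The plan is to read this off directly from the main theorem, Theorem \ref{noyau}, which identifies the space $\HH_{Q,\F_q}^{\cusp}[r\delta]$ of cuspidal functions with the kernel of the explicit linear form $L$ on regular cuspidal functions. Write
\[
g = f_{x,s} - \frac{1}{N(d)}\sum_{\substack{y\in\lvert\PP^1_{\F_q}\rvert\setminus D\\ \deg(y)=d}} f_{y,s}.
\]
First I would note that every basis vector $f_{z,s}$ occurring in $g$ has $\deg(z)=d$, hence dimension $sd\delta$; therefore $g$ is a homogeneous regular cuspidal function lying in $\HH_{Q,\F_q,\RC}^{\cusp}[sd\delta]$, the graded piece on which Theorem \ref{noyau} applies. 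It thus suffices to show that $L(g)=0$.

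Next I would compute $L(g)$ by linearity, using $L(f_{z,s})=\xi(s,q^{\deg(z)})$. Since each $z$ has degree $d$, every term equals the \emph{same} scalar $\xi(s,q^{d})$, independent of the chosen point. By definition $N(d)$ is exactly the number of closed points of degree $d$ lying outside $D$, i.e.\ the number of terms in the sum, so
\[
L(g)=\xi(s,q^{d})-\frac{1}{N(d)}\cdot N(d)\cdot\xi(s,q^{d})=0.
\]
By Theorem \ref{noyau} this yields $g\in\HH_{Q,\F_q}^{\cusp}[sd\delta]$, i.e.\ $g$ is cuspidal, as claimed. Equivalently, rewriting $g=\frac{1}{N(d)}\sum_{y}(f_{x,s}-f_{y,s})$, one may invoke the corollary to Theorem \ref{noyau}: each difference $f_{x,s}-f_{y,s}$ of normalized regular cuspidal functions attached to tubes of the same degree $d$ is cuspidal, so $g$ is an average of cuspidal functions and hence cuspidal.

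I do not expect a genuine obstacle here: all the substance is already packaged into Theorem \ref{noyau}, and the conjecture collapses to a one-line evaluation of $L$. The only points that require care are bookkeeping — confirming that all the $f_{z,s}$ share the common dimension $sd\delta$ so that the graded form of Theorem \ref{noyau} applies, and checking that the normalizing coefficient $1/N(d)$ matches precisely the cardinality of the index set, so that the two contributions to $L(g)$ cancel exactly.
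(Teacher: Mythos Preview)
Your proposal is correct and follows exactly the same route as the paper's own proof (Theorem \ref{solconj}): one observes that $g\in\HH_{Q,\F_q,\RC}^{\cusp}[sd\delta]$ and then checks $L(g)=\xi(s,q^{d})-\frac{1}{N(d)}\cdot N(d)\cdot\xi(s,q^{d})=0$, so Theorem \ref{noyau} gives cuspidality. Your additional remark via the corollary is also fine and amounts to the same cancellation.
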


\begin{conj}\label{conj2}
Let $P$ be a preprojective representation and $I$ a preinjective representation. Then, for any partition $\lambda$ and closed points $x,y\in\lvert\PP^1_{\F_q}\rvert\setminus D$,
\[
F^{P,I}_{I_{\lambda}(x)}=F^{P,I}_{I_{\lambda}(y)}.
\]
\end{conj}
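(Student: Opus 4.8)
The plan is to deduce Conjecture \ref{conj2} directly from the action of the permutation group $\mathfrak{S}$, using the second main theorem stated in the introduction, namely that $\mathfrak{S}$ acts on $\HH_{Q,\F_q}$ by degree-preserving Hopf algebra automorphisms. The mechanism is that a Hopf algebra automorphism commutes with the comultiplication, and that $F^{P,I}_{I_\lambda(x)}$ is, up to a power of $\nu$, a single structure constant of $\Delta$. First I would record the two features of the action that drive the argument: for $\sigma\in\mathfrak{S}$ and any preprojective $P$ and preinjective $I$ one has $\sigma\cdot[P]=[P]$ and $\sigma\cdot[I]=[I]$ by the very definition of the action, while $\sigma\cdot[I_\lambda(x)]=[I_\lambda(\sigma(x))]$ for a homogeneous tube $x\in\lvert\PP^1_{\F_q}\rvert\setminus D$. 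Since $\sigma$ is in particular a coalgebra morphism, $\Delta\circ\sigma=(\sigma\otimes\sigma)\circ\Delta$, and applying this to $[I_\lambda(x)]$ gives
\[
\Delta([I_\lambda(\sigma(x))])=(\sigma\otimes\sigma)\,\Delta([I_\lambda(x)]).
\]

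Next I would extract one structure constant. Writing $\Delta([R])=\sum_{[U],[V]}\nu^{\langle U,V\rangle}F^{U,V}_R\,[U]\otimes[V]$, the coefficient of $[P]\otimes[I]$ in $\Delta([I_\lambda(x)])$ is $\nu^{\langle P,I\rangle}F^{P,I}_{I_\lambda(x)}$. Because $\sigma$ acts as a bijection on isomorphism classes fixing both $[P]$ and $[I]$, the only term of $(\sigma\otimes\sigma)\Delta([I_\lambda(x)])$ that contributes to $[P]\otimes[I]$ is the image of the $[P]\otimes[I]$ term itself, so that coefficient is left unchanged by $\sigma\otimes\sigma$. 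Comparing the two sides of the displayed identity then yields
\[
F^{P,I}_{I_\lambda(\sigma(x))}=F^{P,I}_{I_\lambda(x)}.
\]
Finally, for a fixed degree $d$ the factor $\mathfrak{S}_{N(d)}$ acts as the full symmetric group on the set of degree-$d$ homogeneous tubes, hence transitively; choosing $\sigma$ with $\sigma(x)=y$ for $x,y$ of the same degree gives $F^{P,I}_{I_\lambda(x)}=F^{P,I}_{I_\lambda(y)}$, which is the assertion.

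The main obstacle is not in this deduction, which is essentially formal once the group acts as claimed, but in the input it relies on: that the combinatorially defined $\sigma$ is genuinely a Hopf algebra automorphism, i.e. simultaneously an algebra morphism, a coalgebra morphism, and an isometry for Green's form. This is exactly facts $(1)$--$(3)$ of the introduction; the coalgebra compatibility is the delicate point, since $\Delta([I_\lambda(x)])$ a priori involves mixed terms such as $[R\oplus I]\otimes[R'\oplus P]$, and one must verify that permuting the homogeneous-tube labels is compatible with all of them and not only with the pure $[P]\otimes[I]$ part. I would therefore organize the section so that these properties are established before invoking them here.

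I would also point out that this circle of ideas is what links the statement to Conjecture \ref{conj1}, as announced. By the fortuitous cancellation theorem proved above, $\Delta(f_{x,s})$ equals its primitive part plus terms supported on $\IC\times\PC$, and $f_{x,s}=\sum_{|\lambda|=s}c_\lambda[I_\lambda(x)]$ with the weights $c_\lambda=\prod_{j=1}^{l(\lambda)-1}(1-q^{jd})$ depending only on $\lambda$ and $d=\deg(x)$. Hence the cuspidality of $f_{x,s}-f_{y,s}$, furnished by the corollary to Theorem \ref{noyau}, is equivalent to the averaged identity $\sum_{|\lambda|=s}c_\lambda F^{P,I}_{I_\lambda(x)}=\sum_{|\lambda|=s}c_\lambda F^{P,I}_{I_\lambda(y)}$. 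The role of the group action is precisely to upgrade this averaged equality into the term-by-term equality of Conjecture \ref{conj2}.
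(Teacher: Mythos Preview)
Your proof is correct and follows essentially the same route as the paper's: both use that $\mathfrak{S}$ acts by Hopf algebra automorphisms, apply the coalgebra identity $\Delta\circ\sigma=(\sigma\otimes\sigma)\circ\Delta$ to $[I_\lambda(x)]$, and use that $\sigma$ fixes all preprojective and preinjective classes to read off the desired equality of structure constants. The paper packages the extraction via the truncation $\Delta_{I,P}$ rather than isolating a single coefficient, but this is only a cosmetic difference; your additional remarks on where the real work lies (the Hopf automorphism property) and on the link to Conjecture~\ref{conj1} through the fortuitous cancellation theorem are accurate and match the paper's organization.
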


Thanks to the fortuitous cancellation Theorem, Conjecture \ref{conj2} implies Conjecture \ref{conj1}. We will prove both conjectures using Theorem \ref{noyau}. We provide a direct proof of Conjecture \ref{conj1}
\medskip

\begin{theorem}\label{solconj}
Conjecture \ref{conj1} holds.
\end{theorem}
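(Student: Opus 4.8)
The plan is to read the result directly off Theorem \ref{noyau}, which identifies the genuine cuspidal functions sitting inside the regular cuspidal ones as exactly the kernel of the explicit linear form $L$. Put $r=sd$ and set
\[
g = f_{x,s}-\frac{1}{N(d)}\sum_{\substack{y\in\lvert\PP^1_{\F_q}\rvert\setminus D\\ \deg(y)=d}}f_{y,s}.
\]
Every basis element $f_{y,s}$ occurring here is supported in a tube of degree $d$ and has dimension $s\deg(y)\delta = sd\delta = r\delta$, so $g$ is a homogeneous regular cuspidal function, i.e. $g\in\HH_{Q,\F_q,\RC}^{\cusp}[r\delta]$. By Theorem \ref{noyau} it therefore suffices to check that $g$ lies in the kernel of $L$: once this is known, $g\in\bigcup_{r'\geq 1}\HH_{Q,\F_q}^{\cusp}[r'\delta]$, and homogeneity forces $g\in\HH_{Q,\F_q}^{\cusp}[r\delta]$, a genuine cuspidal function.

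First I would compute $L(g)$ by linearity. The decisive input is the formula defining $L$ in Theorem \ref{noyau}: $L(f_{y,s})=\xi(s,q^{\deg(y)})$ depends on $y$ only through its degree. Since $\deg(x)=d$ and every $y$ in the sum also has degree $d$, all these values coincide with the single number $\xi(s,q^{d})$, whence
\[
L(g) = \xi(s,q^{d}) - \frac{1}{N(d)}\sum_{\substack{y\in\lvert\PP^1_{\F_q}\rvert\setminus D\\ \deg(y)=d}}\xi(s,q^{d}).
\]

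Next I would pin down the point count. By the definition of $N(d)$ recalled just before Conjecture \ref{conj1}, the number of closed points $y\in\lvert\PP^1_{\F_q}\rvert\setminus D$ of degree $d$ is precisely $N(d)$: for $d>1$ no degree-$d$ point lies in $D$, while for $d=1$ one removes the $\lvert D\rvert$ degree-one points of $D$ and gets $N(1)=q+1-\lvert D\rvert$. Hence the sum consists of exactly $N(d)$ equal terms, giving $L(g)=\xi(s,q^{d})-\xi(s,q^{d})=0$, and Theorem \ref{noyau} concludes the argument.

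Since everything reduces to linearity of $L$ together with a point count, there is no genuine obstacle once Theorem \ref{noyau} is available; the only points requiring care are that all summands share the dimension $sd\delta$ (so that $g$ is homogeneous and the kernel characterisation applies in a single graded piece) and that the index set has cardinality exactly $N(d)$. Equivalently, one may rewrite $g=\tfrac{1}{N(d)}\sum_{y}(f_{x,s}-f_{y,s})$ as an average of differences of normalized regular cuspidal functions of equal dimension from tubes of the same degree, each of which is cuspidal by the corollary following Theorem \ref{noyau}; a linear combination of cuspidal functions being cuspidal then yields the claim.
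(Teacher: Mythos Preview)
Your proof is correct and follows essentially the same line as the paper's own argument: both reduce the statement to checking that the expression lies in $\ker L$ via Theorem~\ref{noyau}, and both conclude by observing that $L(f_{y,s})=\xi(s,q^{\deg(y)})$ depends only on the degree, so the average over the $N(d)$ points of degree $d$ cancels the leading term. You have simply spelled out more of the bookkeeping (homogeneity, the count $N(d)$) and appended the alternative via the corollary, but the core argument is identical.
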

\begin{proof}
Let $x\in\lvert\PP^1_{\F_q}\rvert\setminus D$ a closed point of degree $d$ and 
\[
f=f_{x,s}-\frac{1}{N(d)}\sum_{\substack{y\in\lvert\PP^1_{\F_q}\rvert\setminus D\\ \deg(y)=d}}f_{y,s}.
\]
Then $f$ is a function of dimension $s\deg(x)$. From Theorem \ref{noyau}, $f$ is cuspidal if and only if $L(f)=0$. But
\[
L(f)=\xi(s,q^{deg(x)})-\frac{1}{N(d)}\sum_{\substack{y\in\lvert\PP^1_{\F_q}\rvert\setminus D\\ \deg(y)=d}}\xi(s,q^{\deg(y)})=0.
\]
\end{proof}

To prove Conjecture \ref{conj2}, we first define an action of an infinite permutation group on the Hall algebra $\HH_{Q,\F_q}$.

Let $\mathfrak{S}$ be the group of degree preserving permutations of $\lvert\PP^1_{\F_q}\rvert\setminus D$. The group $\mathfrak{S}$ is isomorphic to
\[
 \prod_{e\geq 1}\mathfrak{S}_{N(e)}
\]
where for a positive integer $N$, $\mathfrak{S}_N$ is the symmetric group on $N$ letters. The action
\[
 \mathfrak{S}\rightarrow \Aut(\HH_{Q,\F_q})
\]
is defined as follows. For $M,N$ two representations, $\lambda$ a partition, $x\in\lvert\PP^1_{\F_q}\rvert\setminus D$ and $\sigma\in \mathfrak{S}$,
\begin{gather*}
 \sigma\cdot [M]=[M] \quad \text{ if $[M]$ is preprojective, preinjective or in a non-homogeneous tube}\\
 \sigma\cdot [I_{\lambda}(x)]=[I_{\lambda}(\sigma(x))]\\
 \sigma\cdot ([M]\oplus [N])=(\sigma\cdot [M])\oplus (\sigma\cdot[N])
\end{gather*}
where for notational reasons, we define here $[M]\oplus[N]=[M\oplus N]$.
It is easily seen that $\sigma$ acts as a graded linear isomorphism on $\HH_{Q,\F_q}$.

The following is the second important result of this paper.
\begin{theorem}
 The group $\mathfrak{S}$ acts by Hopf-algebra automorphisms on $\HH_{Q,\F_q}$.
\end{theorem}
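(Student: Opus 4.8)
The plan is to deduce the statement from the Sevenhant--Van den Bergh presentation (Theorem \ref{SVthm}), by exploiting three structural properties of $\sigma$: it is an isometry for Green's scalar product, it stabilizes the space of cuspidal functions in each degree, and it fixes the real cuspidal generators. Granting these, I will produce a Hopf-algebra automorphism as the unique algebra endomorphism extending the action of $\sigma$ on cuspidal generators, and then the remaining (and genuinely delicate) point will be to check that this abstract automorphism coincides with the concrete permutation $\sigma$.

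First I would establish the three inputs. That $\sigma$ is an isometry is immediate: $\sigma$ permutes the orthogonal basis $\{[M]\}$, and by Proposition \ref{automorphisms} the count $a_{I_{\lambda}(x)}=a_{\lambda}(q^{\deg x})$ depends only on $\lambda$ and $\deg(x)$, so $a_{\sigma\cdot M}=a_M$ and Green's product is preserved. For the second input I would use Proposition \ref{resten}: under $\HH_{Q,\F_q,\RC}\simeq\bigotimes_a\HH_{C_a^Q}$ the map $\sigma$ permutes the tensor factors attached to homogeneous tubes of equal degree, which are isomorphic as Hopf algebras, so $\sigma$ restricts to a Hopf-algebra automorphism of $(\HH_{Q,\F_q,\RC},m,\Delta_{\RC})$ and in particular stabilizes $\HH_{Q,\F_q,\RC}^{\cusp}$. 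Combining this with the fact that $L(f_{x,n})$ depends only on $n$ and $\deg(x)$ (Theorem \ref{noyau}) shows $\sigma$ commutes with $L$, hence stabilizes $\ker L=\bigcup_{r\geq 1}\HH_{Q,\F_q}^{\cusp}[r\delta]$. Since the only cuspidal dimensions are the $e_i$ and the $r\delta$, and $\HH_{Q,\F_q}^{\cusp}[e_i]=\C[S_i]$ with $S_i$ lying outside every homogeneous tube (so $\sigma[S_i]=[S_i]$), I conclude that $\sigma$ stabilizes $\HH_{Q,\F_q}^{\cusp}[\dd]$ for every $\dd$ and fixes each real generator $[S_i]$.

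Now I would run the Sevenhant--Van den Bergh machine. Fix a graded orthonormal basis $(f_j)_{j\in J}$ of $\HH_{Q,\F_q}^{\cusp}$ whose members of real-root dimension are exactly the $[S_i]$. As $\sigma$ is a degree-preserving isometry of the cuspidal space, $(\sigma(f_j))_j$ is again such a basis, with $\dim \sigma(f_j)=\dim f_j$ and $\sigma([S_i])=[S_i]$. The defining relations of Theorem \ref{SVthm} depend only on $a_{i,j}=(\dim f_i,\dim f_j)$, a function of dimensions alone; moreover the Serre relations occur only for the generators with $a_{i,i}=2$, which are precisely the $\sigma$-fixed $[S_i]$, and each such relation is \emph{linear} in its second generator. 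Since $\sigma$ fixes the $[S_i]$ and sends every imaginary generator to a combination of imaginary generators of the same dimension, the images $\sigma(f_j)$ satisfy the very same commutation and Serre relations. Hence there is a unique algebra automorphism $\sigma'$ of $\HH_{Q,\F_q}$ with $\sigma'(f_j)=\sigma(f_j)$; because each $\sigma(f_j)$ is primitive, $\Delta\sigma'$ and $(\sigma'\otimes\sigma')\Delta$ are algebra maps agreeing on the generators, so $\sigma'$ is in fact a bialgebra, hence Hopf-algebra, automorphism.

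It remains to identify $\sigma'$ with the given map $\sigma$. Using the triangular block decomposition coming from Proposition \ref{extensions}, every $[M]$ is, up to a power of $\nu$, an ordered product $[P]\prod_a[R_a][I]$ of its preprojective, tube and preinjective parts, and both $\sigma$ and $\sigma'$ are multiplicative across such orthogonal-block products; it therefore suffices to compare them on a single block. On preprojective and preinjective classes both maps are the identity, since these lie in the composition subalgebra generated by the $[S_i]$. The hard part will be the homogeneous-tube factor: the subtlety is that an individual normalized regular cuspidal $f_{a,n}$ is \emph{not} primitive in $\HH_{Q,\F_q}$ — only the differences $f_{a,n}-f_{b,n}$ of tubes of equal degree are cuspidal (the corollary to Theorem \ref{noyau}) — so $\sigma'$ is pinned down on these differences but not directly on each $f_{a,n}$. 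To close this gap I would invoke the fortuitous cancellation theorem: for a regular cuspidal $f$ the non-primitive part of $\Delta(f)$ is supported on $\IC\times\PC$, where $\sigma$ acts trivially, while $\sigma$ is already a Hopf-algebra automorphism of $(\HH_{Q,\F_q,\RC},\Delta_{\RC})$; comparing the two comultiplications forces $\sigma'(f_{a,n})=f_{\sigma(a),n}=\sigma(f_{a,n})$, whence $\sigma'=\sigma$. This is exactly the step that trades the direct Hall-number identities $F^{U,V}_{R}=F^{\sigma U,\,V}_{\sigma R}$ — which are of Conjecture \ref{conj2} type and are best seen as \emph{consequences} of the present theorem — for the abstract Sevenhant--Van den Bergh presentation.
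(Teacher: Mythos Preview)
Your first three paragraphs reproduce the paper's proof: verify that $\sigma$ is a graded isometry preserving each $\HH_{Q,\F_q}^{\cusp}[\dd]$ and fixing the real generators $[S_i]$, and then invoke Theorem~\ref{SVthm} to produce a Hopf-algebra automorphism $\sigma'$ with $\sigma'|_{\HH^{\cusp}}=\sigma|_{\HH^{\cusp}}$. The paper's proof stops there and simply declares ``$\sigma$ is an algebra morphism''. You correctly notice that one must still identify the abstract $\sigma'$ with the concrete basis-permutation $\sigma$, since it is the latter that is applied to $[I_\lambda(x)]$ in the proof of Conjecture~\ref{conj2}; this identification is a point the paper glosses over entirely.

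Your block reduction to $\HH_{\PC}$, $\HH_{\IC}$ and the individual tube subalgebras $\HH_{C_a}$ is valid. The assertion that $\sigma'=\id$ on $\HH_{\PC}$, $\HH_{\IC}$ and on each non-homogeneous tube is correct --- each of these subalgebras is generated by classes of indecomposables of real-root dimension, which lie in the composition subalgebra on which $\sigma'$ is the identity --- but this is a nontrivial structural fact that deserves justification or a reference. The genuine gap is in the homogeneous-tube step. The fortuitous cancellation theorem combined with $\sigma'=\id$ on $\HH_{\IC}\otimes\HH_{\PC}$ shows only that $\sigma'(f_{a,n})-f_{a,n}$ is \emph{cuspidal}; together with the known action on the cuspidal differences $f_{a,n}-f_{b,n}$ this gives $\sigma'(f_{a,n})=f_{\sigma(a),n}+\gamma$ for some $\gamma\in\HH_{Q,\F_q}^{\cusp}[n\deg(a)\delta]$ depending only on $(n,\deg a)$, not on $a$. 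Your phrase ``comparing the two comultiplications forces $\sigma'(f_{a,n})=f_{\sigma(a),n}$'' does not force $\gamma=0$. When $\deg(a)=1$ and $D\neq\emptyset$ one can indeed anchor $\gamma=0$ against a non-homogeneous tube (on which $\sigma'=\id$ has been established), but for $\deg(a)>1$, and for the Kronecker quiver in all degrees, there is no tube of that degree fixed a priori by $\sigma'$, and the argument as written is incomplete.
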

\begin{proof}
 It is a consequence of the following facts.
 \begin{enumerate}
 \item $\sigma$ acts as an isometry of $\HH_{Q,\F_q}$,
 \item The action of $\sigma$ leaves $\HH_{Q,\F_q,\RC}$ and $\HH_{Q,\F_q,\RC}^{\cusp}$ stable,
 \item $\sigma$ commutes with the linear form $L$. In particular, it preserves $\HH_{Q,\F_q}^{\cusp}[\dd]$ for any dimension $\dd$.
\end{enumerate}
Indeed, for $(1)$, we have to prove that for $M$ a representation of $Q$ over $\F_q$, the number of automorphisms of $M$ and $\sigma M$ are the same. We write $M\simeq I\oplus R\oplus P$ with $I$ preinjective, $R$ regular and $P$ preprojective. There is no morphisms from $I$ to $R$ or $P$ and no morphisms from $R$ to $P$. Therefore, an   endomorphism of $M$ is upper triangular in this direct sum decomposition. Since for a partition $\lambda$ and a closed point $x\in\lvert\PP^1_{\F_q}\rvert\setminus D$ the number of automorphisms of $I_{\lambda}(x)$, $a_{\lambda}(q^{\deg(x)})$, only depends on the degree of $x$, we only have to prove that for a partition $\lambda$, $x\in\lvert\PP^1_{\F_q}\rvert\setminus D$, a preprojective representation $P$ and a preinjective representation $I$, both numbers
\begin{gather*}
\dim\Hom(P,I_{\lambda}(x)),\\
\dim\Hom(I_{\lambda}(x),I)
\end{gather*}
only depend on the degree of $x$.
This is straightforward since using the Euler form, the first equals $\langle\dim P,|\lambda|\deg(x)\delta\rangle$ and the second equals $\langle|\lambda|\deg(x)\delta,\dim I\rangle$.

For $(2)$, by definition $\sigma$ let $\HH_{Q,\F_q,\RC}$ stable and the element $f_{x,n}$ of $\HH_{Q,\F_q,\RC}$ verifies $\sigma\cdot f_{x,n}=f_{\sigma(x),n}$, so $\HH_{Q,\F_q,\RC}^{\cusp}$ is stable under $\sigma$. For $(3)$, we notice that $L(f_{x,n})=\xi(n,q^{\deg(x)})=L(f_{\sigma(x),n})$.

Therefore, $\sigma$ sends a system of orthogonal cuspidal generators of $\HH_{Q,\F_q}$ to an other such system. Theorem \ref{SVthm} tells us that these systems of generators satisfy the same relations: $\sigma$ is an algebra morphism. Since $\sigma$ also preserves cuspidal functions, it is a Hopf algebra automorphism.
\end{proof}

\begin{cor}
 Conjecture \ref{conj2} holds.
\end{cor}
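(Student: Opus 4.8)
The plan is to deduce the corollary directly from the fact, just established, that every $\sigma\in\mathfrak{S}$ acts on $\HH_{Q,\F_q}$ as a Hopf algebra automorphism; in particular $\sigma$ is a morphism of coalgebras, so
\[
\Delta\circ\sigma=(\sigma\otimes\sigma)\circ\Delta.
\]
The strategy is to read off the coefficient of a single tensor $[P]\otimes[I]$ on both sides of this identity applied to the class $[I_{\lambda}(x)]$, exploiting that $\sigma$ fixes preprojectives and preinjectives.

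First I would recall the comultiplication formula
\[
\Delta([R])=\sum_{[U],[V]}\nu^{\langle U,V\rangle}F^{U,V}_R\,[U]\otimes[V],
\]
so that $\nu^{\langle P,I\rangle}F^{P,I}_{I_{\lambda}(x)}$ is precisely the coefficient of $[P]\otimes[I]$ in $\Delta([I_{\lambda}(x)])$. Next, given two closed points $x,y\in\abs{\PP^1_{\F_q}}\setminus D$ of the same degree, I would choose $\sigma\in\mathfrak{S}$ with $\sigma(x)=y$; such a $\sigma$ exists because $\mathfrak{S}$ is the full group of degree-preserving permutations of $\abs{\PP^1_{\F_q}}\setminus D$. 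Since $\sigma\cdot[I_{\lambda}(x)]=[I_{\lambda}(y)]$, the coalgebra identity evaluated on $[I_{\lambda}(x)]$ reads
\[
\Delta([I_{\lambda}(y)])=(\sigma\otimes\sigma)\Delta([I_{\lambda}(x)]).
\]

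The key step is then the coefficient comparison. On the left-hand side the coefficient of $[P]\otimes[I]$ is $\nu^{\langle P,I\rangle}F^{P,I}_{I_{\lambda}(y)}$. On the right-hand side, $\sigma$ permutes the monomial basis $\{[M]\}$ of $\HH_{Q,\F_q}$ and fixes both $[P]$ (preprojective) and $[I]$ (preinjective); hence the only basis tensor carried by $\sigma\otimes\sigma$ onto $[P]\otimes[I]$ is $[P]\otimes[I]$ itself, and its contribution is $\nu^{\langle P,I\rangle}F^{P,I}_{I_{\lambda}(x)}$. The $\nu$-powers agree, so cancelling them yields $F^{P,I}_{I_{\lambda}(x)}=F^{P,I}_{I_{\lambda}(y)}$, which is exactly the assertion of Conjecture \ref{conj2}.

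The only point requiring care — and the main, if modest, obstacle — is the coefficient-extraction step: one must check that $\sigma$ genuinely permutes the basis $\{[M]\}$ (so that the coefficients are well defined) and that the fixed classes $[P]$ and $[I]$ have unique $\sigma$-preimages, ruling out any other tensor $[U]\otimes[V]$ contributing to $[P]\otimes[I]$. Both facts follow from the definition of the action on direct sums together with the stability of the preprojective and preinjective classes, so no genuine computation is involved; and the fortuitous cancellation theorem is not needed here, as it served only to establish the converse implication, that Conjecture \ref{conj2} recovers Conjecture \ref{conj1}.
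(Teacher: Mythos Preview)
Your proof is correct and follows essentially the same route as the paper's: both use that every $\sigma\in\mathfrak{S}$ is a coalgebra automorphism fixing preprojective and preinjective classes, apply $\Delta\circ\sigma=(\sigma\otimes\sigma)\circ\Delta$ to $[I_{\lambda}(x)]$, and then read off the desired equality from the fact that the relevant tensor factors are $\sigma$-fixed. The only cosmetic difference is that the paper packages the coefficient extraction via the truncated comultiplication $\Delta_{\IC,\PC}$ rather than picking out a single basis tensor, but the argument is the same.
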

\begin{proof}
As usual, $\Delta_{I,P}$ denotes the truncation of the comultiplication: for $f\in\HH_{Q,\F_q}$, we keep only the terms of the form $[I']\otimes[P']$ in $\Delta(f)$. A reformulation of conjecture \ref{conj2} is
\[
 \Delta_{I,P}[I_{\lambda}(x)]=\Delta_{I,P}[I_{\lambda}(y)]
\]
for any partition $\lambda$ and closed points $x,y\in \lvert\PP^1_{\F_q}\rvert\setminus D$. For any $x,y\in\lvert\PP^{1}_{\F_q}\rvert\setminus D$, we choose $\sigma\in \mathfrak{S}$ such that $\sigma(x)=y$. We have
\[
 \Delta_{I,P}([I_{\lambda}(y)])=\Delta_{I,P}(\sigma\cdot[I_{\lambda}(x)])=(\sigma\otimes\sigma)\cdot\Delta_{I,P}([I_{\lambda}(x)])=\Delta_{I,P}([I_{\lambda}(x)])
\]
as $\sigma$ acts trivially on preinjective and preprojective representations.

\end{proof}

\section{Isotropic cuspidal dimensions of quivers}\label{8}
Let $Q=(I,\Omega)$ be an arbitrary quiver and $\F_q$ a finite field.

\subsection{The support of an isotropic cuspidal dimension of a quiver}\

Write $e_i=(0,\hdots,0,1,0,\hdots, 0)$ where all coordinates except the $i$-th are $0$. The following seems to be known and is not difficult to prove but we reproduce the proof for the sake of completeness.

\begin{proposition}\label{connected}
 Let $\dd$ be a cuspidal dimension of $Q$ over $\F_q$. Then $\dd$ has a connected support.
\end{proposition}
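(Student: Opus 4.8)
The plan is to argue by contradiction: assume $\dd$ has disconnected support and deduce that no nonzero cuspidal function of dimension $\dd$ can exist, contradicting cuspidality of $\dd$. Write $\supp(\dd)=S'\sqcup S''$ as a disjoint union of two nonempty vertex sets with no arrow of $Q$ running between them in either direction, and let $\dd',\dd''\in\N^I$ be the restrictions of $\dd$ to $S'$ and $S''$, so that $\dd=\dd'+\dd''$ with both summands nonzero. The first step is a decomposition observation: since no arrow connects $S'$ and $S''$, the representation data of any $M$ with $\dim M=\dd$ is block-diagonal for the partition $S'\sqcup S''$, so $M$ splits canonically as $M=M'\oplus M''$ with $\dim M'=\dd'$ and $\dim M''=\dd''$.

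Next I would record the relevant vanishings. Because $S'$ and $S''$ are disjoint and no arrow connects them, the explicit formula for the Euler form gives $\langle\dd',\dd''\rangle=\langle\dd'',\dd'\rangle=0$. Disjointness of supports also forces $\Hom(M',M'')=\Hom(M'',M')=0$, and combined with the heredity of $\Rep_Q(\F_q)$ and the identity $\langle\dd',\dd''\rangle=\homm(M',M'')-\ext^1(M',M'')$ this yields $\Ext^1(M',M'')=\Ext^1(M'',M')=0$ as well.

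Then I would compute the Hall product $[M']\star[M'']$. Since $\Ext^1(M',M'')=0$, the only middle term occurring is the split extension $M'\oplus M''=M$; and since $\Hom(M'',M')=0$, the only subrepresentation of $M$ isomorphic to $M''$ with quotient isomorphic to $M'$ is $M''$ itself, so the relevant structure constant is $1$. As $\langle\dd',\dd''\rangle=0$, the normalizing power of $\nu$ is trivial, and we obtain the exact equality $[M']\star[M'']=[M]$. Finally I invoke Lemma \ref{orthog}: a cuspidal function $f$ of dimension $\dd$ is orthogonal, for Green's scalar product, to the span of all products of two positive-dimensional homogeneous elements. Since $\dd',\dd''>0$, the element $[M]=[M']\star[M'']$ lies in that span, so $(f,[M])=0$; as $(f,[M])$ is a nonzero scalar multiple of $f([M])$ (namely $f([M])/a_M$), this gives $f([M])=0$. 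Letting $M$ range over all representations of dimension $\dd$ shows $f=0$, so $\dd$ is not a cuspidal dimension, a contradiction.

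I expect the only delicate point to be the Hall-number computation $[M']\star[M'']=[M]$, that is, simultaneously ruling out any non-split middle term and checking that the structure constant is exactly $1$; everything else follows directly from disjointness of the supports together with the heredity of $\Rep_Q(\F_q)$.
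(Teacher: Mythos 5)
Your proof is correct and follows essentially the same route as the paper's: decompose each $M$ of dimension $\dd$ as $M'\oplus M''$ along the two components of the support, observe that the vanishing of Hom and Ext (via the Euler form) gives $[M']\star[M'']=[M]$, and conclude from Lemma \ref{orthog} that $f([M])=0$ for every such $M$. The only difference is that you spell out the Hall structure constant computation in more detail than the paper does.
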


\begin{proof}
 Let $\dd\in\N^I$ be a cuspidal dimension and $f$ a nonzero cuspidal function of dimension $\dd$. Suppose $\supp\dd$ is not connected, therefore $P=\supp\dd=P_1\sqcup P_2$ such that there is no arrow between $P_1$ and $P_2$. For a representation $M$ of $Q$ over $\F_q$, we write $M_{P_j}$ for $j=1,2$ the representation of $Q$ which coincides with $M$ on $P_j$ and is zero at the other vertices. Since $P_1$ and $P_2$ are not connected, 
 \[\Ext^1(M_{P_1},M_{P_2})=\Ext^1(M_{P_2},M_{P_1})=0
 \]
and
\[\Hom(M_{P_1},M_{P_2})=\Hom(M_{P_2},M_{P_1})=0
 .\]
 Therefore, in $\HH_{Q,\F_q}$, 
 \[
 [M_{P_1}][M_{P_2}]=[M_{P_1}\oplus M_{P_2}]=[M_{P_2}][M_{P_1}].
 \]
 Now write
 \[
  f=\sum_{[M], \dim M=\dd}c_{[M]}[M]=\sum_{[M], \dim M=\dd}c_{[M]}[M_{P_1}\oplus M_{P_2}].
 \]

 Since from the characterization of cuspidal functions, $f$ is orthogonal to any nontrivial products,
 \[
  c_{[M]}=|\Aut(M)|(f,[M])=|\Aut(M)|(f,[M_1][M_2])=0,
 \]
 giving $f=0$, contradiction.

\end{proof}

The following Theorem \ref{isotropic} is a direct consequence of \cite[Proposition 5.7]{KacInfinite} and of the inequalities verified by cuspidal dimensions in case of quivers without loops. We reproduce the proof here for arbitrary quivers for the convenience of the reader.
\begin{theorem}\label{isotropic}
Let $\dd$ be an isotropic cuspidal dimension of Q over $\F_q$ (\emph{i.e.} $(\dd,\dd)=0$ where $(-,-)$ is the symmetrized Euler form of the quiver). Then the support $\supp \dd$ of $\dd$ is an affine quiver.
\end{theorem}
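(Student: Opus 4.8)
The plan is to pass to the support, dispose of the one-vertex case by hand, and in the remaining case combine two inputs: the sign constraint that a cuspidal dimension pairs nonpositively with every simple root, and Kac's classification of connected diagrams carrying a positive vector in the radical of the symmetrized form. First I would reduce to the full subquiver $Q_P$ on $P=\supp\dd$. By Proposition \ref{connected} the set $P$ is connected, and since $\dd$ vanishes off $P$ the value $(\dd,\dd)$ is computed entirely inside $Q_P$; moreover $\dd_i>0$ for every $i\in P$, and $\dd$ is still a cuspidal dimension of $Q_P$, because a representation of dimension $\dd$ is supported on $P$ and the $\Hom$, $\Ext^1$, hence the whole Hall structure between $P$-supported representations, is unchanged when arrows touching $I\setminus P$ (which meet zero spaces) are discarded. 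Thus it suffices to show $Q_P$ is affine. If $P=\{i\}$ is a single vertex, then $\dd=re_i$ and $0=(\dd,\dd)=r^2(e_i,e_i)=2r^2(1-\ell_i)$, where $\ell_i$ is the number of loops at $i$; hence $\ell_i=1$ and $Q_P$ is the Jordan quiver, which is affine. So from now on I assume $|P|\ge 2$, equivalently $\dd\neq re_i$ for all $i$ and $r$.

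The heart of the argument is the inequality
\[
(\dd,e_i)\le 0\qquad\text{for every }i\in P,
\]
which is what is meant by the inequalities verified by cuspidal dimensions. I would obtain it from the nonpositivity of the symmetrized Euler form between distinct cuspidal dimensions (the inequality $a_{i,j}\le 0$ for $i\neq j$ recalled before Theorem \ref{SVthm}): each simple object $S_i$ of $\Rep_Q(\F_q)$ gives a primitive class $[S_i]$, so $e_i$ is a cuspidal dimension; since $\dd\neq e_i$, a normalized cuspidal function of dimension $\dd$ and a normalized $[S_i]$ lie in distinct graded components, so are orthogonal and extend to a common orthonormal cuspidal basis, whence $(\dd,e_i)=(\dim f,\dim[S_i])\le 0$. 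In the loopless case this says exactly that $\dd$ lies in Kac's fundamental region, which is classical; the virtue of this formulation is that it is insensitive to loops and to the orientation.

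With the inequality in hand the conclusion is quick. Pairing $\dd$ with itself and expanding along the simple roots gives $0=(\dd,\dd)=\sum_{i\in P}\dd_i\,(\dd,e_i)$, a sum of nonpositive terms, so every term vanishes and $(\dd,e_i)=0$ for all $i\in P$; that is, $\dd$ lies in the radical of the symmetrized Euler form of $Q_P$. I would first rule out loops: if some $i\in P$ carried $\ell_i\ge 1$ loops then, as $P$ is connected with at least two vertices, $i$ has a neighbour in $P$, and writing $m_{ij}$ for the number of arrows between $i$ and $j$ we get $(\dd,e_i)=2(1-\ell_i)\dd_i-\sum_{j\neq i}m_{ij}\dd_j<0$, contradicting $(\dd,e_i)=0$. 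Hence $Q_P$ is loopless, its symmetrized Euler form is a genuine symmetric generalized Cartan matrix, and $\dd$ is a positive vector in its kernel; by Kac's Proposition 5.7 (\cite[Proposition 5.7]{KacInfinite}) a connected such diagram is of affine type, so $Q_P$ is affine, as claimed.

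The step I expect to be the genuine obstacle is the inequality $(\dd,e_i)\le 0$ in the presence of loops. For loopless quivers one can argue instead by reflection functors, placing $\dd$ in the fundamental region because any vertex with $(\dd,e_i)>0$ would admit a BGP-type reflection strictly decreasing $\dd$ while preserving cuspidality; but such reflections are unavailable at loop vertices, which is precisely why I would route the inequality through the orientation- and loop-agnostic nonpositivity $a_{i,j}\le 0$ of Sevenhant--Van den Bergh rather than through the Weyl group. Everything else, namely the reduction to the support, the single-vertex Jordan case, the radical computation and the exclusion of loops, is elementary bookkeeping with the symmetrized Euler form.
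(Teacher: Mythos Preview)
Your proof is correct and follows essentially the same route as the paper's: both obtain $(\dd,e_i)\le 0$ from the Sevenhant--Van den Bergh nonpositivity of off-diagonal entries, deduce $(\dd,e_i)=0$ on the support via $0=(\dd,\dd)=\sum_i\dd_i(\dd,e_i)$, use this to exclude loops (yielding the Jordan quiver in the one-vertex case), and conclude by Kac's classification of indecomposable generalized Cartan matrices with a positive radical vector. The only cosmetic difference is ordering: you split off the single-vertex case first and then exclude loops for $|P|\ge 2$, whereas the paper treats the loop case uniformly (recovering the Jordan quiver as the unique loop possibility) before invoking Kac; your explicit reduction to $Q_P$ is harmless but not actually needed, since the inequality $(\dd,e_i)\le 0$ already holds in $Q$.
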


\begin{proof}
From \cite[Proposition 3.2 1.(a)]{SevenhantVdB}, which can be extended to an arbitrary quiver with few modifications\footnote{Sevenhant and Van den Bergh restrict themselves to edge loop free quivers, but their proof can be slightly modified for any quiver.}, we have
\[
(\dd,e_i)\leq 0
\]
for any $i\in I$ and moreover $\supp \dd$ is connected by Proposition \ref{connected}. We set $P=\supp\dd$ and see $P$ as a full subquiver of $Q$. The condition $(\dd,\dd)=0$ then implies 
\begin{equation}\label{nul}
(\dd, e_i)=0
\end{equation}
for $i\in \supp \dd$: in fact, if $\dd=\sum_{i\in \supp\dd}d_ie_i$ with $d_i$ a positive integer, $(d,d)=\sum_{i\in\supp\dd}d_i(\dd,e_i)=0$ and each term of this sum is negative or zero, which implies the result.
\medskip

We show that if $P$ contains edge loops, $P$ is the Jordan quiver. Let $i\in \supp\dd$ be a vertex with $g$ loops. Then 
\[
(\dd,e_i)=2(1-g)d_i - \sum_{\alpha : i\rightarrow j}d_j-\sum_{\alpha : j\rightarrow i}d_j.
\] 
This quantity must be zero. The only possibility is $g=1$ and $P$ contains no vertices adjacent to $i$. Thanks to the connectedness of $P$, we deduce that $P$ is the Jordan quiver.
\medskip

Suppose now $P$ has no edge loop. Then the Cartan matrix $A_{P}$ of the subquiver $P$ is a generalized Cartan matrix as defined in \cite[Chapter 4]{KacInfinite}, that is for any $i,j$ vertices of $P$,
\[
 a_{i,i}=2 \quad\text{ and }\quad a_{i,j}=0\implies a_{j,i}=0.
\]
The matrix $A_{P}$ is indecomposable because $P$ is connected. Moreover, the vector $\dd$ has positive nonzero integer coefficients and verifies $A_{P}\dd=0$ because of equation \eqref{nul}, since by definition of the symmetrized Euler form of a quiver, $(\dd,e_i)={}^{t}e_iA\dd$. The classification of indecomposable generalized Cartan matrices given in \cite[Theorem 4.3]{KacInfinite} implies that $A_{P}$ is of affine type. From \cite[Lemma 4.5]{KacInfinite}, since $A_{P}$ is by definition symmetric, $A_{P}$ is positive semidefinite. Thanks to \cite[Theorem 8.6 2.]{SchifflerQuiver}, $\supp\dd$ is an affine quiver: this concludes the proof.
\end{proof}
\subsection{Consequences}

\subsubsection{}
Theorem \ref{isotropic} together with Theorem \ref{noyau} gives an explicit description of cuspidal isotropic functions of any quiver. The cuspidal functions of a quiver are of three different types. We have the real cuspidal functions: the $[S_i]$ for $i$ a real vertex of $Q$ and $S_i$ the simple representation of dimension $e_i$, the isotropic cuspidal functions, which are the homogeneous cuspidal functions of dimension $\dd$ with $(\dd,\dd)=0$ and finally the hyperbolic cuspidal functions, that is cuspidal functions of dimension $\dd$ such that $(\dd,\dd)<0$.

\subsubsection{A conjecture of Bozec and Schiffmann for isotropic dimensions}
In the paper, \cite{BozecCounting}, Bozec and Schiffmann give the formula:
\begin{equation}\label{teq}
 C_{Q,\dd}^{abs}(t)=t
\end{equation}
if $\dd$ is an isotropic dimension of an affine quiver. By Theorem \ref{isotropic}, we can replace the condition $(2)$ of Proposition \ref{characterization} by the condition $(2)'$: if $\dd$ is isotropic, then $C_{Q,\dd}^{abs}(t)=t$. We can also partially obtain Conjecture \ref{conjBS}.

\begin{cor}\label{fin}
Conjecture \ref{conjBS} holds for isotropic dimensions $\dd\in\N^I$.
\end{cor}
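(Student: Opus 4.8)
The plan is to combine Theorem~\ref{isotropic} with the explicit affine formula \eqref{teq} and the defining characterization of absolutely cuspidal polynomials in Proposition~\ref{characterization}. The first observation I would record is a support-locality statement: since $C_{Q,\dd}(t)$ records the dimension of cuspidal functions of dimension $\dd$, and every such function is supported on representations of dimension $\dd$ (which live on the full subquiver carried by $\supp\dd$), one has $C_{Q,\dd}=C_{P,\dd}$ for $P=Q|_{\supp\dd}$; through the plethystic relation of Proposition~\ref{characterization}(2) the same insensitivity then passes to $C^{abs}$. With this in hand I would write $\dd=m\dd_0$ with $\dd_0\in\N^I$ primitive, so that $\dd_0$ and all its multiples $r\dd_0$ remain isotropic.

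Next I would split into two cases according to whether $\supp\dd_0$ is an affine quiver. Suppose first it is not. Then by Theorem~\ref{isotropic} no multiple $r\dd_0$ can be a cuspidal dimension, since otherwise its support $\supp(r\dd_0)=\supp\dd_0$ would be affine; hence $C_{Q,r\dd_0}(t)=0$ for every $r\geq 1$. The left-hand side of the plethystic identity of Proposition~\ref{characterization}(2) is therefore $\Exp_z(0)=1$, and as $\Exp_{t,z}$ is a bijection on series without constant term, this forces $\sum_{r>0}C^{abs}_{Q,r\dd_0}(t)z^{r\dd_0}=0$, so $C^{abs}_{Q,\dd}(t)=0\in\N[t]$.

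In the remaining case $\supp\dd_0=P$ is affine. Here I would use that the symmetrized Euler form of an affine quiver is positive semidefinite with one-dimensional radical spanned by the indivisible imaginary root $\delta$ of $P$; the fully supported isotropic vector $\dd_0$ must then be a positive rational multiple of $\delta$, and since $\dd_0$ is indivisible while $\delta$ is the indivisible generator of that radical, $\dd_0=\delta$ and $\dd=m\delta$. Applying the affine formula \eqref{teq}, valid for every affine quiver (including the Jordan and cyclic cases by the remark following the Hua--Xiao proposition) to the subquiver $P$, and invoking support-locality, I would conclude $C^{abs}_{Q,\dd}(t)=C^{abs}_{P,m\delta}(t)=t\in\N[t]$.

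In all cases $C^{abs}_{Q,\dd}(t)\in\N[t]$, which is Conjecture~\ref{conjBS} for isotropic $\dd$. The step I expect to be the main obstacle is not the affine computation itself — formula \eqref{teq} settles that immediately — but the two reductions feeding into it: justifying rigorously that both $C_{Q,\dd}$ and its absolutely cuspidal refinement are unchanged when passing to $P=Q|_{\supp\dd}$, and using the invertibility of the plethystic exponential to transfer the vanishing of the ordinary cuspidal polynomials to the vanishing of the absolutely cuspidal ones. Once those are cleanly established, Theorem~\ref{isotropic} dictates which of the two cases occurs and the result follows.
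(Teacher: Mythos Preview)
Your argument is correct and follows the same route the paper sketches: Theorem~\ref{isotropic} forces an isotropic cuspidal dimension to have affine support, and then formula~\eqref{teq} gives $C^{abs}_{Q,\dd}(t)=t$ there, while the plethystic characterization in Proposition~\ref{characterization} forces $C^{abs}_{Q,\dd}(t)=0$ in the non-affine-support case. If anything you are more careful than the paper: your explicit treatment of the non-affine case (yielding $0$, not $t$) and your support-locality reduction $C_{Q,\dd}=C_{P,\dd}$, $C^{abs}_{Q,\dd}=C^{abs}_{P,\dd}$ make precise what the paper's one-line replacement of condition~(2) by ``$(2)'$: $C^{abs}_{Q,\dd}(t)=t$ for isotropic $\dd$'' leaves implicit (and, read literally, slightly overstates).
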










\bibliographystyle{amsplain}
\bibliography{bibliographie}
\end{document}